\DeclareMathOperator\C{\mathbb C}
\DeclareMathOperator\Z{\mathbb Z}
\DeclareMathOperator\Q{\mathbb Q}
\newcommand{\ed}{\qed\vspace{3mm}}
\newtheorem{theorem}{Theorem}[section]
\newtheorem{lemma}[theorem]{Lemma}
\newtheorem{cor}[theorem]{Corollary}
\newtheorem{prop}[theorem]{Proposition}
\theoremstyle{definition}
\newtheorem{definition}[theorem]{Definition}
\newtheorem{example}[theorem]{Example}
\theoremstyle{remark}
\newcommand{\dontprint}[1]\relax
\newcommand{\sspan}{\operatorname{span}}
\newcommand{\coker}{\operatorname{coker}}
\newcommand{\Spf}{\operatorname{Spf}}
\newcommand{\De}{\Delta}
\newcommand{\La}{\Lambda}
\newcommand{\Ga}{\Gamma}
\newcommand{\Aut}{\operatorname{Aut}}
\newcommand{\hra}{\hookrightarrow}
\newcommand{\A}{{\mathbb A}}
\renewcommand{\AA}{{\mathcal A}}
\newcommand{\HH}{{\mathcal H}}
\newcommand{\wt}{\widetilde}
\newcommand{\ot}{\otimes}
\newcommand{\Hom}{\operatorname{Hom}}
\newcommand{\Om}{\Omega}
\newcommand{\TT}{{\mathcal T}}
\newcommand{\NN}{{\mathcal N}}
\newcommand{\CC}{{\mathcal C}}
\newcommand{\EE}{{\mathcal E}}
\newcommand{\FF}{{\mathcal F}}
\newcommand{\KK}{{\mathcal K}}
\newcommand{\GG}{{\mathcal G}}
\newcommand{\LL}{{\mathcal L}}
\newcommand{\MM}{{\mathcal M}}
\newcommand{\OO}{{\mathcal O}}
\newcommand{\PP}{{\mathcal P}}
\newcommand{\WW}{{\mathcal W}}
\newcommand{\si}{\sigma}
\newcommand{\de}{\delta}
\newcommand{\sub}{\subset}
\newcommand{\Spec}{\operatorname{Spec}}
\newcommand{\Res}{\operatorname{Res}}
\newcommand{\ov}{\overline}
\newcommand{\im}{\operatorname{im}}
\newcommand{\om}{\omega}
\newcommand{\la}{\lambda}
\renewcommand{\a}{\alpha}
\renewcommand{\b}{\beta}
\newcommand{\irr}{\operatorname{irr}}
\newcommand{\G}{{\mathbb G}}
\renewcommand{\th}{\theta}
\newcommand{\glue}{{\operatorname{glue}}}
\newcommand{\eps}{\epsilon}
\title{Extended clutching construction for the moduli of stable curves}
\author{Alexander Polishchuk}
\thanks{Partially supported by the NSF grants
DMS-2001224 and DMS-2349388, by the Simons Travel grant MPS-TSM-00002745, and within the framework of the HSE University Basic Research Program.}
\dedicatory{To David Kazhdan, with admiration}
\address{
    Department of Mathematics, 
    University of Oregon, 
    Eugene, OR 97403, USA; National Research University Higher School of Economics, Moscow, Russia}
\numberwithin{equation}{section}
\begin{document}

\begin{abstract}
We give a description of the formal neighborhoods of the components of the boundary divisor in the Deligne-Mumford moduli stack $\ov{\MM}_g$ of stable curves
in terms of the {\it extended clutching construction} that we define. This construction can be viewed as a formal version of the analytic plumbing construction. The advantage of our formal construction
is that we can control the effect of changing formal parameters at the marked points that are being glued. As an application, we prove that the infinitesimal neighborhood
of the boundary component $\De_{1,1}$ in $\ov{\MM}_2$ is canonically isomorphic to the infinitesimal neighborhood of the zero section in the normal bundle.
As another application, we show how to study the period map near the boundary components $\De_{g_1,g_2}$
in terms of the coordinates coming from our extended clutching construction. 
\end{abstract}

\maketitle

\section{Introduction}

Let $\ov{\MM}_g$ be the moduli stack of stable curves of genus $g\ge 2$. Consider the component $\De_{g_1,g_2}\sub\ov{\MM}_g$
of the boundary divisor corresponding to reducible curves with components of genus $g_1$ and $g_2$, where $g=g_1+g_2$.
We are interested in a description of the $n$th order thickening $\De_{g_1,g_2}^{(n)}$ of $\De_{g_1,g_2}$ in $\ov{\MM}_g$ for any $n\ge 1$.

Recall that there is a natural ``clutching" surjective finite morphism
$$\xi_{g_1,g_2}:\ov{\MM}_{g_1,1}\times \ov{\MM}_{g_2,1}\to \De_{g_1,g_2}$$
which for $g_1\neq g_2$ is generically an isomorphism (see \cite[ch.\ XII, Prop.\ 10.11]{ACG2}).

For every $n\ge 1$, let us set $S_n=\Spec(\Z[q]/(q^{n+1}))$.
Let $\ov{\MM}^{(\infty)}_{g,1}\to \ov{\MM}_{g,1}$ denote the moduli space of stable curves of genus $g$ with one marked point together with a formal
parameter at the marked point.

We show that the clutching morphism extends naturally to a collection of compatible morphisms
$$\xi_{g_1,g_2}^{(n)}:\ov{\MM}^{(\infty)}_{g_1,1}\times \ov{\MM}^{(\infty)}_{g_2,1}\times S_n\to \De_{g_1,g_2}^{(n)},$$
for $n\ge 0$.
The corresponding family of stable curves is defined by gluing the constant deformations of the complements
to the marked points with the standard deformation of the node $x_1x_2=q$.
This can be viewed as an algebraic analog of the {\it plumbing construction}, well known in the Teichm\"uller space approach to the moduli of curves
(see \cite{HK}).

The price one pays for getting these extended morphisms is the need to fix formal parameters to be glued.
However, we show that these choices can be controlled in a way compatible with the morphisms $\xi_{g_1,g_2}^{(n)}$.
The setup for this is the well known action of the Lie algebra of vector fields on the moduli spaces $\ov{\MM}^{(\infty)}_{g,1}$,
or more precisely of the ind-group $\GG$ whose $R$-points are continuous automorphisms of $R(\!(t)\!)$ (see \cite{Kon}, \cite[Sec.\ 17.3]{BZF}).



We construct a group scheme 
$\GG_{\glue,n}$ over $S_n$ together with an embedding 
$$\GG_{\glue,n}\hra (\GG\times \GG)_{S_n},$$
so that we can consider the induced action of $\GG_{\glue,n}$ on 
$$\ov{\MM}^{(\infty)}_{g_1,g_2,n}:=\ov{\MM}^{(\infty)}_{g_1,1}\times \ov{\MM}^{(\infty)}_{g_2,1}\times S_n.$$
Our main result (see Theorem \ref{inf-nbhd-thm} below) states that the extended clutching morphism $\xi_{g_1,g_2}^{(n)}$
factors through the quotient by $\GG_{\glue,n}$ and that the map from this quotient to the $n$th neighborhood of $\De_{g_1,g_2}$
is generically a $\G_m$-torsor for $g_1\neq g_2$ (there is an extra double covering in the case $g_1=g_2$).

The same construction works for the component $\De_0\sub \ov{\MM}_g$ of the boundary divisor corresponding to curves with a non-separating node.
Namely, the clutching morphism
$$\xi_0:\ov{\MM}_{g-1,2}\to \De_0$$
extends to a collection of morphisms
$$\xi_0^{(n)}:\ov{\MM}^{(\infty)}_{g-1,2}\times S_n\to \De_0^{(n)},$$
where we consider the moduli space with choices of parameters at both marked points.
We still have the action of $\GG_{\glue,n}$ on $\ov{\MM}^{(\infty)}_{g-1,2}$, so that $\xi_0^{(n)}$ factors through the quotient.


To formulate the statement more precisely let us introduce some notation.
Let $\ov{\MM}^{\irr,(\infty)}_{g_1,g_2,n}\sub \ov{\MM}^{(\infty)}_{g_1,g_2,n}$ be the open locus corresponding to pairs of irreducible curves, and let
$U\De_{g_1,g_2}^{(n)}\sub \De_{g_1,g_2}^{(n)}$ be the corresponding open locus in $\De_{g_1,g_2}^{(n)}$ corresponding to curves with only one separating node.
In the case $g_1=g_2=g/2$ we consider the \'etale double coverings
$$\EE_{g/2,g/2}\to U\De_{g/2,g/2}, \ \ \EE_{g/2,g/2}^{(n)}\to U\De_{g/2,g/2}^{(n)}$$
corresponding to a choice of one of the components of the stable curve with a separating node.
Similarly, we consider the open loci $\MM^{(\infty)}_{g-1,2}\sub \ov{\MM}^{(\infty)}_{g-1,2}$ of smooth curves,
 $U\De_0^{(n)}\sub \De_0^{(n)}$ of curves with only one node, and
the \'etale double coverings 
$$\EE_0\to U\De_0, \ \ \EE_0^{(n)}\to U\De_0^{(n)}$$
corresponding to ordering the preimages of the node in the normalization.
For $g_1\neq g_2$ we set $\EE_{g_1,g_2}^{(n)}=U\De_{g_1,g_2}^{(n)}$.
Note that in the terminology of \cite{ACG2} these stacks correspond to choices of a $\Ga$-structure, where $\Ga$ is the graph with two vertices marked by $g_1$ and $g_2$
and an edge between them, in the case of $\EE_{g_1,g_2}$, and $\Ga$ is the graph with the single vertex and a loop in the case of $\EE_0$.

\begin{theorem}\label{inf-nbhd-thm}
The morphism $\xi_{g_1,g_2}^{(n)}$ (resp., $\xi_0^{(n)}$)
factors through a morphism
$$[\ov{\MM}^{(\infty)}_{g_1,g_2,n}/\GG_{\glue,n}]\to \De_{g_1,g_2}^{(n)}$$
(resp., 
$[\ov{\MM}^{(\infty)}_{g-1,2}/\GG_{\glue,n}]\to \De_0^{(n)}$).
The restriction of this morphism,
$$[\ov{\MM}^{\irr,(\infty)}_{g_1,g_2,n}/\GG_{\glue,n}]\to U\De_{g_1,g_2}^{(n)}$$
(resp., $[\MM^{(\infty)}_{g-1,2}/\GG_{\glue,n}]\to U\De_0^{(n)}$)
is canonically identified with the $\G_m$-torsor over $\EE_{g_1,g_2}^{(n)}$ corresponding to the pull-back of 
$\OO(\De)$ on $\ov{\MM}_g$.
\end{theorem}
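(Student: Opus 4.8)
The plan is to treat the factorization and the smooth-locus assertion separately, in both cases reducing everything to the explicit description of the family underlying the extended clutching morphism.

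\textbf{Factorization through the quotient.} First I would record, once and for all, the family $\pi\colon\CC\to\ov\MM^{(\infty)}_{g_1,g_2,n}$ classified by $\xi^{(n)}_{g_1,g_2}$: over a point $(C_1,p_1,t_1,C_2,p_2,t_2,q)$ it is obtained by deleting the formal discs around $p_1,p_2$ from the constant families $C_i\times S_n$ and regluing the two punctured constant families to the model neighbourhood $\Spec\Z[x_1,x_2][q]/(x_1x_2-q,q^{n+1})$ via $x_i\leftrightarrow t_i$. By the very construction of $\GG_{\glue,n}$, an $R$-point of $\GG_{\glue,n}$ is precisely a pair of changes of formal parameters $(t_1,t_2)\mapsto(\phi_1(t_1),\phi_2(t_2))$ for which the two resulting model neighbourhoods over $S_n$ are identified over $S_n$ compatibly with the two regluings; such a datum therefore produces a canonical, functorial isomorphism between the curve glued from $(C_1,t_1,C_2,t_2,q)$ and the one glued from $(C_1,\phi_1(t_1),C_2,\phi_2(t_2),q)$. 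Hence $\xi^{(n)}_{g_1,g_2}$ is $\GG_{\glue,n}$-equivariant for the trivial action on $\De^{(n)}_{g_1,g_2}$, and since $\GG_{\glue,n}$ is a flat affine (pro-algebraic) group scheme over $S_n$, the quotient $\ov\MM^{(\infty)}_{g_1,g_2,n}/\GG_{\glue,n}$ is an algebraic stack and descent yields the asserted morphism. The same argument, with $\ov\MM^{(\infty)}_{g-1,2}$ replacing the product and the model neighbourhood glued to a single constant family along its two formal discs, handles $\xi^{(n)}_0$.

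\textbf{The smooth locus is a $\G_m$-torsor.} On $\MM^{(\infty)}_{g_1,g_2,n}$ the two factors label the two components of the clutched curve (and for $g_1=g_2$ a choice of component), while on $\MM^{(\infty)}_{g-1,2}$ the ordering of the marked points orders the two branches of the node; this gives a canonical lift of the factored morphism to $\EE^{(n)}_{g_1,g_2}$, resp.\ $\EE^{(n)}_0$, of which the map to $U\De^{(n)}$ is the covering. The fibre of $\xi^{(n)}_{g_1,g_2}$ over a point of $U\De^{(n)}$ is, after pinning the two smooth components and the node, the space of pairs of formal parameters at $p_1,p_2$, a torsor under $\GG\times\GG$; the subgroup of $\GG\times\GG$ preserving the clutched family over $S_n$ is the group $H_n\subset(\GG\times\GG)_{S_n}$ of pairs with $\phi_1(x_1)\phi_2(x_2)=q$ on $\{x_1x_2=q\}$, and $H_n=\GG_{\glue,n}\cdot\G_m$ with $\GG_{\glue,n}\cap\G_m=1$, the $\G_m$ being the corescaling $(t_1,t_2)\mapsto(\mu t_1,\mu^{-1}t_2)$ — this is among the structural properties of $\GG_{\glue,n}$ proved in its construction. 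Therefore $\MM^{(\infty)}_{g_1,g_2,n}/H_n\to\EE^{(n)}_{g_1,g_2}$ is an isomorphism — here one uses that the standard smoothing of a node is versal, so that the $q$-direction of $S_n$ realizes the normal thickening of $\De$, together with the \'etaleness of the ordinary clutching at $q=0$ (cf.\ \cite{ACG2}) — and $\MM^{(\infty)}_{g_1,g_2,n}/\GG_{\glue,n}\to\EE^{(n)}_{g_1,g_2}$ is a torsor under $H_n/\GG_{\glue,n}\cong\G_m$; the case of $\De_0$ is identical.

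\textbf{Identification with $\OO(\De)$.} It remains to identify the line bundle associated to this torsor with the pull-back of $\OO(\De)$. I would use the canonical section $1_\De\in H^0(\ov\MM_g,\OO(\De))$ cutting out $\De$: by versality its pull-back along $\xi^{(n)}_{g_1,g_2}$ is a unit multiple of $q$, so $\xi^{(n),*}\OO(\De)$ descends to the associated bundle $L$ of the torsor with $\xi^{(n),*}1_\De$ descending to the tautological section "$q$". Equivalently, on $\De$ one has $\OO(\De)|_\De\cong T_{p_1}C_1\otimes T_{p_2}C_2$ by the standard computation of the first-order smoothing $T^1$ of a node, and the torsor $\MM^{(\infty)}_{g_1,g_2,n}/\GG_{\glue,n}$ attaches to a pair of formal parameters, modulo $\GG_{\glue,n}$, exactly the trivialization of this line making the model deformation $x_1x_2=q$ correspond to the smoothing parameter; this is a $\G_m$-equivariant isomorphism of torsors over $\EE^{(n)}_{g_1,g_2}$. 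In the cases $g_1=g_2$ and $\De_0$ both sides carry the same \'etale double cover (choice of component, resp.\ ordering of branches), so the identification descends.

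\textbf{Main obstacle.} The hardest step is the second one: showing $H_n=\GG_{\glue,n}\cdot\G_m$ and that the $n$-th order clutching is an isomorphism onto $\EE^{(n)}_{g_1,g_2}$ after quotienting, so that the quotient is genuinely a $\G_m$-torsor and nothing larger. This uses the fine structure of $\GG_{\glue,n}$ — in particular that, modulo corescaling, the second change of parameters is determined by the first order by order in $q$ — rather than its bare defining property, and a versality statement for the standard smoothing in families. A secondary point requiring care is the normalization in the third step: fixing conventions (formal parameter versus its inverse, the direction of the normal bundle) so that the bundle comes out as $\OO(\De)$ and not $\OO(-\De)$, and checking compatibility of the whole identification with the passage from $\De^{(n)}$ to $\De^{(n+1)}$.
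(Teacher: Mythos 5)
Your overall architecture (factor through the quotient by showing the glued curve only depends on the parameters up to $\GG_{\glue,n}$, then exhibit the smooth locus as a $\G_m$-torsor and match it with $\OO(\De)$) parallels the paper, and your factorization paragraph is essentially the content of Lemma \ref{glue-action-lem}. However, the second step contains a genuine error. The ``corescaling'' $(t_1,t_2)\mapsto(\mu t_1,\mu^{-1}t_2)$ is itself an element of $\GG_{\glue,n}$: it is the automorphism of $R[\![t_1,t_2]\!]/(t_1t_2-q)$ attached to the unit $u=\mu\in R^*$, i.e.\ the middle factor $R^*$ in the decomposition of Proposition \ref{group-scheme-Lie-prop}(ii). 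Consequently your claimed decomposition $H_n=\GG_{\glue,n}\cdot\G_m$ with $\GG_{\glue,n}\cap\G_m=1$ fails: with your definition of $H_n$ (pairs with $\phi_1(x_1)\phi_2(x_2)=q$, i.e.\ preserving the model with the \emph{same} $q$) one gets essentially $\GG_{\glue,n}$ itself, and the corescaling contributes nothing to $H_n/\GG_{\glue,n}$. The residual $\G_m$ that actually makes $\MM^{(\infty)}_{g_1,g_2,n}/\GG_{\glue,n}\to U\De^{(n)}_{g_1,g_2}$ a torsor is of a different nature: it rescales the gluing parameter itself, $(x_1,x_2,q)\mapsto(\la x_1,x_2,\la q)$, reflecting that an $R$-point of $U\De^{(n)}$ comes with no preferred choice of $q$. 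This is precisely what the paper's Proposition \ref{Aut-AR-prop} supplies: any isomorphism between the node models for $q'$ and $q$ preserving the two branches is the rescaling $x_1\mapsto\la x_1$ with $q'=\la q$ (and $\la$ unique) followed by an element of $\GG_{\glue,n}$; combined with the equivalence of Proposition \ref{gluing-data-equivalence-prop}, this is what shows the fibres of the quotient over $U\De^{(n)}$ (resp.\ $\EE^{(n)}$) are $\G_m$-torsors. Without such a statement your step produces no torsor at all (for fixed $q$ the quotient of the fibre by $H_n$ is a point), and the $\G_m$-weight fed into your third step is the wrong one.

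Relatedly, in the third step the identification with $\OO(\De)$ must be made over the $n$-th thickening, not just along $\De$ where $\OO(\De)|_\De\simeq T_{p_1}C_1\ot T_{p_2}C_2$. The paper achieves this by a direct Cech computation of the determinant line bundle of the glued family (Proposition \ref{det-line-bundle-prop}), producing a trivialization $\tau$ with $\th=\tau(q)$, and then checking (Lemma \ref{glue-action-lem}) that $\tau$ is $\GG_{\glue,n}$-invariant and transforms with weight $-1$ under $(x_1,x_2,q)\mapsto(\la x_1,x_2,\la q)$; the induced map to the torsor of trivializations of the pull-back of $\OO(\De)$ is then a morphism of $\G_m$-torsors, hence an isomorphism. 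Your appeal to versality (``the pull-back of $1_\De$ is a unit multiple of $q$'') is enough for the factorization through $\De^{(n)}$, but for the torsor identification you need the exact equivariance just described, with respect to the correct residual $\G_m$.
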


As an application, in the case $g_1=g_2=1$ we can identify the $n$th infinitesimal neighborhood of the boundary divisor with the one in the normal line
bundle.
In the case $g_1=1$, $g_2>1$, we get a presentation of the $1$st infinitesimal neighborhood as a quotient by $\G_a\rtimes \G_m$.

\begin{theorem}\label{g1-thm} Let us work over $\Spec \Q$.

\noindent
(i) 
Let us denote by $\NN_{\EE}$ the pull-back of the normal bundle to $\EE_{1,1}\to \De_{1,1}$ 
(resp., $\EE_0\to U\De_0$). 
Then there is a natural isomorphism of $\EE_{1,1}^{(n)}$ (resp., $\EE_0^{(n)}$)
with the $n$th infinitesimal neighborhood of the zero section in the line bundle $\NN_{\EE}$ over $\EE_{1,1}$ (resp., over $\EE_0$).

\noindent
(ii) For any $g\ge 3$, there is a natural identification 
$$U\De_{1,g-1}^{(1)}\simeq [(\ov{\MM}_{1,1}^{(1)}\times \ov{\MM}_{g-1,1}^{\irr,(2)}\times \Spec(\Q[q]/(q^2)))/(\G_a\rtimes \G_m^2)],$$
where $\ov{\MM}_{g-1,1}^{\irr,(2)}\sub \ov{\MM}_{g-1,1}^{(2)}$ is the locus of irreducible curves. Here
we use natural actions of $\G_a$ on $\ov{\MM}_{1,1}^{(1)}\times \Spec(\Q[q]/(q^2)))$ and on $\MM_{g-1,1}^{(2)}$
by infinitesimal translations and by the changes of the formal parameter, and the natural rescalings of parameters by $\G_m^2$.
\end{theorem}



The extended clutching morphisms provide convenient ``gluing" coordinates in the formal neighborhood of the components of the boundary divisor in $\ov{\MM}_g$. To illustrate this
we show how to compute the expansion in powers of $q$ (the gluing coordinate), defined on the formal neighborhood of $\De_{g_1,g_2}$,
of the morphism
$$\pi_*\om_{C/\MM_g}\to R^1\pi_*\C_{C/\MM_g}$$
used to define the period map (see Section \ref{periods-sec}). Here $\pi:C\to \MM_g$ is the universal curve. In particular, we get explicit formulas for the expansion modulo $q^{g_1+g_2+2}$, and in the case $g_1=g_2=1$,
a recursive procedure for computing the complete $q$-expansion.

The results of this paper admit counterparts for the moduli spaces of stable supercurves that will be treated elsewhere.
In the super case the ``gluing coordinates" near the components of the boundary divisors are useful for studying the polar behavior of the superanalog of
the Mumford's isomorphism and of the superstring supermeasure.

The paper is organized as follows. In Sec.\ \ref{gluing-group-sec} we define and study the group scheme $\GG_{\glue,n}$ which acts on deformations
of the node singularity. In Sec.\ \ref{ext-clutch-sec} we define the extended clutching construction. In Sec.\ \ref{formal-nbhd-sec} we prove Theorems \ref{inf-nbhd-thm}
and \ref{g1-thm}. Finally, in Sec.\ \ref{periods-sec} we study the period map near the boundary divisor $\De_{g_1,g_2}$ using the coordinates given by the extended clutching construction.

\medskip

\noindent
{\it Acknowledgment}. I am grateful to Giovanni Felder and David Kazhdan for discussions of the analogous picture for supercurves, which led me to consider the classical case.

\section{Changes of coordinates on a node deformation}\label{gluing-group-sec}

\subsection{The gluing group scheme}

For any commutative ring $R$ 
let us denote by $\Aut R[\![t]\!]$ the group of continuous automorphisms of $\Aut R[\![t]\!]$, identical on $R$.
Similarly we denote by $\Aut R(\!(t)\!)$ the group of continuous automorphisms of $R(\!(t)\!)$,
identical on $R$. 
The Lie algebra of this group functor is $\WW$, the Witt algebra over $\Z$. 
It has a topological basis $L_n=z^{n+1}\frac{d}{dz}$, $n\in \Z$, and the bracket is given by
$$[L_m,L_n]=(m-n)L_{m+n}$$
(so an element of $\WW$ is an infinite series $\sum_{n\ge -N}c_nL_n$).

Let us consider the induced $\Z(\!(q)\!)$-linear bracket on $\WW\ot\Z(\!(q)\!)$. 
We are interested in a completion of a certain $\Z[\![q]\!]$-lattice in $\WW\ot\Z(\!(q)\!)$.

\begin{definition} Let us consider the $\Z[\![q]\!]$-submodule $\WW^0\sub \WW\ot\Z(\!(q)\!)$ with the basis
$(M_n)_{n\in\Z}$, where
$$M_n=L_n \text{ for } n\ge 0, \ \  M_{-n}=q^nL_{-n} \text{ for } n>0.$$
We also set $\WW^0_n:=\WW^0/q^{n+1}\WW^0$. Now let $\widehat{\WW}^0$ (resp., $\widehat{\WW}^0_n$) denote the completion of $\WW^0$ (resp., $\WW^0_n$)
with respect to the filtration by $\Z[\![q]\!]$-submodules generated by $M_n$ with $|n|>N$, where $N=1,2,\ldots$.
\end{definition}

In other words, in $\widehat{\WW}^0$ we allow infinite sums in both directions, $\sum_{i\in \Z} c_iM_i$. Using the relation
$$[M_i,M_{-j}]=q^{\min(i,j)}(i+j)M_{i-j}, \text{ for } i\ge 0,j\ge 0,$$
it is easy to see that the Lie bracket on $\WW^0$ induces a well defined Lie bracket on $\widehat{\WW}^0$.
We are mostly interested in the induced Lie algebra structure on $\widehat{\WW}^0_n$, linear over $A_n:=\Z[q]/(q^{n+1})$.

Note that we have a natural involution preserving the bracket, 
\begin{equation}\label{kappa-Lie-involution}
\kappa:\widehat{\WW}^0\to \widehat{\WW}^0: \kappa(M_n)=-M_{-n} \text{ for } n\in\Z.
\end{equation}

Also, the Lie algebra $\widehat{\WW}^0_n$ has a natural equivariant structure with respect to rescaling of $q$.
In other words, if we think of $\widehat{\WW}^0_n$ as a sheaf of Lie algebras over $\A^1_q$ then
it has a natural $\G_m$-equivariant structure. Namely, for any $A_n$-algebra $R$ and any unit $\la\in R^*$,
we denote by $R(\la q)$ the same ring $R$ with the new $A_n$-algebra structure that replaces $q\in R$ by $\la q$.
Then we have an action of $\la$,
$$\widehat{\WW}^0\ot_{A_n} R\rTo{\a_\la} \widehat{\WW}^0\ot_{A_n} R(\la q)$$
such that $\a_\la(M_n)=\la^nM_n$ for $n\ge 0$ and $\a_\la(M_n)=M_n$ for $n<0$.

Given a $\Z[q]/(q^{n+1})$-algebra $R$, we are going to show that
the natural Lie action of $\widehat{\WW}^0_n$ on $R(\!(t)\!)$ (where $M_i$ with $i\le -n-1$ acts trivially) integrates to an action of a certain
group scheme.

For a $\Z[q]/(q^{n+1})$-algebra $R$ let us set
$$A(R):=R[\![t_1,t_2]\!]/(t_1t_2-q).$$

\begin{definition} We define the group
$\GG_{\glue,n}(R)$ as the group of continuous automorphisms $\a$ of $A(R)$, identical on $R$, such that 
$$\a(t_1)=t_1\cdot u, \ \ \a(t_2)=t_2\cdot u^{-1},$$
for some unit $u$ in $A(R)$.
\end{definition}

The fact that the condition defining $\GG_{\glue,n}(R)$ is closed under product is easy to check:
if 
$$\b(t_1)=t_1\cdot v, \ \ \b(t_2)=t_2\cdot v^{-1},$$
then
$$(\a\circ\b)(t_1)=t_1\cdot (u\cdot\a(v)), \ \ (\a\circ\b)(t_2)=t_2\cdot (u^{-1}\cdot\a(v)^{-1}),$$
and we observe that $u\cdot \a(v)$ is again a unit in $A(R)$.

We have a natural involution 
$$\kappa:\GG_{\glue,n}(R)\to \GG_{\glue,n}(R): \a\mapsto \si \a \si^{-1},$$
where $\si$ swaps $t_1$ and $t_2$.

In the case $n=0$, i.e., $q=0$, the group $\GG_{\glue,0}(R)$ is generated by two commuting subgroups,
$\Aut R[\![t_1]\!]$ and $\Aut R[\![t_2]\!]$ (given by the condition that $u$ depends only on $t_1$ or only on $t_2$),
which intersect by $R^*$. In fact, it is easy to see that there is an isomorphism of groups
$$\GG_{\glue,0}(R)\simeq \Aut_1 R[\![t_1]\!]\times R^*\times \Aut_1 R[\![t_2]\!],$$
where $\Aut_1 R[\![t]\!]\sub \Aut R[\![t]\!]$ denotes the subgroup of automorphisms $\a$ such that $\a(t)\equiv t\mod (t^2)$.

We have a natural homomorphism of $R$-algebras
\begin{equation}\label{iota-def}
\iota:A(R)\to R(\!(t_1)\!)\oplus R(\!(t_2)\!):t_1\mapsto (t_1,q/t_2), \ t_2\mapsto (q/t_1,t_2).
\end{equation}
Note that this is well-defined since $q^{n+1}=0$ in $R$. 

Furthermore, it is easy to see that there is a natural identification $A(R)[t_i^{-1}]\simeq R(\!(t_i)\!)$,
for $i=1,2$, so that $\iota$ can be identified with the map
$$A(R)\to A(R)[t_1^{-1}]\oplus A(R)[t_2^{-1}].$$

\begin{lemma}\label{A(R)-basic-lem}
(i) The morphism $\iota$ is injective (in fact, a direct summand as an $R$-submodule), and
we have an inclusion
$$R[\![t_1]\!]t_1^{n+1}\oplus R[\![t_2]\!]t_2^{n+1}\sub \im(\iota).$$

\noindent (ii)
If an element $u\in A(R)^*$ satisfies $t_1\cdot u=t_1$, $t_2\cdot u^{-1}=t_2$, then $u=1$.
The functor $R\mapsto \GG_{\glue,n}(R)$ is represented by a flat group scheme $\GG_{\glue,n}$ over $S_n=\Spec(\Z[q]/(q^{n+1}))$ 
(which as a scheme is isomorphic to
the product of $\G_m$ with the infinite dimensional affine space over $S_n$). One has
$$\GG_{\glue,n+1}\times_{S_{n+1}} S_n\simeq \GG_{\glue,n}.$$
\end{lemma}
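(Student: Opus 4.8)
\emph{Module structure and part (i).}
The plan is first to establish that $A(R)$ is a free $R$-module on $\{1\}\cup\{t_1^a\}_{a\ge1}\cup\{t_2^b\}_{b\ge1}$: any power series in $R[\![t_1,t_2]\!]$ is reduced modulo $t_1t_2-q$ by repeated substitution $t_1t_2=q$, and since $q^{n+1}=0$ the resulting expansions in the $t_1^a$, the $t_2^b$ and $1$ converge and are unique. In this basis $\iota$ is given by $1\mapsto(1,1)$, $t_1^a\mapsto(t_1^a,q^at_2^{-a})$, $t_2^b\mapsto(q^bt_1^{-b},t_2^b)$. Hence the first component of $\iota(x)$ already recovers the $1$-coefficient and all $t_1^a$-coefficients of $x$, and the second component recovers the $t_2^b$-coefficients; this gives injectivity, and moreover $r(f,g):=\sum_{a\ge1}f_at_1^a+f_0+\sum_{b\ge1}g_bt_2^b$ (where $f=\sum_af_at_1^a$, $g=\sum_bg_bt_2^b$, using the module basis on the target) is an $R$-linear retraction of $\iota$, so $\im(\iota)$ is a direct summand. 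For the claimed inclusion, given $f=\sum_{a\ge n+1}f_at_1^a$ and $g=\sum_{b\ge n+1}g_bt_2^b$, the element $x=\sum_{a\ge n+1}f_at_1^a+\sum_{b\ge n+1}g_bt_2^b\in A(R)$ satisfies $\iota(x)=(f,g)$, since the ``cross'' terms that would appear are multiples of $q^{n+1}=0$.

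\emph{Part (ii), the injection $\GG_{\glue,n}(R)\hra A(R)^{*}$.}
If $t_1u=t_1$ and $t_2u^{-1}=t_2$, then $t_1(u-1)=0$, and multiplying the second relation by $-u$ also $t_2(u-1)=0$; applying the injection $\iota$ and using that $t_1$, resp.\ $t_2$, is invertible in $R(\!(t_1)\!)$, resp.\ $R(\!(t_2)\!)$, forces $u=1$. It follows that a continuous automorphism $\a$ of $A(R)$ of the prescribed type determines its unit $u$ (with $\a(t_1)=t_1u$) uniquely, and conversely $\a$ is determined by $u$ since $\a$ is continuous and $A(R)$ is topologically generated by $t_1,t_2$; so $\a\mapsto u$ is an injection $\GG_{\glue,n}(R)\hra A(R)^{*}$, natural in $R$.

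\emph{Surjectivity onto $A(R)^{*}$ and the scheme structure.}
The main point is that every unit $u\in A(R)^{*}$ occurs. For any such $u$, the rule $t_1\mapsto t_1u$, $t_2\mapsto t_2u^{-1}$ extends to a continuous $R$-algebra endomorphism $\a_u$ of $A(R)$: it kills $t_1t_2-q$ automatically by commutativity, and it is continuous because $t_1u,t_2u^{-1}\in(t_1,t_2)A(R)$ and $A(R)$ is separated and complete for the $(t_1,t_2)$-adic topology --- the latter being immediate from the free-basis description, each basis vector lying in arbitrarily high powers of $(t_1,t_2)$. To see $\a_u$ is invertible, first compose with the rescaling automorphism $t_1\mapsto u_0t_1$, $t_2\mapsto u_0^{-1}t_2$, where $u_0\in R^{*}$ is the constant term of $u$, reducing to the case $u\equiv1\bmod(t_1,t_2)A(R)$. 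Then $\de_i:=\a_u(t_i)-t_i$ lies in $(t_1,t_2)^2A(R)$ for $i=1,2$, and a telescoping expansion of $\a_u$ on a product of $k$ of the $t_i$'s shows $\a_u(y)-y\in(t_1,t_2)^{k+1}A(R)$ for all $y\in(t_1,t_2)^kA(R)$; hence $\a_u$ induces the identity on the associated graded of $A(R)$, and by completeness $\a_u$ is bijective. Therefore $\GG_{\glue,n}(R)=A(R)^{*}$ as functors in $R$ (with the twisted composition $\a_u\a_v=\a_{u\cdot\a_u(v)}$). Writing an element of $A(R)$ as $u_0\cdot1+\sum_{a\ge1}\b_at_1^a+\sum_{b\ge1}\ga_bt_2^b$, it is a unit precisely when $u_0$ is a unit in $R$ (equivalently a unit mod $qR$, as $qR$ is nilpotent), with $\b_a,\ga_b$ unconstrained; thus $\GG_{\glue,n}$ is represented by $\G_m\times\A^{\infty}_{S_n}$, flat over $S_n$, and this description (or simply the moduli description, noting that for an $A_n$-algebra $R$ the ring $A(R)$ does not depend on whether we regard $R$ as an $A_n$- or $A_{n+1}$-algebra) yields $\GG_{\glue,n+1}\times_{S_{n+1}}S_n\cong\GG_{\glue,n}$.

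\emph{Expected difficulty.}
Everything but the surjectivity step is bookkeeping with the explicit $R$-basis of $A(R)$. The one delicate point is showing $\a_u$ is an \emph{automorphism} for an arbitrary (not merely ``constant-leading'') unit $u$: note that $\a_u$ need not extend to an automorphism of $R(\!(t_1)\!)$, since its image of $t_1$ can acquire a pole of order $\le n$, so the argument must be run on the complete filtered ring $A(R)$ itself through the associated graded, as above.
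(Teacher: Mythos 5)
Your proposal is correct, and in one place it does more than the paper writes down. For (i) the paper argues differently: it first notes $\iota(t_1^k)=(t_1^k,0)$, $\iota(t_2^k)=(0,t_2^k)$ for $k\ge n+1$, then reduces injectivity to the induced map from the finite quotient $R[t_1,t_2]/(t_1^{n+1},t_2^{n+1},t_1t_2-q)$ to a finite-rank free quotient $P/P'$, and concludes because that map is an isomorphism modulo $q$ and $q$ is nilpotent. You instead use the explicit topological $R$-basis $\{1,t_1^a,t_2^b\}$ of $A(R)$ and the formulas $\iota(t_1^a)=(t_1^a,q^at_2^{-a})$, $\iota(t_2^b)=(q^bt_1^{-b},t_2^b)$, which gives injectivity together with an explicit retraction (hence the direct-summand claim), at the cost of justifying the basis description; your reduction modulo $t_1t_2-q$, with $q^{n+1}=0$ ensuring convergence, does that, and in fact only the spanning part of it is needed for the argument. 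For (ii) your $u=1$ argument is the paper's (localization at $t_1$, $t_2$ plus injectivity of $\iota$). Where you genuinely add something: the paper passes from the injection $\GG_{\glue,n}(R)\hra A(R)^*$ to a bijection in one sentence, leaving implicit that every unit $u$ really defines an automorphism $\a_u$ of $A(R)$; your reduction to $u\equiv 1$ modulo $(t_1,t_2)$ followed by the associated-graded/successive-approximation argument on the complete filtered ring $A(R)$ supplies exactly this missing step, and sensibly works on $A(R)$ itself rather than inside $R(\!(t_i)\!)$.

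One slip to repair: in justifying that $A(R)$ is $(t_1,t_2)$-adically separated and complete you say ``each basis vector lying in arbitrarily high powers of $(t_1,t_2)$''. As stated this is false (e.g.\ $t_1\notin(t_1,t_2)^2$, as one sees by reducing modulo $q$), and if it were true it would contradict separatedness. What you need, and what is true, is: (a) the basis vectors $t_1^a,t_2^b$ with $a,b\ge k$ lie in $(t_1,t_2)^k$, which is what makes your substitutions converge and gives completeness; and (b) $\bigcap_k(t_1,t_2)^k=0$, which follows from the observation that the generators $t_1^it_2^j$ with $i+j=k$ equal $q^{\min(i,j)}t_1^{i-j}$ or $q^{\min(i,j)}t_2^{j-i}$, so the coefficients of any element of $(t_1,t_2)^k$ are divisible by powers of $q$ that grow with $k$ and hence vanish for $k\gg 0$ since $q^{n+1}=0$. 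With that replacement your argument for the surjectivity step, and hence the whole proof, is complete.
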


\begin{proof} (i) First we observe  $\iota(x_1^i)=(x_1^i,0)$, $\iota(x_2^i)=(0,x_2^i)$ for $i\ge n+1$, which implies the required inclusion
into $\im(\iota)$. Thus, it is enough to prove that the map
$$\ov{\iota}:R[t_1,t_2]/(t_1^{n+1},t_2^{n+1},t_1t_2-q)\to P:=R(\!(t_1)\!)/R[\![t_1]\!]t_1^{n+1}\oplus R(\!(t_2)\!)/R[\![t_2]\!]t_2^{n+1},$$
induced by $\iota$ is injective. Next, let $P'$ be the $R$-submodule of $P$ with the basis 
$(1,0)$, $(x_1^{-i},x_2^{-i})_{i\le 1}$. Then the $R$-module $P/P'$ is free of finite rank, and it is enough to prove that the map
$$R[t_1,t_2]/(t_1^{n+1},t_2^{n+1},t_1t_2-q)\to P/P'$$
of free $R$-modules is an isomorphism. But this is true, since it becomes an isomorphism modulo $q$, and $q$ is nilpotent.

\noindent
(ii)
Indeed, for such an element we see that $u-1$ goes to zero under the localization with respect to $t_1$ and under
the localization with respect to $t_2$. Hence, $\iota(u-1)=0$, so $u=1$.
It follows that the elements of $\GG_{\glue,n}(R)$ are in bijection with $A(R)^*$.
Note that an element $u$ of $A(R)$ is a unit if and only if it has form
$$u_0+a_1t_1+a_2t_1^2+\ldots+b_1t_2+b_1t_2^2+\ldots,$$
where $u_0\in R^*$, $a_i\in R$, $b_i\in R$. This gives an isomorphism of $\GG_{\glue,n}$ with the product of $\G_m$ and an infinite-dimensional
affine space over $S_n$. The last assertion is clear.
\end{proof}



It is clear that any $\a\in \GG_{\glue,n}(R)$ induces well defined homomorphisms
$\a_i$ of $A(R)[t_i^{-1}]\simeq R(\!(t_i)\!)$, so that 
$$(\a_1,\a_2)\circ\iota=\iota\circ\a.$$
In this way we get an injective morphism of group ind-schemes
\begin{equation}\label{gluing-scheme-homomorphism-eq}
\GG_{\glue,n}(R)\to (\GG\times \GG)_{S_n}(R)=\Aut R(\!(t_1)\!) \times \Aut R(\!(t_2)\!): \a\mapsto (\a_1,\a_2).
\end{equation}
More explicitly, if $\a(t_1)=t_1\cdot u(t_1,t_2)$ then
$$\a_1(t_1)=t_1\cdot u(t_1,q/t_1), \ \ \a_2(t_2)=t_2\cdot u(q/t_2,t_2)^{-1}.$$
In particular, this map defines an action of $\GG_{\glue,n}(R)$ on $R(\!(t_1)\!)$ and on $R(\!(t_2)\!)$.
We can think of this geometrically as two actions of $\GG_{\glue,n}$ on the punctured formal disk.

\begin{example} For $r\in R$, we can consider the element $\a\in \GG_{\glue,1}(R)$ corresponding to $u=1+rt_1$.
Then we have
$$\a_1(t_1)=t_1+rt_1^2, \ \ \a_2(t_2)=t_2-qr$$
(where we use the vanishing of $q^2$),
so $\a_1$ acts trivially on the tangent space of the formal disk, while $\a_2$ corresponds to the infinitesimal translation.
\end{example}

For a unit $\la\in R^*$ let us denote by $R(\la q)$ the same ring $R$ but with the $\Z[q]/(q^{n+1})$-algebra structure
given by $\la\cdot q$. Then we have a natural map
$$a_\la:\GG_{\glue,n}(R)\to \GG_{\glue,n}(R(\la q))$$
sending $u(t_1,t_2)\in A(R)^*$ to $u(\la t_1,t_2)\in A(R(\la q))^*$.
This defines a $\G_m$-equivariant structure on $\GG_{\glue,n}$, so that it descends to a group scheme
over $\Spec(\Z[q]/(q^{n+1}))/\G_m$.

Let us denote by $\GG_1\sub \GG$ the subgroup given by $\GG_1(R)=\Aut_1 R[\![t]\!]$.

\begin{prop}\label{group-scheme-Lie-prop}
(i) The Lie algebra of $\GG_{\glue,n}$ can be naturally identified with
$\widehat{\WW}^0_n$, compatibly with the $\G_m$-equivariant structures and
with their actions on the punctured disks, so that the Lie algebra involution
\eqref{kappa-Lie-involution} is induced by the group involution $\kappa$.

\noindent
(ii) There are natural embedding of groups
$$j_1:\GG_1(R)=\Aut_1 R[\![t_1]\!]\to \GG_{\glue,n}(R), \ \ j_2:\GG'_1(R)=\Aut_1 R[\![t_2]\!]\to \GG_{\glue,n}(R), \ \ R^*\to \GG_{\glue,n}(R),$$
corresponding to the cases when the unit $u\in A(R)$ depends only on $t_1$, only on $t_2$, or is in $R^*$, respectively. 
The product map
\begin{equation}\label{Aut-decomposition}
\Aut_1 R[\![t_1]\!]\times R^*\times \Aut_1 R[\![t_2]\!]\to \GG_{\glue,n}(R)
\end{equation}
is an isomorphism of schemes.
Consider the subgroups $\Aut_n(R[\![t]\!],n)\sub \Aut_1(R[\![t]\!])$ consisting of automorphisms trivial modulo $t^{n+1}$.
Then the map
$$\Aut_n(R[\![t_1]\!])\times \Aut_n(R[\![t_2]\!])\to \GG_{\glue,n}(R)$$
is an embedding of a subgroup, and its composition with the embedding to $\GG(R)$ is the product of the natural embeddings of the factors.
\end{prop}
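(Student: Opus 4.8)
The plan is to read both parts off the explicit description of $\GG_{\glue,n}$ in Lemma \ref{A(R)-basic-lem}, using the embedding \eqref{gluing-scheme-homomorphism-eq} to import the Lie-algebraic statements from two honest Witt algebras, and a reduction modulo the nilpotent $q$ for the scheme-theoretic statements.

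For (i), I would first compute $\operatorname{Lie}(\GG_{\glue,n})$ as $\ker\big(\GG_{\glue,n}(R[\eps]/(\eps^2))\to\GG_{\glue,n}(R)\big)$. By Lemma \ref{A(R)-basic-lem}(ii) an element of $\GG_{\glue,n}(R[\eps])$ is recorded by a unit of $A(R[\eps])$, and those lying over $1\in A(R)^*$ are the $u=1+\eps v$ with $v\in A(R)$; so $\operatorname{Lie}(\GG_{\glue,n})$ is canonically $A(R)$, the corresponding automorphism being $t_1\mapsto t_1(1+\eps v)$, $t_2\mapsto t_2(1-\eps v)$. Writing $v=v_0+\sum_{i\ge1}a_it_1^i+\sum_{i\ge1}b_it_2^i$ in the decomposition $A(R)=R[\![t_1]\!]\oplus t_2R[\![t_2]\!]$, I would set up the identification $\operatorname{Lie}(\GG_{\glue,n})\cong\widehat{\WW}^0_n\ot R$ by $a_i\mapsto M_i$, $b_i\mapsto M_{-i}$, $v_0\mapsto M_0$ (up to the overall sign inherent in the Lie algebra of an automorphism group, with the paper's convention $[L_m,L_n]=(m-n)L_{m+n}$). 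To see it is a Lie isomorphism compatible with the two disk actions, push it through \eqref{gluing-scheme-homomorphism-eq}: the induced map $\operatorname{Lie}(\GG_{\glue,n})\hra\WW_{R(\!(t_1)\!)}\oplus\WW_{R(\!(t_2)\!)}$ is injective because $\iota$ is (Lemma \ref{A(R)-basic-lem}(i)), and from $\a_1(t_1)=t_1u(t_1,q/t_1)$ and $\a_2(t_2)=t_2u(q/t_2,t_2)^{-1}$ one reads off that $v=t_1^i$ ($i\ge0$) goes to $(L_i,-q^iL_{-i})$ and $v=t_2^i$ goes to $(q^iL_{-i},-L_i)$. These are exactly the images of $M_i$ (resp.\ $M_{-i}$) under the map $\widehat{\WW}^0_n\to\WW\ot A_n$ realizing the Lie action on the $t_1$-disk, in the first factor, and under that map precomposed with $\kappa$, in the second. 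Since the first of these maps is a Lie homomorphism — essentially by the relation $[M_i,M_{-j}]=q^{\min(i,j)}(i+j)M_{i-j}$ — and $\kappa$ is a Lie involution, the bracket transported to $\operatorname{Lie}(\GG_{\glue,n})$ is that of $\widehat{\WW}^0_n$, while the displayed formulas identify the image of $\operatorname{Lie}(\GG_{\glue,n})$ with that of $\widehat{\WW}^0_n$; hence the identification is an isomorphism. (Alternatively one computes the bracket directly as $[v,w]=\partial_vw-\partial_wv$ with $\partial_v=t_1v\,\partial_{t_1}-t_2v\,\partial_{t_2}$ and checks it on monomials, the $q^{\min(i,j)}$ emerging when one reduces $t_1^at_2^b$ modulo $t_1t_2-q$.) Compatibility with the $\G_m$-structure ($\a_\la(M_n)=\la^nM_n$ for $n\ge0$, $=M_n$ for $n<0$) and with the involutions ($\kappa(M_n)=-M_{-n}$) then drops out of the coefficient-level descriptions of $a_\la\colon u(t_1,t_2)\mapsto u(\la t_1,t_2)$ and $\kappa\colon u(t_1,t_2)\mapsto u(t_2,t_1)^{-1}$.

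For (ii), define $j_1(\phi)$, for $\phi\in\Aut_1R[\![t_1]\!]$, as the element with $u=\phi(t_1)/t_1$ (a unit with constant term $1$ depending only on $t_1$); define $j_2$ by the same recipe with $t_1,t_2$ interchanged (equivalently $j_2=\kappa\circ j_1$, up to relabeling the variable); and $R^*\to\GG_{\glue,n}$ by $u\in R^*$. That these are group embeddings follows at once from the composition law $(\a\circ\b)(t_1)=t_1\cdot(u\cdot\a(v))$ recorded before Lemma \ref{A(R)-basic-lem}. The product map \eqref{Aut-decomposition} is then $(\phi,\la,\psi)\mapsto j_1(\phi)\cdot[\la]\cdot j_2(\psi)$; it is a morphism of schemes over $S_n$, and since $q$ is nilpotent and all schemes in sight are flat over $S_n$ (Lemma \ref{A(R)-basic-lem}(ii)), it suffices to check it is an isomorphism modulo $q$ — where it is the isomorphism $\GG_{\glue,0}(R)\simeq\Aut_1R[\![t_1]\!]\times R^*\times\Aut_1R[\![t_2]\!]$ already recorded for $n=0$. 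Finally, for $\phi,\psi$ in the deep congruence subgroups $\Aut_n$ the units $\phi(t_1)/t_1$ and $(\psi(t_2)/t_2)^{-1}$ lie in $1+t_1^{n+1}R[\![t_1]\!]$ and $1+t_2^{n+1}R[\![t_2]\!]$ respectively; hence, by the inclusion $R[\![t_1]\!]t_1^{n+1}\oplus R[\![t_2]\!]t_2^{n+1}\sub\im(\iota)$ of Lemma \ref{A(R)-basic-lem}(i), there is a unique (by injectivity of $\iota$) unit $u\in A(R)^*$ with $\iota(u)=\big(\phi(t_1)/t_1,\,(\psi(t_2)/t_2)^{-1}\big)$; sending $(\phi,\psi)$ to this $u$ defines the map to $\GG_{\glue,n}(R)$, and because \eqref{gluing-scheme-homomorphism-eq} is an injective homomorphism of groups, the image is a subgroup isomorphic to $\Aut_nR[\![t_1]\!]\times\Aut_nR[\![t_2]\!]$ whose composition with \eqref{gluing-scheme-homomorphism-eq} is, by construction, the product of the natural inclusions.

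I expect the main obstacle to be the bookkeeping in (i): pinning down the correspondence between the coefficients of $u$ and the generators $M_i$ exactly — including the sign coming from the standard discrepancy between the Lie algebra of an automorphism group and that of vector fields — and checking that the somewhat unusual bracket of $\widehat{\WW}^0_n$, with its $q^{\min(i,j)}$ coefficients, is reproduced on the nose; routing everything through the two honest Witt algebras $\WW_{R(\!(t_i)\!)}$, where the brackets are literally standard, is what keeps this manageable. In (ii) the only delicate point is the realizability statement used at the end, which is exactly what the description of $\im(\iota)$ in Lemma \ref{A(R)-basic-lem}(i) provides; the rest reduces modulo the nilpotent $q$ to the already-settled case $n=0$.
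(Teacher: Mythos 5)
Your proposal is correct in substance, but it takes a noticeably different route from the paper in part (ii). For (i) you do essentially what the paper does (identify the tangent space with $A(R)$ via units $1+\eps v$ and match $t_1^i,t_2^i$ with $M_i,M_{-i}$), except that where the paper simply performs ``a direct computation of the commutators'' inside $A(D)$, you transport the bracket through the injective homomorphism \eqref{gluing-scheme-homomorphism-eq} into the two honest Witt algebras, reading off the images $(L_i,-q^iL_{-i})$ and $(q^iL_{-i},-L_i)$; this is a clean way to organize the same verification and makes the compatibility with the disk actions and with $\kappa$ automatic. The real divergence is in the surjectivity/bijectivity of \eqref{Aut-decomposition}: the paper argues by induction on $n$, reducing to units $u\equiv 1\bmod q^n$ and exhibiting the explicit factorization $u=u_1\cdot\a_{u_1}(u_2)$ using $q^{2n}=0$ and $q^nt_1t_2=0$, whereas you invoke flatness of both sides over $S_n$ (Lemma \ref{A(R)-basic-lem}(ii)) plus nilpotence of $q$ to reduce the scheme-isomorphism claim to the already-recorded case $q=0$. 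Your deformation-theoretic reduction is valid (both sides are affine and flat, and an $A_n$-linear map of flat modules that is an isomorphism mod $q$ is an isomorphism by the finite $q$-adic filtration), and it is arguably shorter; the paper's induction has the virtue of producing the decomposition explicitly, which is in the same spirit as how the decomposition is actually used later (e.g.\ in Proposition \ref{genus1-1-prop}).

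One caveat on the last assertion: you state that for $\phi$ ``trivial modulo $t^{n+1}$'' the unit $\phi(t_1)/t_1$ lies in $1+t_1^{n+1}R[\![t_1]\!]$; literally, triviality mod $t^{n+1}$ only gives $\phi(t_1)/t_1\in 1+t_1^{n}R[\![t_1]\!]$, and for such units the pair $\bigl(\phi(t_1)/t_1,(\psi(t_2)/t_2)^{-1}\bigr)$ need not lie in $\im(\iota)$ (the obstruction is a coefficient $a_n$ with $a_nq^n\neq 0$). Your construction via the inclusion $R[\![t_1]\!]t_1^{n+1}\oplus R[\![t_2]\!]t_2^{n+1}\sub\im(\iota)$ is exactly right for units in $1+t_i^{n+1}R[\![t_i]\!]$, and more generally whenever the cross-terms $u(q/t_2)$, $u(q/t_1)$ collapse to $1$ (e.g.\ for the automorphisms trivial mod $q^n$ that are what Proposition \ref{genus1-1-prop} actually uses); so the argument is fine, but you should state explicitly the congruence level at which it applies rather than attributing it to the literal ``trivial mod $t^{n+1}$'' condition.
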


\begin{proof}
(i) We have to consider elements of $\GG_{\glue,n}(D)$, where $D=\Z[q,\eps]/(q^{n+1},\eps^2)$, reducing to the identity modulo $\eps$, which
are given by units of the form $u=1+\eps a$, where $a\in A(\Z[q]/(q^{n+1}))$. The basis element $M_i$ for $i\ge 0$ correspond to $u_i=1+\eps t_1^i$, while
$M_{-i}$ for $i\ge 0$ corresponds to $u_{-i}=1+\eps t_2^i$. A direct computation of the commutators shows that these elements satisfy the relations of $\widehat{\WW}^0_n$.

\noindent
(ii) To prove that \eqref{Aut-decomposition} is injective we need to check that the intersection
of subgroups $\Aut_1 R[\![t_1]\!]$ and $\Aut R[\![t_2]\!]$ in $\GG_{\glue,n}(R)$ is trivial. Indeed, the corresponding unit $u\in A(R)^*$ belongs 
at the same time to $1+t_1R[\![t_1]\!]$ and to $R[\![t_2]\!]$, hence $u=1$. 

To check surjectivity, 
let us use the induction on $n$. If $q=0$ then the assertion is clear. Assume that $n\ge 1$ and we know the result modulo $q^n$,
so for any element $g\in \GG_{\glue,n}(R)$, there exists a decomposition $g \mod q^n=g_1\cdot g_2$, where $g_1$ and $g_2$ belong to the $R/(q^n)$-points of the corresponding subgroups.
Lifting $g_1$ and $g_2$ to $R$-points $\wt{g}_1$ and $\wt{g}_2$ and replacing $g$ by $\wt{g}_1^{-1}g\wt{g}_2^{-1}$, we reduce to the case when $g \mod q^n=1$.
Thus, we need to show that every unit $u\in A(R)^*$ of the form $u=1+q^na$, can be
represented in the form
$$u=u_1\cdot \a_{u_1}(u_2),$$
where $u_1\in 1+t_1R[\![t_1]\!]$, $u_2\in R[\![t_2]\!]$ and $\a_{u_1}$ is an automorphism of $A(R)$ associated with $u_1$.
We can write $a=a_1+a_2$, where $a_1\in t_1R[\![t_1]\!]$ and $a_2\in R[\![t_2]\!]$. Now we can take
$u_1=1+q^na_1$, $u_2=1+q^na_2$. Note that $q^{2n}=0$, so $u_1^{-1}=1-q^na_1$ and since $q^nt_1t_2=0$, we deduce that
$\a_{u_1}$ acts trivially on $R[\![t_2]\!]$. Thus,
$$u_1\cdot \a_{u_1}(u_2)=u_1\cdot u_2=1+q^n(a_1+a_2)=u.$$

The last assertion is straightforward.
\end{proof}

\subsection{Isomorphisms between node deformations}


\begin{prop}\label{Aut-AR-prop} 
Suppose $q,q'\in R$ are two nilpotent elements. Then any continuous isomorphism of $R$-algebras,
$$\a:R[\![x_1,x_2]\!]/(x_1x_2-q')\to R[\![x_1,x_2]\!]/(x_1x_2-q)$$
sending $(x_1)$ to $(x_1)$ and $(x_2)$ to $(x_2)$, is a composition of an isomorphism
$$x_1\mapsto \la x_1, \ x_2\mapsto x_2,$$
for uniquely defined $\la\in R^*$, where $q'=\la q$, followed by
an automorphism from $\GG_{\glue,n}(R)$.
\end{prop}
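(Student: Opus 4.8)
The plan is to peel off the scaling factor first and reduce everything to producing an element of $\GG_{\glue,n}(R)$. Throughout write $A(R)=R[\![x_1,x_2]\!]/(x_1x_2-q)$, let $A'(R)$ be the source of $\alpha$, and let $\epsilon\colon A(R)\to R$ be the $R$-linear splitting sending $1\mapsto1$ and all $x_1^i,x_2^j\mapsto0$ (well defined since by Lemma~\ref{A(R)-basic-lem}(i) the module $A(R)$ is free over $R$ on $1$ and the monomials $x_1^i,x_2^j$, $i,j\ge1$, whence also $\operatorname{Ann}_{A(R)}(x_1)=\{\sum_{j\ge1}d_jx_2^j:qd_j=0\}$ and $\operatorname{Ann}_{A(R)}(x_2)=\{\sum_{i\ge1}c_ix_1^i:qc_i=0\}$, so that $\operatorname{Ann}(x_1)+\operatorname{Ann}(x_2)=\{f\in A(R):\epsilon(f)=0,\ qf=0\}$). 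The first step I would take is to record the normal form of $\alpha$: since $\alpha$ maps $(x_i)=x_iA(R)$ onto itself, $\alpha(x_i)=x_ia_i$ for some $a_i\in A(R)$; reducing modulo $(x_2)$ makes $\alpha$ an automorphism of $A(R)/(x_2)\cong(R/q)[\![x_1]\!]$ preserving $(x_1)$, which forces $\epsilon(a_1)$ to be a unit in $R/q$, hence in $R$ (as $q$ is nilpotent), hence $a_1\in A(R)^*$ by Lemma~\ref{A(R)-basic-lem}(ii); similarly $a_2\in A(R)^*$. Evaluating $\alpha$ on $x_1x_2$ gives $q\,a_1a_2=\alpha(q')=q'$ in $R$.

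Next I would pin down the scaling factor. Set $\lambda:=\epsilon(a_1a_2)$. Because $A(R)/q\cong(R/q)[\![x_1,x_2]\!]/(x_1x_2)$ carries a genuine augmentation, $\epsilon$ is multiplicative modulo $q$, so $\lambda\equiv\epsilon(a_1)\epsilon(a_2)\bmod q$ is a unit. Writing $a_1a_2=\lambda+m$, the identity $q\,a_1a_2=q'\in R$ forces, comparing standard forms, both $q'=\lambda q$ — so that $s_\lambda\colon A'(R)\to A(R)$, $x_1\mapsto\lambda x_1$, $x_2\mapsto x_2$, is a well-defined isomorphism with inverse $x_1\mapsto\lambda^{-1}x_1$ — and $qm=0$. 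The crux is then the claim that $\lambda^{-1}a_1-a_2^{-1}$ lies in $\operatorname{Ann}(x_1)+\operatorname{Ann}(x_2)$. To prove it, put $m':=(a_1a_2)^{-1}-\lambda^{-1}$; from $q\,a_1a_2=q\lambda$ one gets $qm'=0$, and expanding $(\lambda+m)(\lambda^{-1}+m')=1$ gives $m'=-\lambda^{-1}m(a_1a_2)^{-1}$, whose constant term vanishes because $m$ has no constant term and all its coefficients are killed by $q$ (the only contributions to $\epsilon(mm'')$ for any $m''$ are $q^i$-multiples of those coefficients, which are zero). Hence $m'$, and therefore $\lambda^{-1}a_1-a_2^{-1}=-a_1m'$, lies in the ideal $\{f:\epsilon(f)=0,\ qf=0\}=\operatorname{Ann}(x_1)+\operatorname{Ann}(x_2)$.

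Finally I would assemble the factorization. Decompose $\lambda^{-1}a_1-a_2^{-1}=p+r$ with $p\in\operatorname{Ann}(x_1)$, $r\in\operatorname{Ann}(x_2)$, and set $u:=\lambda^{-1}a_1-p=a_2^{-1}+r$; then $\epsilon(u)=\lambda^{-1}\epsilon(a_1)\in R^*$, so $u\in A(R)^*$, and the associated $\beta\in\GG_{\glue,n}(R)$ (with $\beta(x_1)=x_1u$, $\beta(x_2)=x_2u^{-1}$) satisfies $\beta(s_\lambda(x_1))=\lambda x_1u=x_1a_1=\alpha(x_1)$ (using $\lambda u-a_1=-\lambda p\in\operatorname{Ann}(x_1)$) and $\beta(s_\lambda(x_2))=x_2u^{-1}=x_2a_2=\alpha(x_2)$ (using $u^{-1}-a_2\in\operatorname{Ann}(x_2)$), so $\alpha=\beta\circ s_\lambda$. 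For uniqueness of $\lambda$ I would use the embedding $\iota=(\iota_1,\iota_2)$ of \eqref{iota-def}: a second factorization $\alpha=\beta'\circ s_{\lambda'}$ makes $\beta^{-1}\beta'=s_\lambda s_{\lambda'}^{-1}\in\GG_{\glue,n}(R)$ send $x_1\mapsto(\lambda/\lambda')x_1$, $x_2\mapsto x_2$, so its associated unit $w$ has $\iota_1(w)=\lambda/\lambda'$ and $\iota_2(w)=1$; but $\iota_2(w)=1$ forces $w-1\in\ker\iota_2=\{\sum_{i\ge1}e_ix_1^i:q^ie_i=0\}$, and then $\iota_1(w)=1+\sum_{i\ge1}e_ix_1^i$ is a constant only if all $e_i$ vanish, i.e.\ $w=1$ and $\lambda=\lambda'$.

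The step I expect to be the main obstacle is the crux claim about $\lambda^{-1}a_1-a_2^{-1}$. Since $\epsilon$ is only a ring homomorphism modulo $q$, there are a priori obstructions of order $q^{\ge2}$ to writing $\alpha$ in the stated form, and the whole content is that these are killed precisely by the relation $q\,a_1a_2=q'\in R$ (equivalently $qm=0$), through the elementary but slightly delicate constant-term computation for products of $q$-torsion elements above. A secondary subtlety is that the uniqueness of $\lambda$ really requires the injectivity of $\iota$ — comparing constant terms alone does not suffice.
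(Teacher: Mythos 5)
Your proof is correct, and it ultimately rests on the same mechanism as the paper's: writing $\a(x_i)=x_ia_i$ with $a_i\in A(R)^*$, the single relation $q\,a_1a_2=\a(x_1x_2-q')+q'=q'\in R$ forces $q'=\la q$ together with $q$-torsion of the nonconstant coefficients, which is exactly what makes those coefficients annihilate $x_1$ or $x_2$. The difference is in the decomposition. The paper first normalizes to the case $\a(x_2)=x_2$ by composing $\a$ with a gluing-group element built from $u_2$, and then reads everything off from a single unit $u=\la+x_1f_1+x_2f_2$ with $qf_1=qf_2=0$; this makes the coefficient computation very short, at the cost of some composition bookkeeping (the element ``corresponding to $u_2$'' and the formula $u=u_1u_2$ are only correct after re-expressing the arguments of the series). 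You instead treat $a_1$ and $a_2$ symmetrically: you define $\la=\epsilon(a_1a_2)$ and use the constant-term computation for products of $q$-torsion elements to show $\la^{-1}a_1-a_2^{-1}\in\operatorname{Ann}(x_1)+\operatorname{Ann}(x_2)$, which lets you splice $\la^{-1}a_1$ and $a_2^{-1}$ into one unit $u$ in a single step, with no preliminary composition; the price is the slightly more delicate product lemma and the explicit identification of $\operatorname{Ann}(x_1)+\operatorname{Ann}(x_2)$ inside $A(R)$ (both of which you handle correctly, leaning on the free-module description and the unit criterion from Lemma \ref{A(R)-basic-lem}(ii)). Your uniqueness argument via the embedding $\iota$ of \eqref{iota-def} is the same computation as the paper's direct coefficient comparison, just packaged through $\ker\iota_2$; both are valid.
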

 
\begin{proof}
Since $\a((x_1x_2))=(x_1x_2)$, we have the equality of ideals $(q)=(q')$, so $\a$ induces an automorphism of
$R/q[\![x_1,x_2]\!]/(x_1x_2)$.
 
By assumption, we have
$$\a(x_1)=x_1u_1(x_1,x_2), \ \ \a(x_2)=x_2u_2(x_1,x_2).$$
Reducing modulo $q$, we deduce that $u_1=\la_1\mod (x_1,x_2)$, $u_2=\la_2\mod (x_1,x_2)$, where $\la_1$ and $\la_2$ project to units in $R/(q)$.
Hence, $\la_1$ and $\la_2$ are units in $R$, and so $u_1$ and $u_2$ are units in $A(R)$.
Post-composing $\a$ with an element of $\GG_{\glue,n}(R)$ corresponding to $u_2$, we obtain an isomorphism
$$\a'(x_1)=x_1u(x_1,x_2), \ \ \a'(x_2)=x_2,$$
where $u=u_1u_2$ is a unit in $A(R)$.
Let us write
$$u=\la+x_1f_1(x_1)+x_2f_2(x_2),$$
where $\la\in R^*$.
Then the condition 
$$q'=\a'(x_1)\a'(x_2)=x_1x_2u=qu$$
implies that 
$$q'=\la q, \ \ qf_1=0, \ \ qf_2=0.$$
It follows that
$$x_1x_2f_2=0,$$
so we can replace $u$ with $u'=\la+x_1f_1(x_1)$ and still have $\a'(x_1)=x_1u'$.
Since we also have 
$$x_2x_1f_1=0,$$
if we write $u'=\la\cdot u_1$, then we get
$$x_2\cdot (u_1-1)=0.$$
Hence, 
$$x_2\cdot u_1^{-1}=x_2+x_2(u_1^{-1}-1)=x_2,$$
and the automorphism $x_1\mapsto x_1\cdot u_1$, $x_2\mapsto x_2$ belongs to $\GG_{\glue,n}(R)$.
Thus, $\a'$ is the composition of the isomorphism $x_1\mapsto \la x_1$, $x_2\mapsto x_2$ with an element of $\GG_{\glue,n}(R)$,
as claimed.

To prove the uniqueness of $\la$ we have to check that if 
$$\la\cdot x_1=x_1\cdot u, \ \ x_2=x_2\cdot u^{-1}$$
for some unit $u$ in $A(R)$ and some $\la\in R^*$, then $\la=1$.
Writing 
$$u=\la_1+x_1f_1(x_1)+x_2f_2(x_2),$$ 
where $\la_1\in R^*$, we get that
$$\la\cdot x_1=\la_1+x_1^2f_1(x_1)+qf_2(x_2),$$
which implies that $f_1=0$. Hence $u^{-1}$ has form
$$u^{-1}=\la_1^{-1}+x_2\wt{f}_2(x_2).$$
But then the condition $x_2=x_2\cdot u^{-1}$ implies that $\wt{f}_2=0$ and $\la_1=1$. Hence $u^{-1}=1$ and $\la=1$.
\end{proof}

\section{Extended clutching construction}\label{ext-clutch-sec}

\subsection{Construction}

Let $\pi:\wt{C}_0\to \Spec(R)$ be a proper flat family of curves over an affine base, equipped with a pair of distinct sections $p_1,p_2:\Spec(R)\to \wt{C}_0$, such that
$\pi$ is smooth along the images of $p_1$ and $p_2$ (abusing the notation we will denote these images also by $p_1$ and $p_2$).
Then one can define a new family $C_0\to \Spec(R)$ obtained from $\wt{C}_0$ by gluing $p_1$ with $p_2$ into a node.

We are interested in a generalization of this construction that gives a deformation of the nodal curve $C_0$. For this we fix an additional data, namely, the formal parameters
$x_1$ and $x_2$ at $p_1$ and $p_2$, respectively.


We also assume that an element $q\in R$ is fixed such that $q^{n+1}=0$. Let us denote by $C_{0,R/q}\to \Spec(R/q)$ the curve obtained from $C_0$ by the base change $R\to R/q$.
We will construct a family of curves $\CC$ over $R$, deforming $C_{0,R/q}$.
Loosely speaking, $\CC$ will be glued from 
$\wt{C}_0\setminus\{p_1,p_2\}$
and $\Spf(R[\![x_1,x_2]\!]/(x_1x_2-q))$ along $\Spf(R(\!(x_1)\!)\oplus R(\!(x_2)\!))$, i.e., 
the union of formal punctured disks in $\wt{C}_0$ around $p_1$ and $p_2$.

Let us give a more precise construction of $\CC$. 
We use the homomorphism of $R$-algebras,
$$\iota:A(R)=R[\![x_1,x_2]\!]/(x_1x_2-q)\to R(\!(x_1)\!)\oplus R(\!(x_2)\!): f(x_1,x_2)\mapsto (f(x_1,q/x_1), f(q/x_2,x_2))$$
(see \eqref{iota-def}).
Recall that by Lemma \ref{A(R)-basic-lem}, $\iota$ is injective and
\begin{equation}\label{im-iota-inclusion-eq}
R[\![x_1]\!]x_1^{n+1}\oplus R[\![x_2]\!]x_2^{n+1}\sub \im(\iota).
\end{equation}

Let $U\sub \wt{C}_0$ be an affine neighborhood of $\{p_1,p_2\}$ in $\wt{C}_0$. 
The expansion in formal parameters gives a homomorphism
$$\kappa: \OO(U\setminus \{p_1,p_2\})\to R(\!(x_1)\!)\oplus R(\!(x_2)\!).$$

\begin{lemma}\label{Dn-flat-lem}
The map
$$\OO(U\setminus \{p_1,p_2\})\to (R(\!(x_1)\!)\oplus R(\!(x_2)\!))/\im(\iota)$$
induced by $\kappa$, is surjective.
\end{lemma}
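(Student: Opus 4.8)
The plan is to reduce the statement to a question about coherent cohomology on $\wt{C}_0$. First I would note that by \eqref{im-iota-inclusion-eq}, the target $(R(\!(x_1)\!)\oplus R(\!(x_2)\!))/\im(\iota)$ is a quotient of $R(\!(x_1)\!)/R[\![x_1]\!]x_1^{n+1}\oplus R(\!(x_2)\!)/R[\![x_2]\!]x_2^{n+1}$, so it suffices to prove surjectivity after replacing the target by this larger module; equivalently, I want the expansion map $\OO(U\setminus\{p_1,p_2\})\to \bigoplus_i R(\!(x_i)\!)/R[\![x_i]\!]x_i^{n+1}$ to be surjective. The right-hand side is exactly the module of principal parts (of order up to $n$) for functions with poles along $p_1\cup p_2$, i.e., $H^0$ of the skyscraper-type sheaf $\bigoplus_i \OO_{\wt{C}_0}((n+1)p_i)/\OO_{\wt{C}_0}$ restricted appropriately; more precisely it is $H^0\big(\wt{C}_0, \OO((n+1)(p_1+p_2))/\OO\big)$ after identifying the formal completion with the $x_i$-adic one (here I use that $\pi$ is smooth along $p_1,p_2$, so the formal parameters $x_i$ trivialize the completions).

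Next I would invoke the long exact sequence in cohomology coming from
\begin{equation*}
0\to \OO_{\wt{C}_0}\to \OO_{\wt{C}_0}((n+1)(p_1+p_2))\to \OO_{\wt{C}_0}((n+1)(p_1+p_2))/\OO_{\wt{C}_0}\to 0.
\end{equation*}
Restricting to the affine open $U$ kills the $H^1$ term: indeed $H^1(U,\OO_U)=0$ since $U$ is affine. So $H^0(U,\OO_U((n+1)(p_1+p_2)))\to H^0(U, \text{quotient})$ is surjective, and the source is precisely $\OO(U\setminus\{p_1,p_2\})\cap \{\text{poles of order}\le n+1\}\subseteq \OO(U\setminus\{p_1,p_2\})$. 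This gives the desired surjectivity onto $\bigoplus_i R(\!(x_i)\!)/R[\![x_i]\!]x_i^{n+1}$, hence a fortiori onto the quotient by $\im(\iota)$.

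The one genuine subtlety — and the step I expect to be the main obstacle — is justifying that the algebraic principal-parts sheaf computed above really agrees with the analytic/formal expansion map $\kappa$ built from the chosen formal parameters, and that this comparison is compatible with base change in $R$ (the family is only flat, not smooth, away from $p_1,p_2$, and $R$ is an arbitrary $\Z[q]/(q^{n+1})$-algebra). Concretely I would argue as follows: $\OO((n+1)(p_1+p_2))/\OO$ is a coherent sheaf supported on the sections $p_1\sqcup p_2$, which are contained in the smooth locus and are closed subschemes isomorphic to $\Spec R$; the choice of formal parameter $x_i$ identifies the $p_i$-adic completion of $\OO_{\wt{C}_0}$ with $R[\![x_i]\!]$, and under this identification $\OO((n+1)p_i)/\OO$ becomes the free $R$-module $x_i^{-n-1}R[\![x_i]\!]/R[\![x_i]\!]\cong \bigoplus_{k=1}^{n+1}R\cdot x_i^{-k}$, with the map from global sections of $\OO((n+1)p_i)$ given exactly by Laurent expansion, i.e., by $\kappa$. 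Since everything in sight ($U$ affine, the sheaves coherent, the sections in the smooth locus) is stable under arbitrary base change $\Z[q]/(q^{n+1})\to R$ and the relevant $H^1$ vanishes universally, no flatness beyond what is assumed is needed. I would write this out carefully but briefly, then conclude by composing with the surjection onto the quotient by $\im(\iota)$.
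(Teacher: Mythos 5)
Your overall plan (use \eqref{im-iota-inclusion-eq} to replace the target by a quotient-free module, then reduce to exactness of global sections of quasi-coherent sheaves on the affine $U$, with the formal parameters $x_i$ identifying the completions along the sections) is sound and close in spirit to the paper's proof, but the key identification in your argument is wrong, and this creates a genuine gap. The module $R(\!(x_i)\!)/R[\![x_i]\!]x_i^{n+1}$ is \emph{not} $H^0$ of $\OO_{\wt{C}_0}((n+1)p_i)/\OO_{\wt{C}_0}$: the latter is a free $R$-module of rank $n+1$ consisting of polar parts of order at most $n+1$, while the former has infinite rank --- it contains the classes of $x_i^{-k}$ for \emph{every} $k\ge 1$ (poles of arbitrary order) and also records the Taylor coefficients in degrees $0,\dots,n$. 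Consequently, the sequence $0\to\OO\to\OO((n+1)(p_1+p_2))\to\OO((n+1)(p_1+p_2))/\OO\to 0$ together with affineness of $U$ only shows that any prescribed polar part of order $\le n+1$ can be realized by a function on $U\setminus\{p_1,p_2\}$ \emph{up to something regular at $p_1,p_2$}. That does not imply the lemma: since $\im(\iota)\sub x_1^{-n}R[\![x_1]\!]\oplus x_2^{-n}R[\![x_2]\!]$ (the coefficient of $x_1^{-N}$ in $\iota(f)$ is a multiple of $q^N=0$ for $N>n$), a class such as that of $(x_1^{-N},0)$ with $N>n+1$ is nonzero in $(R(\!(x_1)\!)\oplus R(\!(x_2)\!))/\im(\iota)$ and is simply not reached by your argument; likewise classes coming from degrees $0\le k\le n$, e.g.\ $(1,0)\notin\im(\iota)$, and the ``up to regular functions'' ambiguity are not dealt with, since $R[\![x_1]\!]\oplus R[\![x_2]\!]$ is not contained in $\im(\iota)$.

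The gap is fixable along the lines you intended, and then your route genuinely differs from (and slightly simplifies) the paper's: for every $N$ use on $U$ the exact sequence $0\to\OO_U(-(n+1)(p_1+p_2))\to\OO_U(N(p_1+p_2))\to Q_N\to 0$; since $Q_N$ is supported on a finite infinitesimal neighborhood of $\{p_1,p_2\}$ inside the smooth locus, the parameters $x_i$ identify $H^0(U,Q_N)$ with $\bigoplus_i x_i^{-N}R[\![x_i]\!]/x_i^{n+1}R[\![x_i]\!]$, compatibly with $\kappa$ on $H^0(U,\OO_U(N(p_1+p_2)))\sub \OO(U\setminus\{p_1,p_2\})$. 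Vanishing of $H^1$ of quasi-coherent sheaves on the affine $U$ gives surjectivity for each $N$, and passing to the colimit over $N$ yields the stronger surjectivity onto $\bigoplus_i R(\!(x_i)\!)/R[\![x_i]\!]x_i^{n+1}$ you were after, with no flatness or base-change input and no need to analyze $\im(\iota)$ beyond \eqref{im-iota-inclusion-eq}. The paper instead keeps the quotient by $\im(\iota)$: it bounds poles using $\im(\iota)\sub x_1^{-n}R[\![x_1]\!]\oplus x_2^{-n}R[\![x_2]\!]$, notes each truncated target is a finitely generated $R$-module so that nilpotence of $q$ reduces the claim to $q=0$, and then uses surjectivity of $\OO_U(Np_1+Np_2)\to\OO_U(Np_1+Np_2)/\OO_U(-p_1-p_2)$ on sections. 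Your corrected argument avoids the reduction modulo $q$ at the cost of proving a slightly stronger statement; either is fine, but as written your proof does not establish the lemma.
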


\begin{proof} 
First, we note that the image of $\iota$ is contained in $x_1^{-n}R[\![x_1]\!]\oplus x_2^{-n}R[\![x_2]\!]$. Thus, it is enough to check that for each $N\ge n$,
the map
$$H^0(U,\OO_U(Np_1+Np_2))\to x_1^{-N} R[\![x_1]\!]\oplus x_2^{-N} R[\![x_2]\!]/\im(\iota)$$
is surjective. Note that due to the inclusion \eqref{im-iota-inclusion-eq}, the target of the above map is finitely generated as $R$-module.
Thus, it is enough to check surjectivity modulo $q$. But then the required surjectivity follows 
from surjectivity of the map on global sections induced by the morphism of coherent sheaves on $U$,
$$\OO_U(Np_1+Np_2)\to \OO_U(Np_1+Np_2)/\OO_U(-p_1-p_2).$$
\end{proof}

Now let us define the $R$-subalgebra $A$ in $\OO(U\setminus\{p_1,p_2\})$ as the following fibered product:
\begin{equation}\label{A-fibered-product-diag}
\begin{diagram}
A&\rTo{}& \OO(U\setminus\{p_1,p_2\})\\
\dTo{\wt{\kappa}}&&\dTo{\kappa}\\
R[\![x_1,x_2]\!]/(x_1x_2-q)&\rTo{\iota}&R(\!(x_1)\!)\oplus R(\!(x_2)\!)
\end{diagram}
\end{equation}

\begin{lemma}\label{Dn-localization-lem} Let $f\in \OO(U)$ be such that $\kappa(f)=(x_1^{n+1}+\ldots,x_2^{n+1}+\ldots)$. Then $f\in A$ and
the localization $A[f^{-1}]$ is naturally isomorphic to $\OO(U\setminus Z(f))$, where $Z(f)$ is the divisor of zeros of $f$. Locally near $\{p_1,p_2\}$,
the divisor $Z(f)$ is supported on $\{p_1,p_2\}$.
\end{lemma}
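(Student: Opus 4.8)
The plan is to realize $A$ as $\kappa^{-1}(\im\iota)$ inside $B:=\OO(U\setminus\{p_1,p_2\})$. This is legitimate: $\iota$ is injective by Lemma~\ref{A(R)-basic-lem}, so the fibered product \eqref{A-fibered-product-diag} is identified with its image under the projection to $B$. That $f\in A$ is then immediate, since $f$ is regular along $p_1$ and $p_2$, so the hypothesis places $\kappa(f)$ in $R[\![x_1]\!]x_1^{n+1}\oplus R[\![x_2]\!]x_2^{n+1}$, which lies in $\im\iota$ by \eqref{im-iota-inclusion-eq}. Note also that $\kappa(f)$ has positive order at each $p_i$, so $\{p_1,p_2\}\sub Z(f)$ and $U\setminus Z(f)\sub U\setminus\{p_1,p_2\}$; the isomorphism to be established is the one induced on $A[f^{-1}]$ by the restriction $A\hra B\to\OO(U\setminus Z(f))$.

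The core of the proof is the pair of inclusions, inside $B$ (and here $\OO(U)\hra B$ because $\{p_1,p_2\}$ is an effective Cartier divisor on $U$),
$$f\cdot\OO(U)\ \sub\ A,\qquad f\cdot A\ \sub\ \OO(U).$$
For the first, if $g\in\OO(U)$ then $\kappa(g)\in R[\![x_1]\!]\oplus R[\![x_2]\!]$, so $\kappa(fg)=\kappa(f)\kappa(g)\in R[\![x_1]\!]x_1^{n+1}\oplus R[\![x_2]\!]x_2^{n+1}\sub\im\iota$ and $fg\in A$. For the second, recall from the proof of Lemma~\ref{Dn-flat-lem} that $\im\iota\sub x_1^{-n}R[\![x_1]\!]\oplus x_2^{-n}R[\![x_2]\!]$, hence for $a\in A$ one has $\kappa(fa)=\kappa(f)\kappa(a)\in x_1R[\![x_1]\!]\oplus x_2R[\![x_2]\!]$; thus the expansion of $fa$ at each $p_i$ is regular, and $fa$ --- a priori only a section over $U\setminus\{p_1,p_2\}$ --- extends to a section over $U$ once we verify that a function on $U\setminus\{p_1,p_2\}$ with pole-free expansions at $p_1,p_2$ is regular there. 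Granting this, $\OO(U)[f^{-1}]=A[f^{-1}]$; since $U$ is affine, this equals $\OO(U\setminus Z(f))$, which is the main assertion.

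For the remaining extension claim and for the statement about $Z(f)$, choose disjoint affine opens $V_i\ni p_i$ in $U$ on which the formal parameter $x_i$ is represented by a regular function. Since $\pi$ is smooth along $p_i$ and $p_i$ is a section, after shrinking $V_i$ this function generates the ideal $I_{p_i}$, it is a non-zero-divisor in $\OO(V_i)$, and the expansion map $\OO(V_i)\to R[\![x_i]\!]$ reduces modulo $x_i$ to an isomorphism $\OO(V_i)/x_i\OO(V_i)\cong R$; in particular an element of $\OO(V_i)$ lies in $x_i\OO(V_i)$ iff its expansion has vanishing constant term. Writing $fa|_{V_i}=c/x_i^k$ in $\OO(V_i)[x_i^{-1}]=\OO(V_i\setminus p_i)$ with $k$ minimal: if $k\ge 1$ then $c\notin x_i\OO(V_i)$, so its expansion has nonzero constant term and $c/x_i^k$ has a genuine pole, contradicting the regularity of the expansion of $fa$; hence $k=0$, $fa|_{V_i}\in\OO(V_i)$, and gluing over $i=1,2$ gives $fa\in\OO(U)$. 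For $Z(f)$: since $I_{p_i}=(x_i)$ near $p_i$ and $\kappa(f)=(x_1^{n+1}+\ldots,\,x_2^{n+1}+\ldots)$, the successive quotients $f=x_if_1=x_i^2f_2=\cdots=x_i^{n+1}f_{n+1}$ exist in $\OO(V_i)$ (each division is legitimate because $x_i$ is a non-zero-divisor and $f_j$ vanishes along $p_i$ for $0\le j\le n$), with $f_{n+1}\equiv 1\pmod{x_i}$; so $f_{n+1}$ is invertible on $W_i:=V_i\setminus Z(f_{n+1})$, an open neighborhood of $p_i$, and on $W_i$ we have $(f)=(x_i^{n+1})$, whence $Z(f)$ is supported on $\{p_i\}$ near $p_i$.

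The one genuinely non-formal step is this passage from a pole-free formal expansion at $p_i$ to regularity on a Zariski neighborhood, i.e.\ the comparison of pole orders in $\OO(V_i)[x_i^{-1}]$ with those in $R(\!(x_i)\!)$. The argument above makes that comparison without any noetherian hypothesis on $R$, using only that $x_i$ is a non-zero-divisor in $\OO(V_i)$ and that $\OO(V_i)/x_i\OO(V_i)=R$ --- both of which hold because $p_i$ is a section contained in the smooth locus of $\pi$ --- in place of Krull's intersection theorem.
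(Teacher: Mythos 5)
Your route is genuinely different from the paper's and, apart from one misstated step, it works. The paper proves the main assertion by localizing the cartesian square \eqref{A-fibered-product-diag} at $f$: since $\kappa(f)=(x_1^{n+1}+\ldots,x_2^{n+1}+\ldots)$ is invertible in $R(\!(x_1)\!)\oplus R(\!(x_2)\!)$ and, by \eqref{im-iota-inclusion-eq}, $\kappa(f)^N\cdot(p,q)\in\im(\iota)$ for $N$ large, the bottom arrow of the square becomes an isomorphism after inverting $f$, hence so does the top arrow $A[f^{-1}]\to\OO(U\setminus\{p_1,p_2\})[f^{-1}]=\OO(U\setminus Z(f))$; the support statement is then reduced to $q=0$ and to the remark that the ideal inclusion $\OO_U(-(n+1)p_1-(n+1)p_2)\sub (f)$ holds formally. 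Your sandwich $f\cdot\OO(U)\sub A$ and $f\cdot A\sub\OO(U)$ is an equally valid and arguably more transparent way to get $A[f^{-1}]=\OO(U)[f^{-1}]=\OO(U\setminus Z(f))$, and your explicit treatment of the passage from a pole-free formal expansion to Zariski-local regularity, together with the divisibility $f=t^{n+1}\cdot(\text{unit})$ near $p_i$, makes precise, without noetherian hypotheses, exactly the point that the paper's last sentence leaves implicit.

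The one step that is wrong as written is the choice of affine opens $V_i\ni p_i$ ``on which the formal parameter $x_i$ is represented by a regular function'': a formal parameter is an arbitrary topological generator of the ideal of $p_i$ in $\hat{\OO}_{C,p_i}\simeq R[\![x_i]\!]$ and is in general not algebraizable, so no such $V_i$ exists. Fortunately nothing in your argument needs $x_i$ itself: it suffices to take any $t_i\in\OO(V_i)$ generating the ideal $I_{p_i}$ on an affine neighborhood $V_i$ of (the image of) $p_i$; such a $t_i$ exists because $p_i$ is a section through the smooth locus of $\pi$, hence a relative effective Cartier divisor whose conormal is trivialized by $x_i$, and $U$ is affine. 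The expansion of $t_i$ equals $x_i$ times a unit of $R[\![x_i]\!]$, so every comparison you make --- the criterion that an element of $\OO(V_i)$ lies in $t_i\OO(V_i)$ iff the constant term of its expansion vanishes, the minimal-$k$ pole argument, and the successive divisions leading to $f=t_i^{n+1}f_{n+1}$ --- goes through verbatim; the only cosmetic change is that $f_{n+1}$ is congruent along $p_i$ to a unit of $R$ rather than to $1$, which is all you need for its invertibility near $p_i$.
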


\begin{proof}
For the first assertion, it is enough to check that the map obtained from $\iota$ by localization with respect to $f$ is an isomorphism.
Since the multiplication by $\iota(f)$ is invertible on $R(\!(x_1)\!)\oplus R(\!(x_2)\!)$, we have to show that for every
element $(p,q)\in R(\!(x_1)\!)\oplus R(\!(x_2)\!)$, one has $\iota(f^N)(p,q)\in \im(\iota)$ for some $N$. But this immediately follows from the inclusion \eqref{im-iota-inclusion-eq}.

For the last assertion, we can assume that $q=0$. It is enough to check that locally near $\{p_1,p_2\}$, one has an inclusion of ideals $\OO_U(-(n+1)p_1-(n+1)p_2)\sub (f)$.
But this follows from the fact that this inclusion holds in a formal neighborhood of $\{p_1,p_2\}$. 
\end{proof}

Lemma \ref{Dn-localization-lem} implies that one can shrink $U$ so that $Z(f)$ is supported at $\{p_1,p_2\}$ and the morphism
$$U\setminus \{p_1,p_2\}\to \Spec(A)$$ 
is an open embedding. Therefore, we can define the $R$-scheme $\CC$ by gluing two open subsets, $\wt{C}_0\setminus\{p_1,p_2\}$
and $\Spec(A)$ along $U\setminus \{p_1,p_2\}$.

\begin{prop}\label{clutch-constr-prop}
The scheme $\CC=\CC(\wt{C}_0,p_1,p_2,x_1,x_2;q)$ is flat and proper over $\Spec(R)$. It does not depend on a choice of an affine neighborhood $U$ up to a canonical isomorphism.
Given a homomorphism $\phi:R\to R'$, setting $q'=\phi(q)$, we have a natural isomorphism
$$\CC(\wt{C}_0\times_{\Spec R}\Spec R',p'_1,p'_2;q')\simeq \CC(\wt{C}_0,p_1,p_2;q)\times_{\Spec R} \Spec R',$$
where $p'_1$ and $p'_2$ are obtained from $p_1$ and $p_2$ by the base change. In particular, the base change $\CC\times_{\Spec R} \Spec R/q$ gives
the curve $C_{0,R/q}$ obtained by gluing the marked points $p_1$ and $p_2$ in $\wt{C}_{0,R/q}$ into a node $\nu\in C_{0,R/q}$.
\end{prop}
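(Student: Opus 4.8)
The plan is to verify each assertion locally, reducing global statements to the already-established behaviour of the fibered product \eqref{A-fibered-product-diag} and the map $\iota$. First I would address independence of the choice of $U$: if $U' \subset U$ is a smaller affine neighborhood of $\{p_1,p_2\}$, then by Lemma \ref{Dn-localization-lem} there is an element $f \in \OO(U)$ with $\OO(U')$ a localization of $\OO(U)$ away from a divisor supported outside $\{p_1,p_2\}$ (after shrinking), and the two fibered products $A_U$ and $A_{U'}$ differ by the corresponding localization — the $\iota$-side of the diagram is unchanged since the formal punctured disks around $p_1,p_2$ are intrinsic. Thus $\Spec(A_{U'})$ is an open subscheme of $\Spec(A_U)$ containing the locus where the gluing takes place, and the glued schemes agree canonically. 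This also handles comparing any two neighborhoods via a common refinement.

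Next, flatness and properness. Flatness over $R$ is local on $\CC$; it holds on $\wt C_0 \setminus \{p_1,p_2\}$ by hypothesis, so it suffices to check it on $\Spec(A)$. Here I would use the fibered-product description: $A$ sits in the exact sequence $0 \to A \to \OO(U\setminus\{p_1,p_2\}) \oplus A(R) \to (R(\!(x_1)\!)\oplus R(\!(x_2)\!))/\text{nothing}$ — more precisely, Lemma \ref{Dn-flat-lem} gives that $\kappa$ composed with projection to the cokernel of $\iota$ is surjective, so $A \to \OO(U\setminus\{p_1,p_2\})$ and $A \to A(R)$ fit into a Mayer–Vietoris-type sequence whose third term $(R(\!(x_1)\!)\oplus R(\!(x_2)\!))/\im(\iota)$ is, by the inclusion \eqref{im-iota-inclusion-eq} and the proof of Lemma \ref{A(R)-basic-lem}(i), a free $R$-module. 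Since $\OO(U\setminus\{p_1,p_2\})$ is $R$-flat (it is an open in a flat family), $A(R)=R[\![x_1,x_2]\!]/(x_1x_2-q)$ is $R$-flat by Lemma \ref{A(R)-basic-lem}(ii) (it is a direct summand of a product of power series rings over $R$), and the cokernel term is free, a diagram chase shows $A$ is $R$-flat. For properness I would check the valuative criterion, or more simply observe that modulo $q$ the scheme $\CC$ is exactly $C_{0,R/q}$ (see the last assertion, proved below), which is proper over $R/q$; since $q$ is nilpotent and properness is insensitive to nilpotent thickenings of the base, $\CC$ is proper over $R$. Alternatively one exhibits $\CC$ as covered by $\wt C_0 \setminus \{p_1,p_2\}$ together with $\Spec(A)$, and $A$ is finite over a suitable polynomial subring coming from sections of a power of $\OO(p_1+p_2)$ — but the nilpotent-thickening argument is cleanest.

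For base change along $\phi : R \to R'$, the key point is that forming the fibered product \eqref{A-fibered-product-diag} commutes with $\otimes_R R'$. This requires knowing that the relevant $\Tor$-terms vanish, which follows from the flatness just established together with the freeness of $(R(\!(x_1)\!)\oplus R(\!(x_2)\!))/\im(\iota)$: tensoring the short exact sequence defining $A$ with $R'$ stays exact, and $A(R)\otimes_R R' = A(R')$ (again by Lemma \ref{A(R)-basic-lem}), while $\OO(U\setminus\{p_1,p_2\})\otimes_R R'$ is the structure ring of the base-changed open (as $\wt C_0$ is flat and $U$ affine). So $A \otimes_R R' = A'$, and the gluing is manifestly compatible with base change, giving the asserted isomorphism of $R'$-schemes. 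Specializing to $R' = R/q$: then $q'=0$, $A(R/q) = R/q[\![x_1]\!]\times R/q[\![x_2]\!]$ is the disjoint union of two formal disks, and the fibered product glues $\wt C_{0,R/q}\setminus\{p_1,p_2\}$ to these two disks along the punctured disks identified via the identity — i.e. it reinserts the points $p_1,p_2$ — but with $x_1x_2 = 0$ imposed at the center, so the two disks are joined at a single point forming a node $\nu$. Thus $\CC \times_{\Spec R}\Spec R/q \cong C_{0,R/q}$, the nodal curve obtained by clutching $p_1$ and $p_2$.

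I expect the main obstacle to be the flatness of $A$ over $R$: one must correctly identify the three-term exact sequence coming from the fibered square and argue that the ``defect'' term $(R(\!(x_1)\!)\oplus R(\!(x_2)\!))/\im(\iota)$ is $R$-free of the right form, which is exactly what the proofs of Lemmas \ref{A(R)-basic-lem} and \ref{Dn-flat-lem} were set up to give; everything else (independence of $U$, base change, the description modulo $q$) is then a matter of unwinding the fibered-product construction and invoking flatness to commute it with localization and with $\otimes_R R'$.
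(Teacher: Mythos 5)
Your treatment of flatness and of base change is essentially the paper's argument: flatness of $A$ comes from the exact sequence $0\to A\to \OO(U\setminus\{p_1,p_2\})\to \coker(\iota)\to 0$ (surjectivity being Lemma \ref{Dn-flat-lem}, with $\coker(\iota)$ a free $R$-module), and base change follows because the bottom row of \eqref{A-fibered-product-diag} has free cokernel, so the square stays cartesian after $\ot_R R'$. The genuine gaps are in the other two assertions. For independence of $U$, your key claim --- that $\OO(U')$ is (after shrinking) a localization of $\OO(U)$, so that $\Spec(A_{U'})$ is open in $\Spec(A_U)$ and the glued curves ``agree canonically'' --- is not justified: an affine open $U'\subset U$ need not be a principal localization, and Lemma \ref{Dn-localization-lem} does not provide such an element (its $f$ has expansion $(x_1^{n+1}+\ldots,x_2^{n+1}+\ldots)$, so its zero divisor sits \emph{at} $\{p_1,p_2\}$, not away from them). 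The paper's trick is to choose $g\in\OO(U)$ vanishing on $U\setminus U'$ with $\kappa(g)=(1+O(x_1^{n+1}),1+O(x_2^{n+1}))$; this is what guarantees $g\in A_U$ and identifies $A_U[g^{-1}]$ with the fibered product attached to $U\setminus Z(g)\subset U'$. Even granting that, one still must show the comparison maps of glued curves are isomorphisms and not merely open immersions: the paper uses that a proper open immersion is the inclusion of a connected component, checks surjectivity modulo $q$, and then runs a two-step argument to conclude that $\CC(U')\to\CC(U)$ is an isomorphism. None of this is in your sketch.

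For properness you reduce to $q=0$ (as does the paper), but you then lean on the identification $\CC\times_{\Spec R}\Spec R/q\simeq C_{0,R/q}$, which you only describe informally and with an error: $A(R/q)=(R/q)[\![x_1,x_2]\!]/(x_1x_2)$ is not $(R/q)[\![x_1]\!]\times(R/q)[\![x_2]\!]$ but the subring of pairs of power series with equal constant term. The substantive point needing proof is that at $q=0$ the fibered product satisfies $A=\{\phi\in\OO(U)\,:\,p_1^*\phi=p_2^*\phi\}$, i.e.\ that the formal gluing really computes the scheme-theoretic clutching; the paper proves exactly this and obtains properness as a byproduct, since it yields a finite affine morphism $\wt{C}_0\to\CC$ over $R/q$ (via $\OO(U)=A\oplus R\cdot\phi_0$), and properness of $\CC$ then follows from properness of $\wt{C}_0$. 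Your route also silently requires knowing that $\CC\to\Spec(R)$ is of finite type before ``properness is insensitive to nilpotent thickenings'' can be invoked. So the overall skeleton is right and two of the four assertions are proved as in the paper, but the independence-of-$U$ argument and the identification of the special fiber (and hence properness) are missing the constructions indicated above.
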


\begin{proof}
To prove flatness it is enough to check that $A$ is flat over $R$. But this follows from the exact sequence of $R$-modules
$$0\to A\to \OO(U\setminus\{p_1,p_2\})\to \coker(\iota)\to 0,$$
where surjectivity follows from Lemma \ref{Dn-flat-lem}. Since $\coker(\iota)$ is a free $R$-module, it follows that $A$ is flat over $R$.

Next, to show properness, it is enough to consider the corresponding family over $\Spec(R/q)$, i.e., we can assume that $q=0$.
In this case the image of $\iota$ is contained in $R[\![x_1]\!]\oplus R[\![x_2]\!]\sub R(\!(x_1)\!)\oplus R(\!(x_2)\!)$, so $A$ is contained in
$\OO(U)\sub \OO(U\setminus\{p_1,p_2\})$. This implies that
we have an affine morphism $f:\wt{C}_0\to \CC$, which is an isomorphism over $\wt{C}_0\setminus \{p_1,p_2\}\sub \CC$.
It is enough to check that in fact $f$ is a finite morphism. Indeed, we need to check that $\OO(U)$ is a finite $A$-module. 
From the definition of $\iota$, we see that $A$ is exactly the subring of $\phi\in \OO(U)$ such that $p_1^*\phi=p_2^*\phi\in R$.
Thus, choosing a function $\phi_0\in \OO(U)$ such that $p_1^*\phi_0=1$ and $p_2^*\phi_0=0$ (such $\phi_0$ exists since $U$ is affine),
we obtain
$$\OO(U)=A\oplus R\cdot \phi_0,$$
which implies that $\OO(U)$ is a finite $A$-module, as claimed.

Let us see what happens if we replace $U$ by a smaller neighborhood $U'\sub U$ of $\{p_1,p_2\}$. 
Let us denote the corresponding glued curve by $\CC(U')$. Note that we have a natural morphism $\CC(U')\to \CC=\CC(U)$.

We can choose a function $g$ on $U$ such that $g|_{U\setminus U'}=0$ and 
$$\kappa(g)=(1+O(x_1^{n+1}),1+O(x_2^{n+1})).$$
This implies that $g$ belongs $A$ and the localization $A_g$ fits into a fibered product similar to \eqref{A-fibered-product-diag}, but with
$U$ replaced by $U\setminus Z(g)\sub U'$, where $Z(g)$ is the zero locus of $g$. 
Hence, the morphism $\CC(U\setminus Z(g))\to \CC$ is an open embedding. 
Since it is also proper, it is an embedding of a connected component. Looking at the induced map over $R/q$, we see that it is an isomorphism.
Thus, the map $\CC(U')\to \CC$ has the right inverse $\CC\simeq \CC(U\setminus Z(g))\to \CC(U')$. Repeating the same argument
with $U$ replaced by $U'$ and $U'$ replaced by $U\setminus Z(g)$, we see that $\CC(U')\to \CC$ is an isomorphism.

Finally, to show the compatibility with the base change $R\to R'$, it is enough to check the corresponding base change of the diagram \eqref{A-fibered-product-diag}
is still cartesian. But this follows from the fact that the bottom arrow of this diagram can be replaced by a map of free $R$-modules, upon taking quotients by
$(x_1^{n+1},x_2^{n+1})$.
\end{proof}

Note that we have a natural closed embedding $\nu:\Spec(R/q)\to C_{0,R/q}\sub \CC$, which corresponds to the composed homomorphism
$$A\rTo{\wt{\kappa}} R[\![x_1,x_2]\!]/(x_1x_2-q)\to R/q$$
sending $x_1$ and $x_2$ to $0$.
The curve $\CC$ comes with a flat covering $U_1\sqcup U_2\to \CC$, where $U_1=\wt{C}_0\setminus\{p_1,p_2\}$, and
$U_2=\Spf(R[\![x_1,x_2]\!]/(x_1x_2-q))$, where we identify $U_2$ with the formal neighborhood of of the node $\nu$.

From the above construction we obtain morphisms
\begin{equation}\label{ext-clutch-mor-1}
\ov{\MM}_{g_1,1}^{(\infty)}\times \ov{\MM}^{(\infty)}_{g_2,1}\times \Spec \Z[q]/(q^{n+1})\to \ov{\MM}_{g},
\end{equation}
\begin{equation}\label{ext-clutch-mor-2}
\ov{\MM}^{(\infty)}_{g-1,2}\times\Spec \Z[q]/(q^{n+1})\to \ov{\MM}_g,
\end{equation}
for $g_1\ge 1$, $g_2\ge 1$, $g_1+g_2=g\ge 2$. Namely, for a ring $R$, an $R$-object of $\ov{\MM}_{g_1,1}^{(\infty)}\times \ov{\MM}^{(\infty)}_{g_2,1}\times \Spec \Z[q]/(q^{n+1})$
gives a pair of curves with marked points $(C_1,p_1)$, $(C_2,p_2)$ over $R$ and an element $q_R\in R$ such that $q_R^{n+1}=0$. Thus, we can apply the above
construction to the curve $\wt{C}_0=C_1\sqcup C_2$, the marked points $p_1,p_2$, and the element $q_R$. The corresponding glued curve $\CC$ over $R$ will be stable of genus $g$, so
we obtain an object of $\ov{\MM}_g(R)$. The definition of the second morphism is similar.

\subsection{Recovering the gluing data}

\begin{prop}\label{gluing-data-equivalence-prop}
The flat proper family of curves, $\CC=\CC(\wt{C}_0,p_1,p_2,x_1,x_2;q)$, constructed in Proposition \ref{clutch-constr-prop}, 
is equipped with a closed embedding $\nu:\Spec(R/(q))\hra \CC$ of $R$-schemes and an isomorphism of $R$-algebras
$$\eta:R[\![x_1,x_2]\!]/(x_1x_2-q)\rTo{\sim} \OO(\hat{\CC}_\nu),$$
compatible with homomorphisms to $R/(q)$,
where $\hat{\CC}_\nu$ is the completion of $\CC$ along $\nu(\Spec(R/q))$. 
In other words, the ideal of the image of $\nu$ corresponds under $\eta$ to the ideal $(x_1,x_2)$. 
The correspondence
$$(\wt{C}_0,p_1,p_2,x_1,x_2)\mapsto (\CC,\nu,\eta)$$
is an equivalence of categories fibered over the category of $\Z/(q^{n+1})$-algebras.
\end{prop}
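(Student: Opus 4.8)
\emph{Proof proposal.} The plan is to produce a quasi-inverse to the functor $F$ of Proposition~\ref{clutch-constr-prop} that sends a datum $(\wt{C}_0,p_1,p_2,x_1,x_2)$ over a $\Z[q]/(q^{n+1})$-algebra $R$ to the triple $(\CC,\nu,\eta)$, where $\CC=\CC(\wt{C}_0,p_1,p_2,x_1,x_2;q)$, the closed embedding $\nu\colon\Spec(R/(q))\hra\CC$ is the one described after Proposition~\ref{clutch-constr-prop}, and $\eta$ is the isomorphism of $A(R)=R[\![x_1,x_2]\!]/(x_1x_2-q)$ with $\OO(\hat{\CC}_\nu)$ coming from the flat covering $U_1\sqcup U_2\to\CC$ with $U_2=\Spf A(R)$. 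That $F$ is a well-defined functor compatible with base change along $R\to R'$ is exactly Proposition~\ref{clutch-constr-prop} together with the remarks following it; the one extra point is that the ideal of the image of $\nu$ corresponds under $\eta$ to $(x_1,x_2)$, which is immediate from the description of $\nu$ as the composite $A\to A(R)\to R/(q)$ killing $x_1$ and $x_2$.

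First I would build the quasi-inverse $G$ by \emph{un-gluing the node}. Given $(\CC,\nu,\eta)$, the parameters $x_1,x_2\in\OO(\hat{\CC}_\nu)$ (transported by $\eta$) determine, via the natural identifications $A(R)[x_i^{-1}]\simeq R(\!(x_i)\!)$, two punctured formal disks $\Spec R(\!(x_1)\!)$ and $\Spec R(\!(x_2)\!)$ mapping into the open subscheme $\CC\setminus\nu(\Spec(R/(q)))$; they are disjoint because $q$ is nilpotent, so inverting both $x_1$ and $x_2$ in $A(R)$ gives the zero ring. I would then define $\wt{C}_0$ to be the scheme obtained by gluing $\CC\setminus\nu(\Spec(R/(q)))$ with two formal disks $\Spf R[\![x_1]\!]\sqcup\Spf R[\![x_2]\!]$ along these punctured disks, take $p_1,p_2$ to be the origins $x_1=0$ and $x_2=0$, retain $x_1,x_2$ as the formal parameters, and set $G(\CC,\nu,\eta):=(\wt{C}_0,p_1,p_2,x_1,x_2)$. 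The substantive claim here is that this gluing is represented by a scheme which is proper and flat over $R$ and smooth along $p_1,p_2$; I would prove it by running the argument of Proposition~\ref{clutch-constr-prop} in reverse — choosing an affine open $U\subset\wt{C}_0$ around $p_1,p_2$ whose ring of functions is pinned down by a fibered-product/complement description parallel to \eqref{A-fibered-product-diag}, checking the assertion over $R/(q)$ where it is the classical normalization of the node $\nu$ in $\CC_{R/(q)}$, and deducing flatness and properness over $R$ from the case of $R/(q)$ by the exact sequence of $R$-modules used in loc.\ cit., $q$ being nilpotent.

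Next I would verify that $F$ and $G$ are mutually quasi-inverse. For $G\circ F\simeq\id$: when $\CC=\CC(\wt{C}_0,p_1,p_2,x_1,x_2;q)$ one has $\CC\setminus\nu(\Spec(R/(q)))=\wt{C}_0\setminus\{p_1,p_2\}$ by construction, and comparison with the defining diagram \eqref{A-fibered-product-diag} shows that the two punctured formal disks recovered from $\eta$ are precisely those of $\wt{C}_0$ around $p_1,p_2$ with their given parameters, so regluing returns $(\wt{C}_0,p_1,p_2,x_1,x_2)$ up to a canonical isomorphism. For $F\circ G\simeq\id$: with $\wt{C}_0$ produced by $G$ from $(\CC,\nu,\eta)$, the curve $\CC(\wt{C}_0,\dots;q)$ is glued from $\wt{C}_0\setminus\{p_1,p_2\}=\CC\setminus\nu(\Spec(R/(q)))$ and $\Spec A$ along the same punctured disks, where $A$ is the fibered product of \eqref{A-fibered-product-diag}; but by the very construction of $G$ the structure sheaf of $\CC$ near $\nu$ is already this fibered product — the functions near $\nu$ whose $\eta$-expansions lie in $\im(\iota)$ — so the comparison morphism $\CC(\wt{C}_0,\dots;q)\to\CC$ is an isomorphism matching $\nu$ with $\nu$ and $\eta$ with $\eta$. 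Functoriality of $G$ in $(\CC,\nu,\eta)$ and its compatibility with base change along $R\to R'$ are then automatic, since deleting $\nu$, completing along $\nu$, inverting $x_1$ or $x_2$, and gluing all commute with pullback.

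The step I expect to be the main obstacle is the representability assertion inside the construction of $G$: that the formal gluing of $\CC\setminus\nu(\Spec(R/(q)))$ with two formal disks along punctured formal disks is an honest scheme $\wt{C}_0$, proper and flat over $R$ and smooth along $p_1,p_2$. Over $R/(q)$ this is nothing but the normalization of a nodal curve, but over $R$ — where $\CC$ is a genuine deformation of the nodal curve rather than a nodal curve itself — the algebraization must be done with care; as indicated, I would carry it out by the same fibered-product and nilpotence arguments that underlie Proposition~\ref{clutch-constr-prop}, in effect reconstructing the affine chart $U$ and its ring of functions directly.
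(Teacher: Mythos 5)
Your proposal is correct and follows essentially the same route as the paper: the paper's quasi-inverse is built exactly as you describe, by forming the fibered product of a localized affine chart around the node with $R[\![x_1]\!]\oplus R[\![x_2]\!]$ over $R(\!(x_1)\!)\oplus R(\!(x_2)\!)$, checking flatness and surjectivity modulo $q$ and using nilpotence of $q$, and then verifying that the two constructions are mutually inverse. The one point you defer to the remark after Proposition \ref{clutch-constr-prop} --- that $\eta$ really is an isomorphism onto $\OO(\hat{\CC}_\nu)$ --- is the content of the paper's Step 1 (an equivalence of the $I$-adic topology with an $(f_N)$-adic one on the chart $A$), a routine verification rather than a missing idea.
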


\begin{proof} 
{\bf Step 1}.
For the first statement we have to check that the map $\wt{\kappa}$ in the fibered square \eqref{A-fibered-product-diag} induces an isomorphism of
the completion of $A$ at the ideal $I=\wt{\kappa}^{-1}((x_1,x_2))$. 
Let us choose an element $f_N\in \OO(U\setminus \{p_1,p_2\})$ such that
$\kappa(f_N)=(x_1^N+\ldots,x_2^N+\ldots)$ for very large $N$. Then $f_N\in A$ and for any $a\in A$ we have 
$$\kappa(a\cdot f_N)\sub \im(\iota)\cdot \kappa(f_N)\sub R[\![x_1]\!]x_1^{n+1}\oplus R[\![x_2]\!]x_2^{n+1}\sub \iota((x_1,x_2)),$$
hence, $(f_N)\sub I$.
On the other hand, we have
$$\kappa\iota((x_1,x_2)^{2N})\sub (R[\![x_1]\!]x_1^{n+1}\oplus R[\![x_2]\!]x_2^{n+1})\cdot \kappa(f_N)\sub\kappa(A\cdot f_N).$$
Therefore, $I^{2N}\sub (f_N)$, and so the ideals $(f_N)$ and $I$ define equivalent topologies on $A$.
Now if we pass to completions with respect to the $(f_N)$-adic topology in the cartesian square \eqref{A-fibered-product-diag}, the the map
$\kappa$ will become an isomorphism. Hence, $\wt{\kappa}$ will also be an isomorphism after completion, as claimed.

\noindent
{\bf Step 2}.
Now let us start with a flat proper family of curves $\CC$ over $\Spec(R)$, together with the data $(\nu,\eta)$ as in the statement.
Let $V=\Spec(A)$ be an open affine neighborhood of the image of $\nu$, and let $I\sub A$ be the ideal of $\nu(\Spec(R/q))$, so
that $\eta$ gives an isomorphism
$$R[\![x_1,x_2]\!]/(x_1x_2-q)\simeq \hat{A}_I=\varprojlim_n A/I^n.$$
In particular, for each $N>0$ we have a surjective map 
$$A\to A/I^N\simeq (R[\![x_1,x_2]\!]/(x_1x_2-q))/(x_1,x_2)^N.$$
We can pick $f\in A$ such that $f$ maps to $x_1^{n+1}+x_2^{n+1}$ under this map for $N>n+1$.
Then we have an induced map
$$A[f^{-1}]\to R[\![x_1,x_2]\!]/(x_1x_2-q)[f^{-1}]\simeq R(\!(x_1)\!)\oplus R(\!(x_2)\!)$$
(see the proof of Lemma \ref{Dn-localization-lem}).

Now let us define the algebra $B$ from the cartesian square
\begin{diagram}
B&\rTo{}& A[f^{-1}]\\
\dTo{}&&\dTo{}\\
R[\![x_1]\!]\oplus R[\![x_2]\!]&\rTo{}&R(\!(x_1)\!)\oplus R(\!(x_2)\!)
\end{diagram}
where the bottom horizontal arrow is the natural embedding.
To see that $B$ is flat over $R$, it is enough to check that the map
$$A[f^{-1}]\to (R(\!(x_1)\!)\oplus R(\!(x_2)\!))/(R[\![x_1]\!]\oplus R[\![x_2]\!])$$
is surjective. For this we can assume that $q=0$. For every $k>0$ there exists an element
$g\in A$ such that $g$ mapsto $x_1^{d(n+1)-k} \mod (x_1,x_2)^{d(n+1)}$ in $R[\![x_1,x_2]\!]/(x_1x_2,x_1^{d(n+1)},x_2^{d(n+1)})$.
It follows that $gf^{-d(n+1)}$ maps to an element in $x_1^{-k}+x_1^{-k+1}R[\![x_1]\!]+R[\![x_2]\!]$ in $R(\!(x_1)\!)\oplus R(\!(x_2)\!)$.
Similarly, there exists an element of $A[f^{-1}]$ mapping to an element in $x_2^{-k}+R[\![x_1]\!]+x_2^{-k+1}R[\![x_2]\!]$, which proves
the claimed surjectivity.

\noindent
{\bf Step 3}.
Let $J\sub B$ denote the kernel of the homomorphism $B\to R[\![x_1]\!]\oplus R[\![x_2]\!]\to R\oplus R$.
We claim that the completion of $B$ with respect to the $J$-adic topology is precisely $R[\![x_1]\!]\oplus R[\![x_2]\!]$. 
Indeed, this is proved in the same way as in Step 1, by choosing an element $f_N$ mapping to $x_1^N+x_2^N$ for a very large $N$.

Next, we observe that an element $f\in A\sub A[f^{-1}]$ belongs to $B$ and it is easy to see that $B[f^{-1}]\simeq A[f^{-1}]$. 
Hence, we can view $\Spec(A[f^{-1}])=V\setminus Z(f)$ as an open subscheme of $\Spec(B)$.

We claim that shrinking $V$ we can assume that $Z(f)$ is supported at $\im(\nu)$. Indeed, it is enough to check that $(f)$ contains $I^N$ for some $N$,
in a formal neighborhood of $\im(\nu)$. But this follows from the fact that the ideal generated by $(x_1^{n+1}+\ldots,x_2^{n+1}+\ldots)$ in $R[\![x_1,x_2]\!]/(x_1x_2-q)$ 
contains $(x_1,x_2)^N$ for sufficiently large $N$.

Now, shrinking $V$ as above, we define the curve $\wt{C}_0$ by gluing the $R$-schemes $\Spec(B)$ and $\CC\setminus \im(\eta)$ along the $V\setminus \im(\eta)=\Spec(B[f^{-1}])$.
By construction we have a pair of disjoint sections $p_1,p_2:\Spec(R)\to \Spec(B)$, such that the completion of $B$ along each gives an algebra isomorphic to $R[\![x_i]\!]$.
As in the proof of Prop.\ \ref{clutch-constr-prop}, one can show that $\wt{C}_0$ is proper over $\Spec(R)$ and that this construction does not depend on a choice of the neighborhood $V$.

It is easy to see that the two constructions are inverse of each other, so this establishes the claimed equivalence.
\end{proof}


\subsection{Computing the determinant line bundle}

Recall that the boundary line bundle $\OO(\De)$ on $\ov{\MM}_g$ is defined as the determinant line bundle
$$\LL_{C/S}:=\det \bigl(R\pi_*(\Om_{C/S}\to \om_{C/S})[1]\bigr)$$ 
associated with any family of stable curves $\pi:C\to S$.
This line bundle is equipped with a canonical section $\th$ (see \cite[Sec.\ XIII.4]{ACG2} which is a special case of a general construction of \cite{KM}).
We will compute the pair $(\LL_{C/S},\th)$ for the family corresponding to the extended clutching construction.

\begin{prop}\label{det-line-bundle-prop}
For the family $\CC=\CC(\wt{C}_0,p_1,p_2,x_1,x_2;q)$ over $\Spec(R)$, there is a natural trivialization 
$$\tau:R\rTo{\sim}\LL_{\CC/\Spec(R)}$$ 
such that $\th=\tau(q)$. 
\end{prop}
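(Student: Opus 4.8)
The plan is to build the trivialization $\tau$ by comparing the determinant line bundle of $\CC$ with that of the normalization $\wt C_0$ via the standard exact sequences relating a nodal curve to its normalization, and then track the canonical section $\th$ through this comparison.

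First I would recall the flat covering $U_1\sqcup U_2\to\CC$ with $U_1=\wt C_0\setminus\{p_1,p_2\}$ and $U_2=\Spf A(R)$, $A(R)=R[\![x_1,x_2]\!]/(x_1x_2-q)$, overlapping along $\Spf(R(\!(x_1)\!)\oplus R(\!(x_2)\!))$. The complex $(\Om_{\CC/R}\to\om_{\CC/R})$ restricted to $U_1$ is just $(\Om_{\wt C_0/R}\to\om_{\wt C_0/R})$ away from the sections; on $U_2$ the relative dualizing sheaf $\om$ of the family $x_1x_2=q$ is free on the generator $\frac{dx_1}{x_1}=-\frac{dx_2}{x_2}$, while $\Om_{U_2/R}$ is generated by $dx_1,dx_2$ with the relation $x_2\,dx_1+x_1\,dx_2=0$ plus $q$-torsion. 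The key local computation is that $R\Ga(U_2,(\Om_{U_2/R}\to\om_{U_2/R})[1])$ has a natural trivialization of its determinant in which the transition to the $q$-scaling is visible — concretely, the cokernel of $\Om\to\om$ on $U_2$ is the skyscraper-type module at the node whose length over $R$ detects $q$. I would assemble these pieces with a Mayer–Vietoris / Čech argument for the covering $\{U_1,U_2\}$: this expresses $\LL_{\CC/R}$ as a tensor product of the determinant of the local contribution at the node (over $U_2$, glued along the punctured disks) with the determinant of the smooth part, and on the smooth part $R\pi_*$ of the two-term complex $(\Om\to\om)$ is acyclic (since $\Om_{\wt C_0/R}\xrightarrow{\sim}\om_{\wt C_0/R}$ there), giving a canonical trivialization away from the node. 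Using the injectivity and the explicit image of $\iota$ from Lemma \ref{A(R)-basic-lem}, one identifies the gluing contribution precisely.

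Then I would trace the canonical Knudsen–Mumford section $\th$. By its construction in \cite{KM} and \cite[Sec.\ XIII.4]{ACG2}, $\th$ is the image of $1$ under the map induced by the natural map of complexes $(\Om_{\CC/R}\to\om_{\CC/R})\to\om_{\CC/R}$ (or equivalently is built from the canonical map $\OO\to\coker$), and its vanishing locus is exactly the non-smooth locus, counted with the natural scheme structure. In the extended clutching family the singular locus is cut out by $q$: the base change $\CC\times_R R/q$ is the nodal curve $C_{0,R/q}$ and away from $q=0$ the family is smooth, so $\th$, as a section of the line bundle $\LL_{\CC/R}\cong R$ under $\tau$, must be a generator of the ideal $(q)$ times a unit. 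The heart of the matter is to show this unit is $1$ for the specific trivialization $\tau$ produced in the previous step — i.e.\ that the local model $x_1x_2=q$ contributes the section $q$ on the nose in the chosen local trivialization of the determinant at the node. This I would check by a direct computation with the two-term complex on $\Spec A(R)$ (or its completion), where $\coker(\Om\to\om)$ is generated over $R$ by the class of $\frac{dx_1}{x_1}$ with $x_i$ acting as multiplication and the relation forcing $q\cdot(\text{generator})$ to lie in the image — so the determinant section is literally $q$.

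The main obstacle I expect is the bookkeeping in matching the \emph{canonical} local trivialization of the determinant at the node (the one coming from the Knudsen–Mumford formalism, which must be compatible with base change and with the smooth-locus trivialization glued in via $\iota$) so that no spurious unit appears; in other words, pinning down that the natural section over $U_2$ is exactly $q$ rather than $q$ times some unit like $-1$ or a unit series, and doing so functorially in $R$. Everything else — flatness and properness of $\CC$, acyclicity on the smooth part, the Čech assembly — is either already established in Proposition \ref{clutch-constr-prop} and Lemma \ref{A(R)-basic-lem}, or is a routine cohomological computation; but the sign/normalization of $\th$ against $\tau$ is where care is genuinely required. To fix it cleanly I would first treat the universal local case $R=\Z[q]/(q^{n+1})$ with $\CC$ replaced by the standard node smoothing, verify $\th=\tau(q)$ there by an explicit resolution, and then deduce the general case by the base-change compatibility of both $\LL$ and $\th$ together with the base-change property of $\CC$ recorded in Proposition \ref{clutch-constr-prop}.
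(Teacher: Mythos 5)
Your proposal follows essentially the same route as the paper: a \v{C}ech computation for the covering $\{U_1,U_2\}$, using that $\Om\to\om$ is an isomorphism on $U_1$ and $U_{12}$ to reduce $\LL_{\CC/S}$ to $\det\bigl([\Om_{U_2/S}\to\om_{U_2/S}][1]\bigr)$, followed by the explicit local computation on the node model $x_1x_2=q$. The normalization issue you flag is resolved in the paper exactly as you anticipate, by splitting off the rank-one subcomplex $[R\cdot x_1dx_2\to R\cdot e]\simeq[R\rTo{q}R]$ (with $e=-dx_1/x_1=dx_2/x_2$) from an acyclic complement, so that $e\ot(x_1dx_2)^{-1}$ gives the trivialization with $\th=\tau(q)$ on the nose.
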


\begin{proof}
We can compute the object $R\pi_*[\Om_{\CC/S}\to\om_{\CC/S}]$ using Cech resolutions
$$\Om_{\CC/S}\to \Om_{U_1/S}\oplus \Om_{U_2/S}\to \Om_{U_{12}/S},$$
$$\om_{\CC/S}\to \om_{U_1/S}\oplus \om_{U_2/S}\to \om_{U_{12}/S}.$$
Since the maps $\Om_{U_1/S}\to \om_{U_1/S}$ and $\Om_{U_{12}/S}\to \om_{U_{12}/S}$ are isomorphisms, we get
a natural quasi-isomorphism
$$R\pi_*(\Om_{C/S}\to \om_{C/S})\to [\Om_{U_2/S}\to \om_{U_2/S}].$$
Hence, 
$$\LL_{\CC/S}\simeq \det\bigl([\Om_{U_2/S}\to \om_{U_2/S}][1]\bigr)$$
and $\th$ is the determinant of the map $A\to B$, where $[A\to B]$ is a complex of projective finitely generated $R$-modules
quasi-isomorphic to $[\Om_{U_2/S}\to \om_{U_2/S}]$.

Now using the coordinates on $U_2=\Spf(\AA)$, where $\AA=R[\![x_1,x_2]\!]/(x_1x_2-q)$, we get
$$\Om_{U_2/S}=\AA\cdot dx_1\oplus \AA\cdot dx_2/(x_1dx_2+x_2dx_1),$$ 
$$\om_{U_2/S}=\AA\cdot e,$$
where $e=-\frac{dx_1}{x_1}=\frac{dx_2}{x_2}$.
It is easy to see that we have decompositions of $R$-modules
$$\Om_{U_2/S}=R\cdot x_1dx_2 \oplus R[\![x_1]\!]\cdot dx_1\oplus R[\![x_2]\!]\cdot dx_2,$$
$$\om_{U_2/S}=R\cdot e\oplus x_1 R[\![x_1]\!]\cdot e\oplus x_2 R[\![x_2]\!]\cdot e,$$
and the complex $[\Om_{U_2/S}\to \om_{U_2/S}]$ splits into a direct sum of
$$[R\cdot x_1dx_2\to R\cdot e]\simeq [R\rTo{q} R]$$
and of an acyclic complex. Hence, the element
$$e\ot (x_1dx_2)^{-1}\in \det[R\cdot x_1dx_2\to R\cdot e]$$
induces the required trivialization $\tau$ of $\LL_{\CC/S}$ such that $\tau(q)=\th$.
\end{proof}

\section{The formal neighborhood of the boundary divisor and the gluing group scheme}\label{formal-nbhd-sec}

\subsection{Changes of formal parameters on curves}\label{formal-parameters-sec}

Let $(C,p)$ be a family of curves over a commutative ring $R$, with a marked point $p\in C(R)$, such that
$C$ is smooth over $R$ near $p$. 

Let us denote by $\hat{\OO}_{C,p}$ the ring of functions on the completion of $C$ near $p$:
$$\hat{\OO}_{C,p}:=\varprojlim_n H^0(C,\OO_C/\OO_C(-np)).$$
Recall that a relative formal parameter $t$ at $p$ is an element of ideal of $p$ in $\hat{\OO}_{C,p}$ inducing
an isomorphism 
$$R[\![t]\!]\rTo{\sim} \hat{\OO}_{C,p}.$$
Then we have the induced isomorphism
$$R(\!(t)\!)\rTo{\sim} \KK_{C,p},$$
where $\KK_{C,p}:=\varinjlim_m \varprojlim_n H^0(C,\OO_C(mp)/\OO_C(-np)$.

Given $(C,p,t)$, where $t$ is a formal parameter at $p$, and a continuous automorphism $\a$ of $R(\!(t)\!)$, we can define new data
$(C',p',t')$ as follows. Let $U$ be an open affine neighborhood of $p$. Then $C$ is glued from $U$ and $C\setminus p$ along $U\setminus p$.
We have natural homomorphisms
$$\OO(U)\to \hat{\OO}_{C,p}, \ \ \OO(U\setminus p)\to \KK_{C,p},$$
so that we have a cartesian diagram
\begin{diagram}
\OO(U) &\rTo{} & \OO(U\setminus p)\\
\dTo{}&&\dTo{}\\
\hat{\OO}_{C,p}&\rTo{}&\KK_{C,p}
\end{diagram}
Using the formal parameter $t$ we can replace the bottom row in this diagam by
the map $R[\![t]\!]\to \KK_{C,p}$.

Now we define $C'$ by gluing the new affine curve $U'$ with $C\setminus p$ along $U\setminus p$, where $\OO(U')$ is defined as the fibered product 
\begin{equation}\label{O-U'-fib-prod}
\begin{diagram}
\OO(U') &\rTo{} & \OO(U\setminus p)\\
\dTo{}&&\dTo{}\\
R[\![t]\!]&\rTo{t\mapsto \a(t)}&\KK_{C,p}
\end{diagram}
\end{equation}
The composed homomorphism $\OO(U')\to R[\![t]\!]\to R$ of $R$-algebras defines a marked point $p'$ on $U'\sub C'$, such that
$U'\setminus p'\simeq U\setminus p$ (and $C'\setminus p'\simeq C\setminus p$).

It is easy to see that the construction of $C'$ does not depend on a choice of $U$, and
defines an action of the ind-group scheme $\GG$
on the moduli stack $\ov{\MM}^{(\infty)}_{g,1}$ of curves $C$ with a smooth marked point $p$ and a formal parameter $t$ at $p$ (see also \cite[Sec.\ 17.3]{BZF}).
Similarly, $\GG\times \GG$ acts on $\ov{\MM}^{(\infty)}_{g,2}$ by regluing at two marked marked points.

Thus, using homomorphism \eqref{gluing-scheme-homomorphism-eq} from $\GG_{\glue,n}$ to $(\GG\times\GG)_{S_n}$, we get an action of
$\GG_{\glue,n}$ on 
$$\ov{\MM}^{(\infty)}_{g_1,g_2;n}=\ov{\MM}_{g_1,1}^{(\infty)}\times\ov{\MM}_{g_2,1}^{(\infty)}\times S_n.$$

\subsection{Canonical formal parameters in genus $1$}

In this section we work over $\Q$, i.e., all rings will be $\Q$-algebras, and we set $S_n=\Spec(\Q[q]/(q^{n+1}))$.
For genus $1$ we have the following notion of {\it canonical formal parameter}.

\begin{definition}
Let $(C,p)$ be a stable curve of genus $1$ with one marked point over $S=\Spec(R)$. We say that a formal parameter $t$ at $p$ is {\it canonical}
if the formal differential $dt$ extends to a global section of the dualizing sheaf $\om_{C/S}$.
\end{definition}

Since for $(C,p)$ stable of genus $1$ the map $H^0(C,\om_C)\to \om_C|_p$ is an isomorphism, for every formal parameter $t$ at $p$ there exists a unique canonical formal parameter $t_c$ at $p$
such that $t_c\equiv t\mod (t^2)$.

Let us consider the subgroup $\GG'_1\sub \GG_{\glue,1}$ acting on $\MM^{(\infty)}_{1,h;1}$ by 
the change the formal parameters
\begin{equation}\label{genus-1-h-2nd-subgroup-eq}
(x_1,x_2)\mapsto (x_1-qc_2,x_2+c_2x_2^2+\ldots)
\end{equation}
and changing the marked point $p_1$ accordingly (see Prop.\ \ref{group-scheme-Lie-prop}(ii)).
We claim that if we assume that $x_1$ is canonical and $q^2=0$,
then this action is equivalent to the standard action of $\GG'_1$ via the second factor.

\begin{lemma}\label{g1-can-par-lem}
Let $(C,p)$ be a stable pointed curve of genus $1$ over $R$, $t$ a canonical formal parameter at $p$, and $r\in R$ an element such that $r^2=0$. 
Let us consider a reglued data $(C,p',t')$, where $t'=t+r$. Then there is a natural isomorphism $(C,p,t)\simeq (C,p',t')$.
\end{lemma}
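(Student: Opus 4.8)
Here is how I would approach Lemma~\ref{g1-can-par-lem}. The key point is that on a genus $1$ curve $(C,p)$ the group of translations acts transitively on points, and the translation by $p'-p$ — where $p'$ is the new marked point obtained from the regluing — carries the canonical parameter $t$ to the canonical parameter at $p'$, which to first order is $t' = t+r$. Since $r^2=0$, everything happens in a first-order (square-zero) neighborhood, so the ``translation'' we need is really an infinitesimal one coming from a vector field.

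\medskip
\noindent
\textbf{Step 1: identify the reglued point.}
First I would unwind the construction of Section~\ref{formal-parameters-sec}: the reglued data $(C,p',t')$ has the same underlying curve $C$ away from $p$, the parameter $t' = t+r$ near $p$, and the new marked point $p'$ is cut out by $t'=0$, i.e. $t=-r$. Because $r^2=0$, this $p'$ is an $R$-point of $C$ infinitesimally close to $p$: it corresponds to the composition $\OO(U')\to R[\![t]\!]\to R$, and $p'\equiv p$ modulo the square-zero ideal $(r)$.

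\medskip
\noindent
\textbf{Step 2: produce the translation vector field.}
Since $\om_{C/S}$ is trivial of rank $1$ and $dt$ generates it near $p$, let $\th$ be the global section of $\om_{C/S}$ restricting to $dt$ near $p$ (this exists and is unique by the isomorphism $H^0(C,\om_C)\rTo{\sim}\om_C|_p$ quoted just before the lemma). Dualizing, there is a unique global vector field $D\in H^0(C,\TT_{C/S})$ with $\langle \th, D\rangle = 1$; near $p$ it is $D=\frac{d}{dt}$. Exponentiating the square-zero vector field $-rD$ gives an automorphism $\exp(-rD)$ of $C$ over $R$ (well-defined since $(rD)^2=0$, working over a $\Q$-algebra), which is precisely translation by the infinitesimal point $p'$: it sends $p$ to $p'$, since near $p$ it is $t\mapsto t-r$.

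\medskip
\noindent
\textbf{Step 3: check it intertwines the formal parameters.}
The automorphism $\exp(-rD)$ pulls back $t$ to $t-r$... wait — I want it to send the triple $(C,p,t)$ to $(C,p',t')$ with $t'=t+r$, so I take $\phi=\exp(rD)$: then $\phi(p)=p'$ (as $t\circ\phi = t+r$ vanishes at $\phi^{-1}(p')$... more precisely $\phi$ carries the locus $\{t=0\}$ to $\{t=-r\}=\{t'=0\}=p'$) and $\phi^*t' = \phi^*(t+r) = (t+r)\circ\phi^{-1}$; a direct first-order computation shows $\phi$ matches $t$ with $t'$ on the nose. Because $\th$ is $D$-invariant to first order ($\mathcal{L}_D\th = d\langle\th,D\rangle = 0$), the parameter $t'$ is again canonical, consistent with the setup. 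Thus $\phi:(C,p,t)\rTo{\sim}(C,p',t')$ is the desired isomorphism, and it is natural in $R$ because $\th$, hence $D$, hence $\exp(rD)$, is constructed functorially.

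\medskip
\noindent
\textbf{Main obstacle.}
The only genuinely delicate point is Step~2–3: making precise that ``translation by the infinitesimal point $p'$'' is globally defined on $C$ (not just near $p$) and that it does the right thing at the level of the glued affine charts $U'$ and $C\setminus p$ used to define the regluing. This is exactly where the genus $1$ hypothesis and the nowhere-vanishing of $\th$ (equivalently, the triviality of $\om_{C/S}$, equivalently stability + genus $1$) are used: on a higher-genus curve there is no such global $D$. Once $\exp(rD)$ is in hand as an honest automorphism of $C/R$, the verification that it carries one gluing diagram to the other is a routine square-zero computation with the cartesian squares \eqref{O-U'-fib-prod}, which I would not spell out in full.
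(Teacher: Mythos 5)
Your proposal is correct and follows essentially the same route as the paper: take the global vector field dual to the canonical differential extending $dt$, exponentiate the square-zero multiple $rv$ to get an automorphism of $C$ acting as $t\mapsto t+r$ near $p$, and check it intertwines the two gluing diagrams \eqref{O-U'-fib-prod}. The only (harmless) imprecision is in producing the vector field: since $C$ may be nodal, one should take $\wt{v}\in H^0(C,\om^{-1}_{C/S})$ pairing to $1$ with the canonical differential and push it into $\TT_{C/S}$ via the map dual to $\Om_{C/S}\to\om_{C/S}$, as the paper does, rather than pair $\th$ directly with a section of $\TT_{C/S}$.
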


\begin{proof}
First, we observe that since $\om_{C/S}$ is isomorphic to the pull-back of a line bundle on $S$, the natural map of $R$-modules 
$$H^0(C,\om^{-1}_{C/S})\to \om^{-1}_{C/S}|_p$$
is an isomorphism. Hence, there exists an element $\wt{v}\in H^0(C,\om^{-1}_{C/S})$ whose pairing with $dt\in H^0(C,\om_{C/S})$ is equal to $1$. 
Let $v\in H^0(C,\TT_{C/S})$ denote the global derivation of $\OO_C$, which is the image of $\wt{v}$ under the map 
$\om^{-1}_{C/S}\to \TT_{C/S}$, dual to the natural map $\Om_{C/S}\to \om_{C/S}$ (which is an isomorphism away from the nodes). 
Then we still have $v(t)=1$.

Next, we observe that since $r^2=0$, we have an automorphism $\exp(rv):f\mapsto f+rv(f)$ of $C$, acting trivially on the underlying topological space.  
Thus, if $U$ is an open affine neighborhood of $p$, then we have a commutative square
\begin{diagram}
\OO(U\setminus p)&\rTo{\exp(rv)}&\OO(U\setminus p)\\
\dTo{}&&\dTo{}\\
\KK_{C,p}&\rTo{t\mapsto t+r}&\KK_{C,p}
\end{diagram}

Thus, if $\OO(U')$ is defined by the cartesian square \eqref{O-U'-fib-prod}, with $\a(t)=t+r$, then $\exp(rv)$ induces an isomorphism between $\OO(U)$
and $\OO(U')$, compatible with the marked points $p$ and $p'$, and fitting into a commutative square
\begin{diagram}
\OO(U)&\rTo{}& \OO(U\setminus p)\\
\dTo{}&&\dTo{\exp(rv)}\\ 
\OO(U')&\rTo{}& \OO(U\setminus p)
\end{diagram}
Together with the automorphism $\exp(rv)$ of $C\setminus p$, this gives an isomorphism $C\rTo{\sim} C'$ sending $p$ to $p'$ and $t$ to $t'=t+r$.
\end{proof}

The proof of Theorem \ref{g1-thm} will be based on Propositions \ref{genus1-h-prop} and \ref{genus1-1-prop} below.

\begin{prop}\label{genus1-h-prop} 
For any $h\ge 1$, there is a natural isomorphism
$$[\ov{\MM}^{(\infty)}_{1,h;1}/\GG_{\glue,1}]\simeq [(\ov{\MM}^{(1)}_{1,1}\times \ov{\MM}_{h,1}^{(2)}\times S_1)/(\G_a\rtimes\G_m)],$$
where $\G_m$ changes $(x_1,x_2)$ to $(\la x_1,\la^{-1}x_2)$, and $\G_a$ acts by the changes of formal parameters
$(x_1,x_2)\mapsto (x_1-qr,x_2+rx_2^2)$.
\end{prop}

\begin{lemma}\label{quotient-lem}
Let a group scheme $G$ act on a scheme $X$. Assume that $G$ has two subgroups $H_1,H_2\sub G$, such that the product map
$$H_1\times H_2\rTo{\sim} G$$ 
is an isomorphism of schemes. Assume also that $Y\sub X$ is an $H_2$-invariant subscheme, such that the $H_1$-action
induces an isomorphism of schemes 
$$H_1\times Y\rTo{\sim} X,$$
i.e., $Y$ is a section for the (free) action of $H_1$ on $X$.
Then we have a natural isomorphism of quotient stacks
$$[Y/H_2] {\rTo{\sim}} [X/G].$$
\end{lemma}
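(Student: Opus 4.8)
The plan is to produce the isomorphism of quotient stacks by a direct comparison of groupoids. Recall that $[X/G]$ is the stack whose $T$-points are pairs $(P\to T, P\to X)$ of a $G$-torsor $P$ with a $G$-equivariant map to $X$, and similarly for $[Y/H_2]$. There is an obvious morphism $[Y/H_2]\to [X/G]$: given an $H_2$-torsor $Q$ over $T$ with an $H_2$-equivariant map $Q\to Y\hra X$, induce up to the $G$-torsor $Q\times^{H_2} G$ and use the composite $Q\times^{H_2}G\to X\times^{H_2} G\to X$, the last map coming from the $G$-action on $X$. (Here $Q\times^{H_2}G$ is a $G$-torsor precisely because $Q$ is an $H_2$-torsor and $H_2\sub G$ is a subgroup scheme, so $G$ is faithfully flat over $G/H_2$; flatness of $G$ over $H_2$ is what makes this induction well-behaved, and it holds in our situation since all the groups in sight are $\G_m$ times affine space.) I will show this morphism is an equivalence by constructing a quasi-inverse.

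First I would construct the quasi-inverse at the level of objects. Given a $G$-torsor $P\to T$ with a $G$-equivariant map $f:P\to X$, form the fiber product $Q:=f^{-1}(Y)=P\times_X Y$. The claim is that $Q$ is naturally an $H_2$-torsor over $T$ and that $P\cong Q\times^{H_2}G$ compatibly with the maps to $X$. To see this, note that the hypothesis that $H_1\times Y\rTo{\sim} X$ is an isomorphism (so $Y$ is a section of the free $H_1$-action on $X$) gives, after pulling back along $f$, that the composite $H_1\times Q\to H_1\times P\rTo{\mathrm{act}} P$ — wait, rather: pull the isomorphism $H_1\times Y\cong X$ back along $f$ to get $P\cong H_1\times_{(H_1\times Y)} (H_1\times Y)\times_X P$; using $H_1\times Y\cong X$ this identifies $P$ with $H_1\times (Y\times_X P)=H_1\times Q$. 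In other words the $H_1$-action on $P$, combined with $Q\hra P$, yields an isomorphism $H_1\times Q\rTo{\sim} P$ of schemes over $T$. Then, because $H_1\times H_2\rTo{\sim}G$ as schemes and the $G$-action is free and transitive on the fibers of $P\to T$, the residual $H_2$-action on $P$ preserves $Q$ (since $Y$ is $H_2$-invariant and $f$ is equivariant) and makes $Q\to T$ an $H_2$-torsor: étale-locally where $P\cong G\times T$ we get $Q\cong H_2\times T$ via the decomposition $G\cong H_1\times H_2$. Finally $Q\times^{H_2}G\cong Q\times^{H_2}(H_1\times H_2)\cong H_1\times Q\cong P$, and under this identification the map to $X$ matches, so the two constructions are mutually inverse on objects; the same computation handles morphisms, giving the equivalence of stacks.

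The main obstacle I anticipate is not any single hard computation but rather keeping the bookkeeping honest: one must check that the $H_2$-action on $P$ obtained from the decomposition $G\cong H_1\times H_2$ really does make $Q\to T$ an \emph{$H_2$-torsor} (fppf-locally trivial, with $H_2$ acting simply transitively on fibers), and that all the identifications $H_1\times Q\cong P$, $Q\times^{H_2}G\cong P$ are canonical and natural in $T$ so that they assemble into a morphism of stacks rather than merely a fiberwise bijection. The key subtlety is that $H_1\times H_2\rTo{\sim}G$ is only asserted to be an isomorphism of \emph{schemes}, not of groups, so one cannot treat $H_2$ as a normal subgroup or form an honest quotient group $G/H_1$; all arguments must stay at the level of the free $H_1$-action on $X$ (equivalently on $P$) and the section $Y$, never using a group structure on $X/H_1$. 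Once one checks (using faithfully flat descent, since $P\to T$ is an fppf cover after trivializing the torsor) that the construction $P\mapsto f^{-1}(Y)$ commutes with base change in $T$, the equivalence follows formally. In our application, $G=\GG_{\glue,n}$, $H_1=\Aut_n(R[\![t_1]\!])\times\Aut_n(R[\![t_2]\!])$ or a similar factor, and $H_2$ the complementary factor coming from Proposition \ref{group-scheme-Lie-prop}(ii), so the scheme-theoretic splitting hypothesis is exactly \eqref{Aut-decomposition}.
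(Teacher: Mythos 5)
Your proposal is correct and rests on the same core computation as the paper's proof: the two scheme-theoretic product decompositions $G\simeq H_1\times H_2$ and $X\simeq H_1\times Y$ together with the $H_2$-invariance of $Y$ and the uniqueness of the decompositions. The only difference is packaging: the paper reduces the statement to the assertion that the induction map $G\times_{H_2}Y\to X$ is an isomorphism and writes down an explicit inverse, whereas you unwind the quotient stacks via torsors and verify the equivalent fact that $Q=f^{-1}(Y)$ is an $H_2$-torsor with $Q\times^{H_2}G\simeq P$ — the local-triviality check you defer (decomposing the image point as $h_1\cdot y$ and observing $\{g:\ gx\in Y\}=H_2h_1^{-1}$) is exactly the paper's uniqueness argument.
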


\begin{proof} It suffices to prove that the map induced by the $G$-action,
$$G\times_{H_2} Y\to X,$$
is an isomorphism. We claim that the composition
$$X\to H_1\times Y\to G\times Y\to G\times_{H_2} Y$$
is the inverse. Indeed, starting from $(g,y)\in G\times Y$, consider the unique decomposition of $gy$,
$$gy=h_1\cdot y',$$
with $h_1\in H_1$ and $y'\in Y$. On the other hand, we have a unique decomposition
$$g=h'_1\cdot h_2,$$
where $h'_1\in H_1$, $h_2\in H$. Now, since $h_2y\in Y$, the equality
$$h'_1\cdot (h_2y)=h_1\cdot y'$$
implies that $h'_1=h_1$ and $y'=h_2y$. Hence, $(g,y)=(h_1h_2,h_2^{-1}y')$ projects to the same point as $(h_1,y')$ in $G\times_{H_2} Y$.
\end{proof}

\begin{proof}[Proof of Proposition \ref{genus1-h-prop}]
Let us apply Lemma \ref{quotient-lem} to $X=\ov{\MM}^{(\infty)}_{1,h;1}$, $G=\GG_{\glue,1}$, $Y\sub X$ the subscheme corresponding to the data with the parameter $x_1$
canonical, $H_2=\GG'_1\rtimes \G_m$, where $\GG'_1$ acts by \eqref{genus-1-h-2nd-subgroup-eq}, and $H_1=\GG_1$ the subgroup corresponding to reversing the roles of $x_1$ and $x_2$. Note that $\GG_1$ acts freely on $\ov{\MM}^{(\infty)}_{1,1}$ and the subscheme with $x_1$ canonical is a section of this action. This implies that
$H_1$ acts freely on $X$ and $Y$ is a section for this action. On the other hand, by Lemma \ref{g1-can-par-lem}, the action of $H_2$ preserves $Y$,
and we get an isomorphism
$$[\ov{\MM}^{(\infty)}_{1,h;1}/\GG_{\glue,1}]\simeq [(\ov{\MM}^{(1)}_{1,1}\times \ov{\MM}_{g_2,1}^{(\infty)}\times S_1)/(\GG'_1\rtimes \G_m)],$$
where $\GG'_1$ acts by transformations \eqref{genus-1-h-2nd-subgroup-eq}.
Now we notice that the subgroup of $\GG'_1$ of transformations that preserve $x_2$ modulo $(x_2^3)$ acts only on $\ov{\MM}_{h,1}^{(\infty)}$
and the quotient by this action is exactly $\ov{\MM}_{h,1}^{(2)}$. The quotient of $\GG'_1$ by this subgroup is $\G_a$, and the assertion follows.
\end{proof}

\begin{prop}\label{genus1-1-prop} 
For any $n\ge 0$, there are natural isomorphisms
$$[\ov{\MM}^{(\infty)}_{1,1;n}/\GG_{\glue,n}]\simeq [(\ov{\MM}^{(1)}_{1,1}\times \ov{\MM}_{1,1}^{(1)}\times S_n)/\G_m],$$
$$[(\ov{\MM}^{(\infty)}_{1,2}\times S_n)/\GG_{\glue,n}]\simeq [(\ov{\MM}^{(1)}_{1,2}\times S_n)/\G_m],$$
where $\G_m$ changes $(t_1,t_2)$ to $(\la t_1,\la^{-1}t_2)$.
\end{prop}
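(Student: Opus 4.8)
\textbf{Proof proposal for Proposition \ref{genus1-1-prop}.}

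The plan is to mimic the strategy of Proposition \ref{genus1-h-prop}, but now exploiting the fact that \emph{both} factors have genus $1$, so that canonical formal parameters and the translation trick of Lemma \ref{g1-can-par-lem} are available at both marked points simultaneously. I will work over a $\Q$-algebra base throughout. The key point is that for genus $1$ the subscheme of $\ov{\MM}^{(\infty)}_{1,1}$ where the formal parameter is canonical is a section for the free action of $\GG_1=\Aut_1 R[\![t]\!]$, and moreover Lemma \ref{g1-can-par-lem} lets us absorb arbitrary \emph{infinitesimal} translations (elements with square zero) into honest isomorphisms of pointed curves; the subtlety in the present statement is that $q$ is only nilpotent of order $n$, not necessarily $2$, so the translation trick must be bootstrapped by induction on $n$.

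First I would set up the decomposition \eqref{Aut-decomposition} of $\GG_{\glue,n}$ as a product $\Aut_1 R[\![t_1]\!]\times R^*\times \Aut_1 R[\![t_2]\!]$ and, using Proposition \ref{group-scheme-Lie-prop}(ii), write $\GG_{\glue,n}=H_1\cdot H_2$ with $H_1=j_1(\GG_1)$ (reparametrizations depending only on $t_1$) and $H_2=R^*\times j_2(\GG_1)$. I would then take $X=\ov{\MM}^{(\infty)}_{1,1;n}$ and $Y\subset X$ the subscheme cut out by ``$x_1$ is canonical at $p_1$''; as in the genus-$h$ case, $H_1$ acts freely on $X$ and $Y$ is a section. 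By Lemma \ref{quotient-lem} this already gives $[X/\GG_{\glue,n}]\simeq [Y/H_2]$, where $Y\simeq \ov{\MM}^{(1)}_{1,1}\times \ov{\MM}^{(\infty)}_{1,1}\times S_n$ (the first factor now records only the $1$-jet of the parameter, since canonicity pins down the rest), and $H_2=\G_m\times j_2(\GG_1)$ acts on the second factor by reparametrization and on $(t_1,t_2)$ by the $\G_m$-scaling $(\la t_1,\la^{-1}t_2)$. It remains to show that quotienting the $\ov{\MM}^{(\infty)}_{1,1}$ factor by $j_2(\GG_1)$ replaces it by $\ov{\MM}^{(1)}_{1,1}$, i.e.\ kills everything except the $1$-jet of $x_2$.

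The main obstacle, and where the genuinely new work lies, is this last step: over $S_n$ with $n\ge 2$ the subgroup $j_2(\GG_1)$ does \emph{not} act by pure reparametrization of the second curve — conjugating through \eqref{gluing-scheme-homomorphism-eq}, an element $u=u(t_2)$ produces $\a_2(t_2)=t_2 u(q/t_2,t_2)^{-1}$, which mixes in $q$-corrections coming from the $t_1$-side. To handle this I would again argue by induction on $n$: assuming the isomorphism modulo $q^n$, I reduce to elements of $j_2(\GG_1)$ congruent to the identity modulo $q^n$, so that the relevant correction term is killed by $q^{n+1}$ and in particular the offending automorphism of the second curve becomes an \emph{infinitesimal} translation of the canonical parameter (its square involves $q^{2(n+?)}$-type vanishing); then Lemma \ref{g1-can-par-lem}, applied with $R$ replaced by the appropriate square-zero thickening, trivializes it. Concretely, one writes the residual element of $\GG_{\glue,n}$ with $u=1+q^n a(t_2)$, uses Proposition \ref{group-scheme-Lie-prop}(ii) to split $a$ and observes $\a_2(t_2)=t_2-q^{n}(\text{constant term of }a)+\cdots$ acts on the second curve as translation by a square-zero element, hence is realized by an automorphism of $(C_2,p_2)$ by Lemma \ref{g1-can-par-lem}; after absorbing it we are left with a pure reparametrization of $x_2$ modulo $t_2^2$, which is exactly the content of passing to $\ov{\MM}^{(1)}_{1,1}$. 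The case of $\ov{\MM}^{(\infty)}_{1,2}$ is identical, with the role of ``choose the canonical parameter at $p_1$'' played by choosing canonical parameters at one of the two marked points of a genus-$1$ two-pointed curve, using that $H^0(C,\om_{C/S})$ is still a line bundle pulled back from $S$; I would just remark that the same argument applies verbatim.
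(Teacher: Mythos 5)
There is a genuine gap, and it occurs earlier than you think. Your first step applies Lemma \ref{quotient-lem} with $Y\subset X$ the locus where only $x_1$ is canonical and $H_2=\G_m\times j_2(\GG_1)$, but that lemma requires $Y$ to be $H_2$-invariant, and for $n\ge 2$ this is exactly what fails (or at least is unproved). You have also misplaced where the $q$-mixing happens: for $u=u(t_2)$ the component $\a_2(t_2)=t_2\,u(t_2)^{-1}$ is a \emph{pure} reparametrization of the second disk, while it is $\a_1(t_1)=t_1\,u(q/t_1)^{\pm 1}=t_1-c_2q+(\ast)q^2/t_1+\cdots$ that picks up corrections; so $j_2(\GG_1)$ genuinely moves the \emph{first} factor, by a change of parameter which for $n\ge 2$ contains pole terms $q^k/t_1^{k-1}$ and is neither a translation nor square-zero. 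Lemma \ref{g1-can-par-lem} only absorbs translations $t\mapsto t+r$ with $r^2=0$, which is precisely why Proposition \ref{genus1-h-prop} is stated over $S_1$; it cannot restore $H_2$-invariance of your $Y$ over $S_n$. Consequently the later claim that ``quotienting the $\ov{\MM}^{(\infty)}_{1,1}$ factor by $j_2(\GG_1)$ replaces it by $\ov{\MM}^{(1)}_{1,1}$'' also presumes that $j_2(\GG_1)$ acts only on the second factor, which is false. Finally, your patch is aimed at a phantom: for a residual element with $u\equiv 1\bmod q^n$ one has $u(q/t_2,t_2)\equiv u(0,t_2)$ since $q^n\cdot q=0$, so \emph{no} translation of the second curve appears at all --- the residual action is an unmixed reparametrization on each side, and no appeal to Lemma \ref{g1-can-par-lem} is needed or relevant there.

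The paper's proof avoids the asymmetric section altogether: it takes $Y_n\subset X_n$ to be the locus where \emph{both} $t_1$ and $t_2$ are canonical and proves directly, by induction on $n$, that the action map $\GG_{\glue,n}\times_{\G_m}Y_n\to X_n$ is an isomorphism. The inductive mechanism is the one you half-identified: after adjusting by a lift of a group element provided by the induction hypothesis, the parameters are canonical modulo $q^n$, and the further corrections $t'_i\mapsto t'_i+q^nr_i(t'_i)^2+\cdots$ are divisible by $q^n$ and lie in $\Aut_1$; by the last part of Proposition \ref{group-scheme-Lie-prop}(ii) such pairs embed in $\GG_{\glue,n}$ as a subgroup whose image under \eqref{gluing-scheme-homomorphism-eq} acts on the two disks separately (the would-be mixing terms are divisible by $q^{n+1}=0$). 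Uniqueness (that a group element preserving $Y_n$ lies in $\G_m$) is proved by the same reduction, using the uniqueness of the canonical parameter with a given $1$-jet. If you want to salvage your route, you essentially have to redo this induction anyway, so the asymmetric choice of $Y$ buys nothing and the translation trick plays no role in this proposition.
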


\begin{proof}
We will give the proof of the first isomorphism. The proof of the second isomorphism is similar.

We identify $Y_n:=\ov{\MM}^{(1)}_{1,1}\times \ov{\MM}_{1,1}^{(1)}\times S_n$ with a closed subscheme in
$X_n:=\ov{\MM}^{(\infty)}_{1,1;n}$ defined by the condition that both parameters $t_1$ and $t_2$ are canonical.
Note that the embedding $Y_n\sub X_n$ is a section of the natural projection $X_n\to Y_n$. It suffices to prove that the morphism
given by the action,
$$\GG_{\glue,n}\times_{\G_m} Y_n\to X_n,$$
is an isomorphism. 

We will prove this by induction on $n$. The case $n=0$ is clear, since in this case the subgroup $\GG_1\times \GG_1\sub\GG_{\glue,0}$ acts separately on two factors
$\ov{\MM}^{(\infty)}_{1,1}$. Now assume $n>0$ and the assertion holds for $n-1$. We use the fact that $\GG_{\glue,n}$ is a smooth group scheme over $S_n$, and that
$(X_{n-1},Y_{n-1},\GG_{\glue,n-1})$ are obtained by the base change $S_{n-1}\to S_n$ from $(X_n,Y_n,\GG_{\glue,n})$ in a way compatible with all the structures.

Given an object $t_\bullet=(C_1,p_1,t_1;C_2,p_2,t_2;q)$ of $X_n(R)$ we can consider the reduction modulo $q^n$, 
$$\ov{t}_\bullet\in X_n(R/(q^n))=X_{n-1}(R/(q^n)).$$
By assumption, there exists an element $\ov{g}\in \GG_{\glue,n-1}(R/(q^n))=\GG_{\glue,n}(R/(q^n))$ such that $\ov{g}\cdot \ov{t}_\bullet$ is in $Y_{n-1}(R/(q^n))$.
Let us lift $\ov{g}$ to an element $g\in \GG_{\glue,n}(R)$, and set $t'_\bullet=g\cdot t_\bullet\in X_n(R)$. Then the reductions of $t'_1$ and $t'_2$ modulo $q^n$
are canonical parameters. Let $t_{1,c}$ and $t_{2,c}$ be the canonical parameters for $(C'_1,p'_1)$ and $(C'_2,p'_2)$ such that $t_{i,c}$ agrees with $t'_i$ modulo
the square of the maximal ideal of $p'_i$. Then we obtain that
$$t_{1,c}=t'_1+q^nr_1(t'_1)^2+\ldots, \ \ t_{2,c}=t'_2+q^nr_2(t'_2)^2+\ldots,$$ 
where all the higher coefficients are divisible by $q^n$. But this gives an element 
$$g'\in \Aut_1(R(\!(t_1)\!),n)\times \Aut_1(R(\!(t_2)\!),n)\sub \GG_{\glue,n}(R)$$
such that $g'(t'_1,t'_2)=(t_{1,c},t_{2,c})$ (here we use the last part of Proposition \ref{group-scheme-Lie-prop}(ii)).

On the other hand, given an $R$-point $g\in \GG_{\glue,n}(R)$ and an object $t_\bullet=(C_1,p_1,t_1;C_2,p_2,t_2;q)$ of $Y_n(R)$, such that
$g\cdot t_\bullet$ is in $Y_n(R)$, we need to check that $g$ is in $\G_m(R)\sub \GG_{\glue,n}(R)$. By the induction assumption, this is true modulo $q^n$,
so changing $g$ by an element of $\G_m(R)$, we can assume that $g\equiv 1 \mod q^n$.
We can write $g$ in the form $g=g_1\cdot g_2\cdot \la$, with $g_i\in j_i(\Aut_1 R(\!(t_i)\!))$, $\la\in R^*$. Then we
have $g_i(t_i)\equiv t_i \mod q^n$ and $\la\equiv 1 \mod q^n$. Note that $\la\cdot t_\bullet$ is still in $Y_n(R)$, while the action of $g_1\cdot g_2$ changes
the parameters $(t_1,t_2)$ to $(g_1(t_1),g_2(t_2))$. Since these should be canonical, we deduce that $g_1=g_2=1$, as claimed. 
\end{proof}



\subsection{Connection with a $\G_m$-torsor over $\EE_{g_1,g_2}^{(n)}$}

Let $R$ be a commutative $\Z[q]/(q^{n+1})$-algebra, $(\wt{C}_0,p_1,p_2)$ a family of curves over $R$ with two distinct smooth marked points and with 
relative formal parameters $x_i$ at $p_i$. 
The extended clutching construction from Sec.\ \ref{ext-clutch-sec} associates with these data a curve $\CC=\CC(\wt{C}_0,p_1,p_2,x_1,x_2;q)$ over $R$, which
is equipped with a closed embedding $\nu:\Spec(R/q)\hra \CC$ and an isomorphism of $R$-algebras
$$\eta:R[\![x_1,x_2]\!]/(x_1x_2-q)\rTo{\sim} \OO(\hat{\CC}_\nu).$$

Now we observe that the group $\GG_{\glue,n}(R)$ acts on the gluing data through the homomorphism
$\GG_{\glue,n}(R)\to \Aut R(\!(t_1)\!)\times\Aut R(\!(t_1)\!)$ and the action of the latter group on the data $(\wt{C}_0,p_1,p_2,x_1,x_2)$
(see Sec.\ \ref{formal-parameters-sec}).
On the other hand, the group $\GG_{\glue,n}(R)$ clearly acts on the set of possible isomorphisms $\eta$ (for a fixed curve $\CC$ and fixed $\nu$).
We claim that these two actions are compatible.

\begin{lemma}\label{glue-action-lem}
For any $\Z[q]/(q^{n+1})$-algebra $R$, the equivalence between the data $(\wt{C}_0,p_1,p_2,x_1,x_2)$ and $(\CC,\nu,\eta)$ 
(see Prop.\ \ref{gluing-data-equivalence-prop})
is compatible with the $\GG_{\glue,n}(R)$-action. Furthermore,  
the trivialization $\tau=\tau_{x_1,x_2,q}:R\to \LL_{\CC/\Spec(R)}$ defined in Proposition \ref{det-line-bundle-prop},
is preserved by the $\GG_{\glue,n}(R)$-action and satisfies
$$\tau_{\la x_1,x_2,\la q}=\tau_{x_1,\la x_2,\la q}=\la^{-1}\cdot \tau_{x_1,x_2,q}$$
for $\la\in R^*$.
\end{lemma}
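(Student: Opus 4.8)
The plan is to unwind both actions on the level of the fibered-product descriptions that were used to build $\CC$ and $\eta$, and check they agree term by term. First I would fix $\a\in\GG_{\glue,n}(R)$ given by a unit $u(x_1,x_2)\in A(R)^\ast$, so that $\a(x_1)=x_1u$, $\a(x_2)=x_2u^{-1}$, and the induced automorphisms of $R(\!(x_i)\!)\simeq A(R)[x_i^{-1}]$ are $\a_1(x_1)=x_1u(x_1,q/x_1)$, $\a_2(x_2)=x_2u(q/x_2,x_2)^{-1}$ (as recorded after \eqref{gluing-scheme-homomorphism-eq}). On the ``data'' side, $\a$ acts on $(\wt C_0,p_1,p_2,x_1,x_2)$ by applying the reglueing construction of Sec.\ \ref{formal-parameters-sec} at $p_1$ via $\a_1$ and at $p_2$ via $\a_2$; this replaces $U_1=\wt C_0\setminus\{p_1,p_2\}$ by a new affine curve whose ring of functions is the fibered product of $\OO(U_1\setminus\{p_1,p_2\})$ with $R[\![x_1]\!]\oplus R[\![x_2]\!]$ over $R(\!(x_1)\!)\oplus R(\!(x_2)\!)$, where the right vertical map is precomposed with $(\a_1,\a_2)$. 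On the ``gluing'' side, $\a$ acts on $\eta$ by postcomposition: $\eta\mapsto\eta\circ\a^{-1}$ (equivalently, it acts on the chosen formal coordinates at the node). The key computation is that, under the homomorphism $\iota\colon A(R)\to R(\!(x_1)\!)\oplus R(\!(x_2)\!)$, one has $\iota\circ\a = (\a_1,\a_2)\circ\iota$ — precisely the compatibility displayed before \eqref{gluing-scheme-homomorphism-eq} — so the two fibered-product squares defining the reglued $\CC$ (with new $\eta$) and the clutching construction applied to the reglued data literally coincide. Then I would invoke the fact (Prop.\ \ref{gluing-data-equivalence-prop}) that these constructions are mutually inverse equivalences, to conclude the first assertion.

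For the statement about $\tau$: recall from the proof of Prop.\ \ref{det-line-bundle-prop} that $\tau$ is built from the splitting of $[\Om_{U_2/S}\to\om_{U_2/S}]$ with $U_2=\Spf(\AA)$, $\AA=R[\![x_1,x_2]\!]/(x_1x_2-q)$, and that $\tau(1)=e\ot(x_1dx_2)^{-1}$ where $e=-dx_1/x_1=dx_2/x_2$. Since $\GG_{\glue,n}(R)$ acts on $\AA$ by $x_1\mapsto x_1u$, $x_2\mapsto x_2u^{-1}$, we get $dx_1/x_1\mapsto dx_1/x_1 + du/u$ and $dx_2/x_2\mapsto dx_2/x_2 - du/u$, so $e$ is \emph{not} fixed but $x_1dx_2\mapsto x_1u\cdot d(x_2u^{-1}) = x_1dx_2 - x_1x_2\,du/u = x_1dx_2 - q\,du/u$; the point is that $e\ot(x_1dx_2)^{-1}$, interpreted in the determinant of the two-term complex $[R\xrightarrow{q}R]$, is insensitive to these corrections because modifying $e$ by a multiple of $x_1\,e\in x_1R[\![x_1]\!]\cdot e$ (or $x_2$) lands in the acyclic part, and the $q\,du/u$ correction to $x_1dx_2$ is likewise absorbed. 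I would make this precise by noting that both $e\mapsto e + x_1 R[\![x_1]\!]e + x_2R[\![x_2]\!]e$ modifications and the $q(\cdots)$ modification of $x_1dx_2$ change the distinguished generator of $\det[R\cdot x_1dx_2\to R\cdot e]$ by a unit congruent to $1$, hence by $1$ since the relevant ambiguity lies in the nilpotent ideal and the determinant line is free of rank one; alternatively, use that $\th=\tau(q)$ is canonical (independent of all choices) together with the already-established equivariance of $\CC$ to pin down $\tau$ up to the scalar, then compute the scalar is $1$.

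The rescaling identity $\tau_{\la x_1,x_2,\la q}=\tau_{x_1,\la x_2,\la q}=\la^{-1}\tau_{x_1,x_2,q}$ I would get by direct substitution into the formula $\tau(1)=e\ot(x_1dx_2)^{-1}$: replacing $x_1$ by $\la x_1$ (and $q$ by $\la q$ so that $x_1x_2=q$ still holds) leaves $e=dx_2/x_2$ unchanged but sends $x_1dx_2$ to $\la x_1dx_2$, hence sends $\tau(1)$ to $\la^{-1}\tau(1)$; the computation with $x_2\mapsto\la x_2$ is symmetric since $e=-dx_1/x_1$. I expect the main obstacle to be the bookkeeping in the $\tau$-invariance step — specifically, verifying cleanly that the non-trivial action of $\GG_{\glue,n}(R)$ on the individual generators $e$ and $x_1dx_2$ of $\Om_{U_2/S}$ and $\om_{U_2/S}$ does not move the class $e\ot(x_1dx_2)^{-1}$ in $\det\bigl([\Om_{U_2/S}\to\om_{U_2/S}][1]\bigr)$; the decomposition-into-acyclic-plus-$[R\xrightarrow{q}R]$ argument from Prop.\ \ref{det-line-bundle-prop} has to be run $\GG_{\glue,n}$-equivariantly, and one must check the decomposition can be chosen compatibly, or else argue via the canonicity of $\th$ as indicated above.
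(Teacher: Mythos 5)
Your handling of the first assertion (stacking the commutative square $(\a_1,\a_2)\circ\iota=\iota\circ\a$ on top of the fibered product \eqref{A-fibered-product-diag} and invoking the equivalence of Prop.\ \ref{gluing-data-equivalence-prop}), and your derivation of the rescaling identity from $\tau(1)=e\ot(x_1dx_2)^{-1}$, coincide with the paper's argument and are fine. The gap is exactly at the step you yourself flag as the main obstacle: the $\GG_{\glue,n}(R)$-invariance of $\tau$.

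Your corrections to the distinguished generators do not lie in the acyclic complement. Writing $du=\frac{\partial u}{\partial x_1}dx_1+\frac{\partial u}{\partial x_2}dx_2$ and using $dx_1=-x_1e$, $dx_2=x_2e$, $x_2dx_1=-x_1dx_2$, $x_1x_2=q$, the terms $du/u$ and $q\,du/u$ acquire components along $R\cdot e$ and $R\cdot x_1dx_2$ (cross terms $x_1^ix_2^j$ produce multiples of $q$). Hence comparing the old and new splittings multiplies the trivialization of $\det[R\cdot x_1dx_2\to R\cdot e]$ by a unit of the form $1+q(\cdots)$, and your inference ``a unit congruent to $1$ modulo the nilpotent ideal is equal to $1$'' is false (e.g.\ $1+q$); indeed, the very rescaling formula in the lemma shows that coordinate changes at the node can rescale $\tau$ by a nontrivial unit, so an exact cancellation must be exhibited, not assumed. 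The point the paper makes, and which your argument is missing, is that after using the relations one has exactly $\a(e)=U\cdot e$ and $\a(x_1dx_2)=U\cdot x_1dx_2$ with the \emph{same} unit $U=1+u^{-1}\frac{\partial u}{\partial x_2}x_2-u^{-1}\frac{\partial u}{\partial x_1}x_1$; consequently the components of the new generators along the old ones in degrees $0$ and $1$ are equal, and their contributions to the determinant cancel, so the induced automorphism of $\det\bigl([\Om_{U_2/S}\to\om_{U_2/S}][1]\bigr)$ fixes $e\ot(x_1dx_2)^{-1}$. Your proposed fallback via the canonicity of $\th$ also does not close the gap: $\tau'(q)=\th=\tau(q)$ only forces the discrepancy unit $c$ to satisfy $(c-1)q=0$, which over a $\Z[q]/(q^{n+1})$-algebra allows $c=1+rq^n\neq 1$; so ``compute the scalar is $1$'' is precisely the content that still has to be supplied, and the only computation you sketch for it is the invalid nilpotence argument.
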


\begin{proof}
Recall that the curve $\CC$ is obtained by gluing $\wt{C}_0\setminus\{p_1,p_2\}$ with $\Spec(A)$, where the $R$-algebra $A$ is defined as the fibered product \eqref{A-fibered-product-diag},
and the isomorphism $\eta$ is induced by the homomorphism $\wt{\kappa}:A\to R[\![x_1,x_2]\!]/(x_1x_2-q)$. We can combine
the cartesian square \eqref{A-fibered-product-diag} defining $A$ with a commutative square 
\begin{diagram}
R[\![x_1,x_2]\!]/(x_1x_2-q)&\rTo{}& R(\!(x_1)\!)\oplus R(\!(x_2)\!)\\
\dTo{\a}&&\dTo{(\a_1,\a_2)}\\
R[\![x_1,x_2]\!]/(x_1x_2-q)&\rTo{}& R(\!(x_1)\!)\oplus R(\!(x_2)\!)
\end{diagram}
for any continuous automorphism $\a$ of $R[\![x_1,x_2]\!]/(x_1x_2-q)$ preserving the ideals $(x_1)$, $(x_2)$ and inducing the automorphisms $\a_i$ of $R(\!(x_i)\!)$.
In the case when $\a\in \GG_{\glue,n}(R)$, this immediately shows the claimed compatibility of the equivalence of Prop.\ \ref{gluing-data-equivalence-prop} with
the $\GG_{\glue,n}$-action.

Next, we want to check that $\GG_{\glue,n}$-action preserves the trivialization of the determinant bundle given in Proposition \ref{det-line-bundle-prop}.
For an automorphism $\a\in \GG_{\glue,n}$ such that $\a(x_1)=u^{-1}\cdot x_1$ and $\a(x_2)=u\cdot x_2$, for some unit $u$ in  $R[\![x_1,x_2]\!]/(x_1x_2-q)$,
one can easily check that
$$\a(\frac{dx_2}{x_2})=U\cdot \frac{dx_2}{x_2}, \ \ \a(x_1dx_2)=U\cdot x_1dx_2, \ \text{ where}$$
$$U=1+u^{-1}\frac{\partial u}{\partial x_2} x_2-u^{-1} \frac{\partial u}{\partial x_1} x_1.$$
Hence, $\a$ preserves the trivialization of determinant of the subcomplex $[R\cdot x_1dx_2\to R\cdot e]$ of $[\Om_{U_2/S}\to \om_{U_2/S}]$,
and hence it preserves the trivialization of $\LL_{\CC/S}$.

On the other hand, the rescaling $(x_1,x_2,q)\mapsto (\la\cdot x_1,x_2,\la\cdot q)$ does not change the generator $e=dx_2/x_2$ of $\om_{U_2/S}$,
and rescales $x_1dx_2$ to $\la\cdot x_1dx_2$. Hence, the corresponding trivialization of the determinant line bundle gets rescaled by $\la^{-1}$.
\end{proof}

\bigskip

\noindent
{\it Proof of Theorem \ref{inf-nbhd-thm}}.
Lemma \ref{glue-action-lem} implies that morphisms \eqref{ext-clutch-mor-1}, \eqref{ext-clutch-mor-2} factor through morphisms
\begin{equation}\label{ext-clutching-qu-mor}
[\ov{\MM}^{(\infty)}_{g_1,g_2,n}/\GG_{\glue,n}]\to \ov{\MM}_g,
\end{equation}
\begin{equation}\label{ext-clutchin-qu-mor-bis}
[(\ov{\MM}^{(\infty)}_{g-1,2}\times S_n)/\GG_{\glue,n}]\to \ov{\MM}_g
\end{equation}
We claim that the morphism \eqref{ext-clutching-qu-mor} (resp., \eqref{ext-clutchin-qu-mor-bis})
factors through $\De^{(n)}_{g_1,g_2}\sub \ov{\MM}_g$
(resp., $\De^{(n)}_0\sub \ov{\MM}_g$). Indeed, this immediately follows from Proposition \ref{det-line-bundle-prop}. Let us consider for example the boundary divisor $\De_{g_1,g_2}$.
It is defined as the vanishing locus of the section
$\th$ of the determinant line bundle, and the pull-back of this section is $q$. Since $q^{n+1}=0$, we see that \eqref{ext-clutching-qu-mor} factors through $\De^{(n)}_{g_1,g_2}\sub \ov{\MM}_g$.
Hence, the defining equation of $\De^{(n)}_{g_1,g_2}$ vanishes on the open dense locus in $\ov{\MM}^{(\infty)}_{g_1,g_2,n}$ corresponding to pairs of smooth curves, and so it
vanishes everywhere.

Let us consider the case $g_1\neq g_2$ first. Let $\FF$ denote
the moduli functor associating to a commutative ring $R$ an element $q\in R$ such that $q^{n+1}=0$ and the data $(\CC,\nu,\eta)$ as in Proposition \ref{gluing-data-equivalence-prop},
such that $\CC$ is a family of stable curves with the unique separating node given by $\nu$ and under $\eta$ the variable $x_1$ corresponds to the branch of genus $g_1$.
The equivalence of Proposition \ref{gluing-data-equivalence-prop} induces an equivalence of $\FF$ with $\ov{\MM}^{\irr,(\infty)}_{g_1,g_2,n}$.
Note that $\FF(R)$ is equipped with a natural action of $\GG_{\glue,n}(R)$ by changing the isomorphism $\eta$. In
addition, we have an action of $\G_m$ on $\FF$ that changes $(x_1,x_2,q)$ to $(\la x_1,x_2,\la q)$. 
By Proposition \ref{Aut-AR-prop}, the quotient $\ov{\MM}^{\irr,(\infty)}_{g_1,g_2,n}/\GG_{\glue,n}$
is a $\G_m$-torsor over $U\De^{(n)}_{g_1,g_2}$.
 Furthermore, by Lemma \ref{glue-action-lem}, we have a morphism of $\G_m$-torsors from this quotient to the $\G_m$-torsor of trivializations of
$\LL_{\CC/\Spec(R)}$, which is necessarily an isomorphism. This finishes the proof in the case $g_1\neq g_2$.

Similarly, from Propositions \ref{gluing-data-equivalence-prop} and  \ref{Aut-AR-prop} we get that $[\ov{\MM}^{\inf,(\infty)}_{g/2,g/2,n}/\GG_{\glue,n}]$
(resp., $[(\MM^{(\infty)}_{g-1,2}\times S_n)/\GG_{\glue,n}]$)
is a $\G_m$-torsor over $\EE^{(n)}_{g/2,g/2}$ (resp., $\EE^{(n)}_0$), and we finish by using Lemma \ref{glue-action-lem} as above.
\ed

\bigskip

\noindent
{\it Proof of Theorem \ref{g1-thm}}.
Let $X_{1,g-1}^{(n)}$ denote the $\G_m$-torsor over $U\De_{1,g-1}^{(n)}$ corresponding to the line bundle $\OO(\De)$.
Combining Theorem \ref{inf-nbhd-thm} with Proposition \ref{genus1-h-prop},
we get an isomorphism for $g>2$,
$$X_{1,g-1}^{(1)}\simeq [\ov{\MM}^{\irr,(\infty)}_{1,g-1;1}/\GG_{\glue,1}]\simeq [(\ov{\MM}^{(1)}_{1,1}\times \ov{\MM}_{g-1,1}^{\irr,(2)}\times S_1)/(\G_a\rtimes\G_m)].$$
Furthermore, as in the proof of Theorem \ref{inf-nbhd-thm}, this isomorphism is compatible with the $\G_m$-action, where $\G_m$ acts on 
$\ov{\MM}^{(1)}_{1,1}\times \ov{\MM}_{g-1,1}^{(2)}\times S_1$ by rescaling $(x_1,x_2,q)\mapsto (\la\cdot x_1,x_2,\la\cdot q)$.
Let us denote the latter action as $l(\G_m)$,
to distinguish it from the subgroup $\G_m\sub \GG_{\glue,n}$ acting by $(x_1,x_2,q)\mapsto (\la\cdot x_1,\la^{-1}\cdot x_2)$.
It follows that
\begin{equation}\label{torsor-g-isom-eq}
[X_{1,g-1}^{(1)}/\G_m]\simeq
[[\ov{\MM}^{(1)}_{1,1}\times \ov{\MM}_{1,g-1}^{\irr,(2)}\times S_1)/(\G_a\rtimes\G_m]/l(\G_m)]\simeq [(\ov{\MM}^{(1)}_{1,1}\times \ov{\MM}_{1,g-1}^{\irr,(2)}\times S_1)/(\G_a\rtimes l(\G_m)\times r(\G_m))],
\end{equation}
where $r(\G_m)$ acts by  $(x_1,x_2,q)\mapsto (x_1,\la\cdot x_2,\la\cdot q)$.

Similarly, in the case $g=2$, we have $\G_m$-torsors $X_{1,1}^{(n)}\to \EE_{1,1}^{(n)}$ and
$X_0^{(n)}\to \EE_0^{(n)}$, associated with $\OO(\De)$, and we have isomorphisms
$$X_{1,1}^{(n)}\simeq [\ov{\MM}^{(\infty)}_{1,1;1}/\GG_{\glue,n}]\simeq [(\ov{\MM}^{(1)}_{1,1}\times \ov{\MM}_{1,1}^{(1)}\times S_n)/\G_m],$$
$$X_{0}^{(n)}\simeq [(\MM^{(\infty)}_{1,2}\times S_n)/\GG_{\glue,n}]\simeq [(\MM^{(1)}_{1,2}\times S_n)/\G_m],$$
compatible with the extra $l(\G_m)$-action. As before, this induces isomorphisms of quotients
\begin{equation}\label{torsor-1-sep-isom-eq}
[X_{1,1}^{(n)}/\G_m]\simeq [(\ov{\MM}^{(1)}_{1,1}\times \ov{\MM}_{1,1}^{(1)}\times S_n)/(l(\G_m)\times r(\G_m))],
\end{equation}
\begin{equation}\label{torsor-1-nonsep-isom-eq}
[X_{0}^{(n)}/\G_m]\simeq [(\MM^{(1)}_{1,2}\times S_n)/(l(\G_m)\times r(\G_m))].
\end{equation}

Now we use the following easy observation: if $L$ is a line bundle over $S$ and $\PP\to S$ is the corresponding $\G_m$-torsor,
then the total space of $L$ can be identified with the quotient $(\PP\times \A^1)/\G_m$, where $\la\in \G_m$ acts by $(p,q)\mapsto (\la p, \la q)$,
and the $n$th infinitesimal neighborhood of the zero section in the total space of $L$ can be identified with the subscheme
$$(\PP\times S_n)/\G_m \sub (\PP\times \A^1)/\G_m.$$ 
Similarly, if $L_1,L_2$ are two line bundles, $\PP_1$, $\PP_2$ are the corresponding $\G_m$-torsors,
then 
$$(\PP_1\times_S \PP_2\times S_n)/\G_m^2,$$
where $(\la_1,\la_2)\in \G_m^2$ acts by $(p_1,p_2,q)\mapsto (\la_1 p_1,\la_2 p_2,\la_1\la_2 q)$, can be identified with $n$th infinitesimal neighborhood of the zero section
in the total space of $L_1\ot L_2$.

To deduce the result, we apply this observation to the right-hand sides of \eqref{torsor-1-sep-isom-eq} and \eqref{torsor-1-nonsep-isom-eq}, and use the well known
identification of the normal bundle of the boundary divisor with the product of line bundles associated with the marked points (see \cite[Sec.\ XIII.3]{ACG2}).
\ed

\section{Periods near a separating node}\label{periods-sec}

\subsection{Differentials and cohomology}

In this section we work in the complex analytic category.

Let us consider a family of stable curves $\pi:C\to S$, where $S$ is a disk, the total space $C$ is smooth, and $\pi$ is smooth over $S\setminus 0$.
Let $\om_{C/S}$ denote the relative dualizing sheaf.
It is well known that in this case the higher direct images of the relative logarithmic de Rham complex, which can be identified with 
$$\ov{\HH}^i:=R^i\pi_*[\OO_C\to \om_{C/S}],$$
are vector bundles on $S$ and give a canonical extension of the local systems
$$\HH^i:=R^i\pi_*\pi^{-1}\OO_{S\setminus 0}$$ from $S\setminus 0$ 
(see \cite[Thm.\ 2.18]{Steenbrink}).

Thus, the coboundary map
$$\pi_*(\om_{C/S})\to R^1\pi_*[\OO_C\to \om_{C/S}]$$
extends the standard embedding of $\pi_*\om_{C/S}$ into $R^1\pi_*\pi^{-1}\OO_S$ defining periods over $S\setminus 0$.

Furthermore, we claim that the formation of the higher direct image $\ov{\HH}^1=R^1\pi_*[\OO_C\to \om_{C/S}]$ is compatible with the base change 
from $S$ to the $n$-th infinitesimal neighborhoods $S_n\sub S$ of $0\in S$. Indeed, even though the differential $\OO_C\to \om_{C/S}$ is not $\OO_C$-linear,
computing the direct image using a Cech resolution, one arrives in a standard way to the setup of the cohomology base change formalism as in \cite[Thm.\ II.5, p.46]{Mum-av}.
Thus, the computation of the cohomology of restriction of $[\OO_C\to \om_{C/S}]$ over the special fiber 
(see \cite[Thm.\ 2.18]{Steenbrink}), implies that we have a cohomology base change isomorphism
$$\ov{\HH}^1|_{S_n}\simeq H^1(C_n,[\OO_{C_n}\to \om_{C_n/S_n}]),$$
where $C_n\to S_n$ is the base change family over $S_n$.
It is also well known that the formation of $\pi_*(\om_{C/S})$ for families of stable curves is compatible with the base change.



Now we start with a stable curve $C_0$ which is the union of the smooth components $C_1$ and $C_2$, such that points $p_1\in C_1$ and $p_2\in C_2$ are glued into 
a node $p_0\in C_0$.
We consider any deformation $\pi:\CC\to S_n$ of a stable curve $C_0$ over $S_n$, inducing a nontrivial deformation of the node singularity over $S_1$.
Such a deformation always arises in a fashion described above, so the computation of the map
\begin{equation}\label{Pi-period-map}
\Pi: \pi_*(\om_{\CC/S_n})\to R^1\pi_*[\OO_{\CC}\to \om_{\CC/S_n}]
\end{equation}
would give us information about the period map near $C_0$.

We can compute  $R^i\pi_*[\OO_{\CC}\to \om_{\CC/S_n}]$ using the Cech complex with respect to an acyclic flat covering
$\CC=U_1\cup U_2$, where $U_1=\CC\setminus \{p_0\}$
and $U_2$ is the formal neighborhood of $p_0$ in $\CC$. 
Note that since $C_0\setminus \{p_0\}=(C_1\setminus \{p_1\})\sqcup (C_2\setminus \{p_2\})$ is smooth and affine, $U_1$ is a trivial deformation of $C_0\setminus \{p_0\}$.
On the other hand, we can identify $U_2$ with the standard deformation $\OO_{S_n}[\![x_1,x_2]\!]/(x_1x_2-q)$, where $q$ is a coordinate on $S$ (where $x_i$ is a formal parameter
at $p_i\in C_i$, for $i=1,2$). Then
$\OO(U_{12})=\OO_{S_n}(\!(x_1)\!)\oplus \OO_{S_n}(\!(x_2)\!)$.

\begin{lemma}\label{Cech-degen-lem}
The natural projection induces a quasi-isomorphism
$$\CC[\OO_\CC\to \om_{\CC/S_n}]\to [\OO(U_1)\rTo{d} \om_{\CC/S_n}(U_1)\to \om_{\CC/S_n}(U_{12})/(d\OO(U_{12})+\om_{\CC/S_n}(U_2))].$$
Furthermore,
$$[\OO(U_1)\to \om_{\CC/S_n}(U_1)]$$ is a subcomplex in the above complex, and the embedding induces an isomorphism on $H^1$. Thus, we get an identification 
$$R^1\pi_*[\OO_\CC\to \om_{\CC/S_n}]\simeq \om_{\CC/S_n}(U_1)/d\OO(U_1).$$ 
\end{lemma}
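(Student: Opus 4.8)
The plan is to compute $R\pi_*[\OO_\CC\to\om_{\CC/S_n}]$ through the total complex of the Cech bicomplex of $\FF^\bullet:=[\OO_\CC\to\om_{\CC/S_n}]$ (placed in degrees $0,1$) for the covering $\CC=U_1\cup U_2$; as explained just before the statement this total complex $\check C^\bullet$ represents $R\pi_*\FF^\bullet$, and it is concentrated in degrees $0,1,2$ with $\check C^0=\OO(U_1)\oplus\OO(U_2)$, $\check C^1=\bigl(\om_{\CC/S_n}(U_1)\oplus\om_{\CC/S_n}(U_2)\bigr)\oplus\OO(U_{12})$, $\check C^2=\om_{\CC/S_n}(U_{12})$, the differential being the sum of the de Rham and the Cech differentials. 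The ``natural projection'' of the statement is the surjection of $\check C^\bullet$ onto $T^\bullet:=[\OO(U_1)\xrightarrow{d}\om_{\CC/S_n}(U_1)\to\om_{\CC/S_n}(U_{12})/(d\OO(U_{12})+\om_{\CC/S_n}(U_2))]$ that discards the $U_2$-summands in degrees $0,1$ and is the tautological quotient in degree $2$; a one-line check against the two differentials shows it is a morphism of complexes.

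To see it induces the claimed isomorphism on $H^1$ I would use the long exact cohomology sequence of $0\to K^\bullet\to\check C^\bullet\to T^\bullet\to 0$, whose kernel complex is
$$K^\bullet=\bigl[\,\OO(U_2)\xrightarrow{\partial^0}\om_{\CC/S_n}(U_2)\oplus\OO(U_{12})\xrightarrow{\partial^1}d\OO(U_{12})+\om_{\CC/S_n}(U_2)\,\bigr],$$
with $\partial^0(a)=(da,-\iota(a))$ and $\partial^1(\eta,c)=-\eta|_{U_{12}}-dc$, where $\iota\colon\OO(U_2)\to\OO(U_{12})$ is the restriction $x_1\mapsto(x_1,q/x_2)$, $x_2\mapsto(q/x_1,x_2)$ of \eqref{iota-def}. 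Injectivity of $\iota$ (Lemma~\ref{A(R)-basic-lem}(i)) gives $H^0(K^\bullet)=0$, and the inclusion $\OO_{S_n}[\![x_1]\!]x_1^{n+1}\oplus\OO_{S_n}[\![x_2]\!]x_2^{n+1}\subset\im(\iota)$ from the same lemma shows $\partial^1$ is onto, so $H^2(K^\bullet)=0$. For the middle term the key points are that on $\OO_{S_n}(\!(x_i)\!)$ the differential $d$ is, up to sign, $x_i\tfrac{\partial}{\partial x_i}$, whose image is exactly the Laurent series with vanishing constant term (we are in characteristic zero), and that $\iota$ carries the constant of $\OO(U_2)$ to the diagonal constant of $\OO(U_{12})$: given a cocycle $(\eta,c)$, writing $\eta=\gamma e$, one first deduces $\gamma$ has no constant term, hence has a primitive $a\in\OO(U_2)$ with $da=\eta$, and the remaining ambiguity of $a$ by a constant is fixed precisely when the two branch-constant-terms of $c$ agree. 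That mismatch is the image of the connecting map $H^0(T^\bullet)\to H^1(K^\bullet)$ --- it records the two connected components of $U_1$ in degree $0$ --- so this map is surjective, $H^1(K^\bullet)\to H^1(\check C^\bullet)$ vanishes, and the long exact sequence yields $H^1(\check C^\bullet)\xrightarrow{\sim}H^1(T^\bullet)$ and $H^2(\check C^\bullet)\xrightarrow{\sim}H^2(T^\bullet)$, which is what the lemma uses (in degree $0$ the map is the diagonal inclusion $\OO_{S_n}\hookrightarrow\OO_{S_n}^2$, which does not affect the statement about $R^1\pi_*$).

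For the second assertion the decisive input is the residue theorem. Since $U_1$ is the trivial deformation of $C_0\setminus\{p_0\}=(C_1\setminus\{p_1\})\sqcup(C_2\setminus\{p_2\})$, a section of $\om_{\CC/S_n}(U_1)$ restricts on the $i$-th component to an $\OO_{S_n}$-family of meromorphic differentials on $C_i$ with poles only at $p_i$, and such a differential has vanishing residue at $p_i$. Under the identification $\om_{\CC/S_n}(U_{12})=\OO_{S_n}(\!(x_1)\!)e\oplus\OO_{S_n}(\!(x_2)\!)e$, reading off the $e$-expansions shows that the image of a section in $\om_{\CC/S_n}(U_{12})/(d\OO(U_{12})+\om_{\CC/S_n}(U_2))$ is, up to sign, the difference of its residues at $p_1$ and $p_2$, both of which vanish; hence the composite $\om_{\CC/S_n}(U_1)\to\om_{\CC/S_n}(U_{12})\to\om_{\CC/S_n}(U_{12})/(d\OO(U_{12})+\om_{\CC/S_n}(U_2))$ is zero. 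Therefore $[\OO(U_1)\xrightarrow{d}\om_{\CC/S_n}(U_1)]$ is literally a subcomplex of $T^\bullet$, and in fact $T^\bullet$ is the direct sum of it with $\bigl(\om_{\CC/S_n}(U_{12})/(d\OO(U_{12})+\om_{\CC/S_n}(U_2))\bigr)[-2]$; the inclusion is thus an isomorphism on $H^1$, and combining with the first part gives $R^1\pi_*\FF^\bullet\simeq H^1\bigl([\OO(U_1)\to\om_{\CC/S_n}(U_1)]\bigr)=\om_{\CC/S_n}(U_1)/d\OO(U_1)$.

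The only genuine computation is the analysis of $H^1(K^\bullet)$ in the second step: one must handle simultaneously the two substitutions $x_2=q/x_1$ and $x_1=q/x_2$ defining $\iota$, track constant terms branch by branch, and use the explicit shape of $\im(\iota)$ from Lemma~\ref{A(R)-basic-lem}; everything else is formal. If this bookkeeping becomes unwieldy I would bypass $T^\bullet$ entirely and run the Mayer--Vietoris sequence for $R\Gamma(\CC,\FF^\bullet)$ directly: there $H^1(U_2)=\OO_{S_n}e$ maps isomorphically onto the diagonal of $H^1(U_{12})=(\OO_{S_n}\oplus\OO_{S_n})e$ while $H^1(U_1)\to H^1(U_{12})$ is the vanishing residue map, so $R^1\pi_*\FF^\bullet$ is cut out as $H^1(U_1)=\om_{\CC/S_n}(U_1)/d\OO(U_1)$ at once, with $R^2\pi_*\FF^\bullet$ the rank-one quotient $\OO_{S_n}e$.
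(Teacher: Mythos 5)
Your argument is correct, and it reaches the lemma's conclusion by a genuinely different (and in fact more careful) route than the paper. The paper quotients the Cech total complex in two steps, first by the acyclic ``$U_2$-row'' $[\OO(U_2)\to\OO(U_2)\oplus\om_{\CC/S_n}(U_2)\to\om_{\CC/S_n}(U_2)]$ and then by $[\OO(U_{12})/\OO(U_2)\xrightarrow{d}(d\OO(U_{12})+\om_{\CC/S_n}(U_2))/\om_{\CC/S_n}(U_2)]$, asserting that $d$ is injective on $\OO(U_{12})/\OO(U_2)$ so that the projection is an outright quasi-isomorphism; you instead take the kernel $K^\bullet$ of the full projection in one step and run the long exact sequence, computing $H^1(K^\bullet)$ and the connecting map explicitly. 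Your computation buys something real: the paper's injectivity claim is actually false -- the pair of constants $(1,0)\in\OO(U_{12})$ has $d(1,0)=0$ but does not lie in $\iota(\OO(U_2))$, since elements of the image have equal constant terms on the two branches (this persists modulo $q$, so the paper's ``check modulo $(t)$'' does not save it). Correspondingly the projection is not an isomorphism on $H^0$: $H^0$ of the Cech complex is $\OO_{S_n}$ (constants on the connected curve $\CC$) while $H^0$ of the target is $\OO_{S_n}^{\oplus 2}$, exactly the rank-one discrepancy you detect as $H^1(K^\bullet)\simeq(\OO_{S_n}\oplus\OO_{S_n})/\OO_{S_n}$ killed by the surjective connecting map from $H^0$ of the quotient complex. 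So the lemma as literally stated (``quasi-isomorphism'') should be weakened to an isomorphism on $H^1$ and $H^2$, which is precisely what you prove and all that the identification $R^1\pi_*[\OO_\CC\to\om_{\CC/S_n}]\simeq\om_{\CC/S_n}(U_1)/d\OO(U_1)$ requires; your second step, the vanishing of $\om_{\CC/S_n}(U_1)\to\om_{\CC/S_n}(U_{12})/d\OO(U_{12})$ via the residue theorem and the triviality of the deformation $U_1/S_n$, is the same as the paper's. Two cosmetic quibbles: the citation of Lemma \ref{A(R)-basic-lem} for surjectivity of $\partial^1$ is unnecessary (the degree-two term of $K^\bullet$ is by definition $d\OO(U_{12})+\om_{\CC/S_n}(U_2)$), and ``discards the $U_2$-summands in degrees $0,1$'' should also mention discarding $\OO(U_{12})$ in degree $1$, as your own formula for $K^1$ makes clear.
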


\begin{proof}
First, let us consider the short exact sequence of complexes, where $\CC^\bullet=\CC[\OO_\CC\to \om_{\CC/S_n}]$.
\begin{diagram}
0&\rTo{}&\OO(U_2)&\rTo{}&\OO(U_2)\oplus \om_{\CC/S_n}(U_2)&\rTo{}& \om_{\CC/S_n}(U_2)&\rTo{}&0\\
&&\dTo{}&&\dTo{}&&\dTo{}\\
0&\rTo{}&\CC^0&\rTo{}&\CC^1&\rTo{}&\CC^2&\rTo{}&0\\
&&\dTo{}&&\dTo{}&&\dTo{}\\
0&\rTo{}&\OO(U_1)&\rTo{}&\OO(U_{12})/\OO(U_2)\oplus \om_{\CC/S_n}(U_1)&\rTo{}& \om_{\CC/S_n}(U_{12})/\om_{\CC/S_n}(U_2)&\rTo{}&0\\
\end{diagram}
Since the first row is exact, the first assertion follows from the fact that the map 
$$d: \OO(U_{12})/\OO(U_2)\to \om_{\CC/S_n}(U_{12})/\om_{\CC/S_n}(U_2)$$
is injective. The latter injectivity can be checked as follows: both modules are free over $\OO(S_n)=\C[q]/(q^{n+1})$, so the injectivity follows from
the injectivity modulo $(q)$.

Recall that $U_1$ is a trivial deformation of $(C_1\setminus \{p_1\})\sqcup (C_2\setminus \{p_2\})$.
The map $\om_{\CC/S_n}(U_1)\to \om_{\CC/S_n}(U_{12})$ sends a pair of diffferentials $(\om_1,\om_2)$, where 
$\om_i\in H^0(C_i\setminus\{i\},\om_{C_i})\ot \C[q]/(q^{n+1})$, for $i=1,2$, to their expansions in the formal parameters at $p_1$, $p_2$.
By the residue theorem, the coefficients of $dx_1/x_1$ and $dx_2/x_2$ in these expansions will be trivial.
Hence, the map 
$$\om_{\CC/S_n}(U_1)\to \om_{\CC/S_n}(U_{12})/d\OO(U_{12})$$
 is zero, which implies the second assertion.
\end{proof}

Using Lemma \ref{Cech-degen-lem} and the triviality of the deformation $U_1/S_n$ we can view the map \eqref{Pi-period-map} as a morphism
of $\C[q]/(q^{n+1})$-modules
\begin{equation}\label{Pi-period-map-bis}
\begin{array}{l}
\Pi:H^0(\CC,\om_{\CC/S_n})\to \om_{\CC/S_n}(U_1)/d\OO(U_1)\simeq \\
\bigl(\om_{C_1}(C_1\setminus\{p_1\})/d\OO(C_1\setminus\{p_1\})\oplus \om_{C_2}(C_2\setminus\{p_2\})/d\OO(C_2\setminus\{p_2\})\bigr)\ot \C[q]/(q^{n+1}),
\end{array}
\end{equation}
induced by the restriction map $H^0(\CC,\om_{\CC/S_n})\to \om_{\CC/S_n}(U_1)/d\OO(U_1)$.
Note since the map $\Pi$ in injective modulo $q$, it defines a split embedding of free $\C[q]/(q^{n+1})$-modules.

\subsection{Basis of differentials regular outside a point}\label{basis-diff-sec}

Let $C$ be a smooth projective curve of genus $g\ge 1$, $p\in C$ a point such that $h^1(\OO(gp))=0$ (i.e., $p$ is not a Weierstrass point),
and $z$ a formal parameter at $p$.

\begin{lemma}\label{basis-diff-lem} 
(i) There is a unique basis $\a[0],\ldots,\a[g-1]$ of regular differentials on $C$ with $\a[i]=(z^i+O(z^g))dz$, for $i=0,\ldots,g-1$. 

\noindent
(ii) For each $n\ge 2$, there is a unique rational differential $\om[-n]\in H^0(C,\om_C(np))$ with $\om[-n]=(z^{-n}+O(z^g))dz$.
The differentials $(\a[0],\ldots,\a[g-1],\om[-2],\om[-3],\ldots)$ form a basis of $\om_C(C\setminus p)$.

\noindent
(iii) For every nonzero tangent vector $v$ at $p$ there is a unique formal parameter $z$ at $p$ such that $v(t)=1$ and
$z^{g-1}dz$ extends to a regular differential on $C$, i.e., $\a[g-1]=z^{g-1}dz$.
\end{lemma}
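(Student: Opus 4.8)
The plan is to prove the three parts of Lemma \ref{basis-diff-lem} essentially by linear algebra on the space of rational differentials, using only the Riemann--Roch facts $h^0(C,\om_C) = g$, $h^0(C,\OO_C(np))=n-g+1$ for $n\ge 2g-1$, together with the hypothesis $h^1(\OO(gp))=0$, i.e.\ $h^0(\OO(gp))=1$ (so $p$ is not a Weierstrass point).

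For (i), first I would observe that since $h^0(C,\OO(gp))=1$, the only regular functions with a pole of order $\le g$ at $p$ are constants; dually, by Serre duality $h^1(\OO(gp)) = h^0(\om_C(-gp)) = 0$, so no nonzero regular differential vanishes to order $\ge g$ at $p$. Hence the map $H^0(C,\om_C)\to R[z]/(z^g)$ sending a differential to the truncation of its $z$-expansion (writing $\om = (\sum_{i\ge 0} c_i z^i)dz$) is injective between two $g$-dimensional spaces, hence an isomorphism. The preimages of $z^0,\dots,z^{g-1}$ give the unique basis $\a[0],\dots,\a[g-1]$ with $\a[i] = (z^i + O(z^g))dz$.

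For (ii), for each $n\ge 2$ I would consider $H^0(C,\om_C(np))$, which has dimension $g + n - 1$ by Riemann--Roch (and $h^1 = h^0(\OO(-np)) = 0$). The leading-coefficient / expansion map to the span of $z^{-n}, z^{-n+1}, \dots, z^{g-1}$ modulo $O(z^g)$ — a space of dimension $n + g$ — has kernel exactly the differentials with a zero of order $\ge g$ at $p$ that also lie in $H^0(\om_C(np))$, but such a differential lies in $H^0(\om_C(-gp)) = 0$... more carefully: the expansion lands in $z^{-n}R[z,z^{-1}]/z^g R[z]$ but the residue (coefficient of $z^{-1}$) must vanish by the residue theorem, cutting the target down to dimension $n+g-1$, matching the source; injectivity again follows from $h^0(\om_C(-gp))=0$. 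This yields the unique $\om[-n]$ with prescribed principal-plus-low-order part and no $z^{-1}$ term, for each $n\ge 2$. Taking the union over all $n$ and using that $\om_C(C\setminus p) = \bigcup_n H^0(C,\om_C(np))$ with the filtration by pole order, the differentials $\a[0],\dots,\a[g-1],\om[-2],\om[-3],\dots$ are echelon with respect to the pole-order filtration (each new $\om[-n]$ strictly increases the pole order), so they form a basis.

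For (iii), I would fix any formal parameter $w$ with $v(w) = 1$ and apply part (i) to get $\a[g-1] = (w^{g-1} + a_g w^g + a_{g+1}w^{g+1} + \cdots)\,dw = d\!\left(\tfrac{1}{g}w^g + \tfrac{a_g}{g+1}w^{g+1} + \cdots\right)$ for suitable $a_j$. Setting $z := \left(g\int \a[g-1]\right)^{1/g}$, i.e.\ choosing $z$ with $z^g = g\cdot(\text{the above antiderivative}) = w^g(1 + \tfrac{g a_g}{g+1}w + \cdots)$, extracting the $g$-th root that agrees with $w$ to first order, one gets a new formal parameter $z = w + O(w^2)$ with $z^{g}dz = d(z^g/g) = \a[g-1]$, hence $z^{g-1}dz = \a[g-1]$ is regular; and $v(z) = v(w) = 1$. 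Uniqueness: if $z'$ is another such parameter with $v(z')=1$, then $z'^{g-1}dz' = d(z'^g/g)$ must equal the globally-defined $\a[g-1] = d(z^g/g)$, so $z'^g = z^g + (\text{const})$; evaluating at $p$ forces the constant to be $0$, and then $z' = \zeta z$ for a $g$-th root of unity $\zeta$, which the normalization $z'\equiv z \bmod (z^2)$ pins to $\zeta = 1$.

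I expect the main subtlety to be the bookkeeping of the residue constraint in part (ii) — making sure the dimension count of source and target of the expansion map agrees precisely after removing the forced vanishing of the $z^{-1}$-coefficient — and, in part (iii), justifying the formal $g$-th root extraction over a general base ring $R$ (here $g$ is invertible since we work over $\Q$, and $1 + (\text{higher order in } w)$ has a unique $g$-th root congruent to $1$, so this is harmless, but it should be stated cleanly).
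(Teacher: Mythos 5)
Your proposal is correct and follows essentially the same route as the paper: parts (i) and (ii) rest on the same facts ($h^0(\om_C(-gp))=0$ from the non-Weierstrass hypothesis, the Riemann--Roch count $h^0(\om_C(np))=g+n-1$, and the residue theorem), with your truncation-map isomorphism just repackaging the paper's inductive echelon construction. In (iii) you solve the same equation $\a[g-1]=u^{g-1}\,du$ by taking an antiderivative and extracting a $g$-th root instead of the paper's term-by-term coefficient recursion, which is a cosmetic difference (both need characteristic zero), and your uniqueness argument is fine once one notes, via part (i), that any regular extension of $(z')^{g-1}dz'$ must coincide with $\a[g-1]$ because their difference lies in $H^0(\om_C(-gp))=0$.
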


\begin{proof}
(i) The condition $H^1(\OO(gp))=0$ implies that $H^0(\om_C(-gp))=0$ and $h^1(\om_C(-gp))=1$.
Hence, for every $i=0,\ldots,g$, we have $h^0(\om_C(-(g-i)p))=i$. Now we can prove by induction on $i$
that there is a unique basis $\a[g-i],\ldots,\a[g-1]$ of $H^0(\om_C(-(g-i)p))$ with 
$\a[j]=(z^j+O(z^g))dz$. For $i=0$ such a basis is empty. Assume we have such a basis of
$H^0(\om_C(-(g-i)p))$. Let us choose an element 
$\a\in H^0(\om_C(-(g-i+1)p))\setminus H^0(\om_C(-(g-i)p))$. Rescaling $\a$ we can assume
that $\a=(z^{g-i+1}+O(z^{g-i}))dz$. Adding to $\a$ a linear combination of $\a[g-i],\ldots,\a[g-1]$ we 
get the required element $\a[g-i+1]=(z^{g-i+1}+O(z^g))dz$. The uniqueness is clear.

\noindent
(ii) This easily follows from the fact that $h^0(\om_C(np))=g-1+n$ for $n\ge 2$ and
from the fact that elements of $H^0(\om_C(np))$ have zero residue at $p$.

\noindent
(iii) Let us start with an arbitrary formal parameter $z$, and construct a basis of $H^0(\om_C)$ as above. 
Let 
$$\a[g-1]=(z^{g-1}+a_gz^g+\ldots)dz.$$
We have to check that there is a unique series
$u(z)=z+c_2z^2+\ldots$ such that 
$$\a[g-1]=u(z)^{g-1}\cdot du(z)=u(z)^{g-1}u'(z)dz,$$
i.e., 
$$(1+c_2z+c_3z^2+\ldots)^{g-1}\cdot (1+2c_2z+3c_3z^2+\ldots)=1+a_gz+a_{g+1}z^2+\ldots.$$
We can solve this for $(c_i)_{i\ge 2}$ step by step: 
$$(g+1)c_2=a_g, \ \ (g+2)c_3+(g-1)(g-2)c_2^2=a_{g+1}, \text{ etc.}$$
\end{proof}

Let us consider the basis $(\a[0],\ldots,\a[g-1],\om[-2],\om[-3],\ldots)$ of $\om_C(C\setminus p)$.
Let us consider the expansions
$$\a[i]=(z^i+z^g\sum_{n\ge 0}\a_n[i] z^n)dz, \ \text{ for } i=0,\ldots,g-1,$$
$$\om[-i]=(z^{-i}+z^g\sum_{n\ge 0}\om_n[-i] z^n)dz,  \ \text{ for } i\ge 2.$$
Note that the special choice of a parameter $z$ as in Lemma \ref{basis-diff-lem}(iii) corresponds to $\a_n[g-1]=0$ for $n\ge 0$.
From now on we assume that $z$ is chosen in this way.

\begin{lemma}
(i) For each $n\ge 1$, let $f[-g-n]$ denote the unique element of $H^0(C,\OO_C((g+n)p))$, up to additive constant, with $f[g+n]=z^{-g-n}+O(z^{-g})$.
Then up to an additive constant we have the expansion
\begin{align*}
&-f[-g-n]+\frac{1}{z^{g+n}}=\frac{\a_{n-1}[g-2]}{z^{g-1}}+\frac{\a_{n-1}[g-3]}{z^{g-2}}+\ldots+\frac{\a_{n-1}[0]}{z}\\
&+\om_{n-1}[-2]\cdot z+\om_{n-1}[-3]\cdot z^2+\ldots+\om_{n-1}[-g-1]\cdot z^g+O(z^{g+1}).
\end{align*}

\noindent
(ii) For every $n\ge 1$, we have 
\begin{align*}
&df[-g-n]+(g+n)\om[-g-n-1]=\\
&(g-1)\a_{n-1}[g-2]\om[-g]+(g-2)\a_{n-1}[g-3]\om[-g+1]+\ldots+\a_{n-1}[0]\om[-2]\\
&-\om_{n-1}[-2]\a[0]-2\om_{n-1}[-3]\a[1]-\ldots-g\om_{n-1}[-g-1]\a[g-1].
\end{align*}
\end{lemma}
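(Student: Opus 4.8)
The plan is to prove (i) by extracting the Laurent coefficients of $f[-g-n]$ at $p$ one at a time via the residue theorem, and then to obtain (ii) by differentiating the expansion in (i) and re-expanding the result in the basis $(\a[0],\ldots,\a[g-1],\om[-2],\om[-3],\ldots)$ of $\om_C(C\setminus p)$.

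For (i): since $p$ is not a Weierstrass point, the non-gaps at $p$ are $0,g+1,g+2,\ldots$, so $\dim H^0(C,\OO_C((g+n)p))=n+1$ and the only functions with a pole of order $\le g$ at $p$ are constants; extracting the top $n$ Laurent coefficients then gives a surjection onto $\C^n$ with kernel the constants, which both makes sense of the defining condition $f[-g-n]=z^{-g-n}+O(z^{-g})$ and shows $f[-g-n]$ is unique up to an additive constant. I would write $f:=f[-g-n]=\sum_{k\ge -g-n}c_kz^k$ with $c_{-g-n}=1$ and $c_{-g-n+1}=\cdots=c_{-g-1}=0$. Because $f$ has poles only at $p$, for any $\eta\in\om_C(C\setminus p)$ the differential $f\eta$ has poles only at $p$, so $\Res_p(f\eta)=0$. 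Applying this with $\eta=\a[i]$ for $i=0,\ldots,g-1$ and with $\eta=\om[-j]$ for $j=2,\ldots,g+1$, and using that the expansions $\a[i]=(z^i+z^g\sum_{m\ge 0}\a_m[i]z^m)dz$ and $\om[-j]=(z^{-j}+z^g\sum_{m\ge 0}\om_m[-j]z^m)dz$ are ``silent'' in all degrees strictly between the leading term and $z^g$, the coefficient of $z^{-1}$ in $f\a[i]$ works out to $\a_{n-1}[i]+c_{-1-i}$ and that in $f\om[-j]$ to $\om_{n-1}[-j]+c_{j-1}$. Setting these to zero gives $c_{-1-i}=-\a_{n-1}[i]$ for $i=0,\ldots,g-1$ and $c_{j-1}=-\om_{n-1}[-j]$ for $j\ge 2$. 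In particular $c_{-g}=-\a_{n-1}[g-1]=0$ by the special choice of $z$ in Lemma \ref{basis-diff-lem}(iii), and dropping the free constant $c_0$ yields exactly the formula in (i).

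For (ii): differentiating the expansion of (i) shows that $df[-g-n]$ has leading term $-(g+n)z^{-g-n-1}dz$, no terms of order $z^{-g-n},\ldots,z^{-g-1}$ (the corresponding $c_k$ vanish), coefficient $(m-1)\a_{n-1}[m-2]$ in degree $z^{-m}dz$ for $2\le m\le g$, and coefficient $-i\,\om_{n-1}[-(i+1)]$ in degree $z^{i-1}dz$ for $1\le i\le g$. Since $\om[-g-n-1]=(z^{-g-n-1}+O(z^g))dz$, the differential $\xi:=df[-g-n]+(g+n)\om[-g-n-1]$ has pole order $\le g$ at $p$; being a global differential with poles only at $p$ it has zero residue there, so it lies in the $(2g-1)$-dimensional space $\om_C(gp)$, which is spanned by $\a[0],\ldots,\a[g-1],\om[-2],\ldots,\om[-g]$. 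As these basis elements are again silent between their leading terms and $z^g$, the coordinates of $\xi$ are read off triangularly from its expansion through degree $z^{g-1}$: the coefficient of $\om[-j]$ equals the coefficient of $z^{-j}dz$ in $\xi$ for $2\le j\le g$, and the coefficient of $\a[i]$ equals the coefficient of $z^i dz$ in $\xi$ for $0\le i\le g-1$. Substituting the values above gives the identity in (ii).

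The computations are essentially mechanical, so the work is mainly bookkeeping: matching index shifts such as $c_{-1-i}\leftrightarrow\a_{n-1}[i]$ and $c_{j-1}\leftrightarrow\om_{n-1}[-j]$, checking that the normalization $f=z^{-g-n}+O(z^{-g})$ can be achieved and that it forces the $z^{-g}$-coefficient to be $-\a_{n-1}[g-1]$, and --- the point I expect to be the subtlest --- verifying that $\xi$ has no poles of the intermediate orders $g+1,\ldots,g+n$ at $p$, since this is precisely what legitimizes the triangular read-off of its coordinates inside $\om_C(gp)$.
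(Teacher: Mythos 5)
Your proposal is correct and follows essentially the same route as the paper: part (i) is exactly the residue-theorem computation $\Res_p(f[-g-n]\a[i])=\Res_p(f[-g-n]\om[-j])=0$, and part (ii) is obtained by differentiating the expansion from (i) and using that a differential with poles only at $p$ and vanishing to order $g$ there lies in $H^0(C,\om_C(-gp))=0$ (your triangular read-off in the basis of $H^0(C,\om_C(gp))$ is just a rephrasing of this). Your identification of the subtle point — that the normalization $c_{-g-n+1}=\cdots=c_{-g-1}=0$ together with $\a_{n-1}[g-1]=0$ kills the intermediate polar terms of $df[-g-n]+(g+n)\om[-g-n-1]$ — is exactly what makes the paper's one-line conclusion legitimate.
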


\begin{proof}
(i) This follows from the residue theorem which gives equations
$$\Res_p (f[-g-n]\cdot \a[i])=\Res_p (f[-g-n]\cdot\om[-j])=0.$$

\noindent
(ii) Differentiating the expansion of $f[-g-n]$ found in part (i), we get
\begin{align*}
&\frac{df[-g-n]}{dz}=\frac{g+n}{z^{g+n+1}}+\frac{(g-1)\a_{n-1}[g-2]}{z^g}+\ldots
+\frac{\a_{n-1}[0]}{z^2}\\
&-\om_{n-1}[-2]-2\om_{n-1}[-3]z-\ldots-g\om_{n-1}[-g-1]z^{g-1}+O(z^g),
\end{align*}
which implies that the difference between both sides in the claim identity belongs
to $H^0(C,\om_C(-gp))=0$.
\end{proof}

\begin{cor}\label{H1-forms-cor}
The rational forms $\a[0],\ldots,\a[g-1],\om[-2],\ldots,\om[-g-1]$ form a basis of $H^1(C,\C)$, identified with
the quotient of $\om_C(C\setminus p)/d\OO(C\setminus p)$. For $n\ge 1$, we have equality of classes in $H^1(C,\C)$,
\begin{align*}
& \om[-g-n-1]\equiv \frac{g-1}{g+n}\a_{n-1}[g-2]\om[-g]+\frac{g-2}{g+n}\a_{n-1}[g-3]\om[-g+1]+\ldots+\frac{1}{g+n}\a_{n-1}[0]\om[-2]\\
&-\frac{1}{g+n}\om_{n-1}[-2]\a[0]-\frac{2}{g+n}\om_{n-1}[-3]\a[1]-\ldots-\frac{g}{g+n}\om_{n-1}[-g-1]\a[g-1].
\end{align*}
\end{cor}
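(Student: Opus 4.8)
\textbf{Proof plan for Corollary \ref{H1-forms-cor}.} The plan is to isolate one genuinely external ingredient — the identification of the target as $H^1(C,\C)$, together with its dimension — and then to obtain the basis statement as a formal consequence of the recursion, which in turn is a one-line rewriting of part (ii) of the preceding lemma.

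First I would pin down the identification $\om_C(C\setminus p)/d\OO(C\setminus p)\simeq H^1(C,\C)$. Since $U:=C\setminus p$ is a smooth affine curve, its algebraic de Rham cohomology in degree $1$ is by definition $\Gamma(U,\om_C|_U)/d\Gamma(U,\OO_U)=\om_C(C\setminus p)/d\OO(C\setminus p)$ (there are no $2$-forms), and it agrees with $H^1(U,\C)$; alternatively this is the ``constant'' case of the \v Cech computation in the proof of Lemma \ref{Cech-degen-lem}, combined with the degeneration of the Hodge--de Rham spectral sequence for the proper smooth curve $C$. Next, the Gysin sequence for $p\hra C$ reads $0\to H^1(C,\C)\to H^1(U,\C)\to\C\to H^2(C,\C)\to H^2(U,\C)\to 0$, and the arrow $\C\to H^2(C,\C)$ sends $1$ to the fundamental cohomology class of $p$, hence is an isomorphism; therefore $H^1(C,\C)\to H^1(U,\C)$ is an isomorphism, and in particular the space in question has dimension $2g$.

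The displayed recursion is then immediate: it is exactly the identity of part (ii) of the preceding lemma, read modulo $d\OO(C\setminus p)$ — so that the term $df[-g-n]$ drops out — and divided by $g+n$. At this point I would simply observe that every $1$-form occurring on the right-hand side, namely $\a[0],\ldots,\a[g-1]$ together with $\om[-2],\ldots,\om[-g]$ (the quantities $\a_{n-1}[\cdot]$ and $\om_{n-1}[\cdot]$ being scalars), lies among the proposed basis vectors $\a[0],\ldots,\a[g-1],\om[-2],\ldots,\om[-g-1]$, so that the formula genuinely expresses $\om[-g-n-1]$ in terms of those $2g$ forms.

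Finally, the basis assertion follows. By Lemma \ref{basis-diff-lem}(ii) the forms $\a[0],\ldots,\a[g-1],\om[-2],\om[-3],\ldots$ span $\om_C(C\setminus p)$, hence their classes span the quotient; but by the recursion just obtained, the class of each $\om[-j]$ with $j\ge g+2$ (write $j=g+n+1$, $n\ge 1$) is a $\C$-linear combination of the classes of $\a[0],\ldots,\a[g-1],\om[-2],\ldots,\om[-g-1]$. Thus these $2g$ classes span the $2g$-dimensional space $H^1(C,\C)$ and therefore form a basis. (If one prefers, the linear independence, hence also the dimension $2g$, can instead be read off directly from the strictly increasing pole orders of the $df[-g-n]$, which make them triangular with respect to $\om[-g-2],\om[-g-3],\ldots$, exactly as in the proof of the preceding lemma; this makes the argument self-contained.) The only step that is not pure bookkeeping is the dimension count of the second paragraph, and I do not anticipate any serious obstacle there.
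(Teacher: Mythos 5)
Your argument is correct and is exactly the one the paper intends: the displayed relation is Lemma~(ii) read modulo $d\OO(C\setminus p)$ (killing $df[-g-n]$) and divided by $g+n$, and the basis claim follows from the spanning statement of Lemma \ref{basis-diff-lem}(ii) together with the standard identification $\om_C(C\setminus p)/d\OO(C\setminus p)\simeq H^1(C,\C)$ of dimension $2g$, which the paper treats as well known. Your extra remarks (Grothendieck/Gysin for the identification, or triangularity of the $df[-g-n]$ for independence) just make explicit what the paper leaves implicit, so there is nothing to correct.
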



\subsection{Differentials near a separating node} 


Let us consider the extended clutching construction for a pair of curves with formal parameters at marked points, $(C_1,p_1,x_1)$, $(C_2,p_2,x_2)$, such that
for $i=1,2$, genus of $C_i$ is $g_i\ge 1$ and $p_i$ is not a Weierstrass point on $C_i$, i.e., $H^1(C_i,\OO(g_ip_i))=0$.
Here we assume that the formal parameters $x_i$ are chosen as in Lemma \ref{basis-diff-lem}(iii). 
Let $\CC$ be the corresponding family of curves over $S_N=\Spec(A_N)$, where $A_N=\C[q]/(q^{N+1})$.
We want to construct a special basis of the bundle $\pi_*\om_{\CC/S_N}$ over $S_N$.

The sections of $\pi_*\om_{\CC/S_N}$ can be described by the quadruples
$(\om_1,\phi_1(x_1),\om_2,\phi_2(x_2))$, 
$\om_i$ being a global form on $(C_i\setminus \{p_i\})_{A_N}$ and $\phi_i(x_i)\in A_N[\![x_i]\!]$, such that
\begin{equation}\label{global-differentials-gluing-eq}
\om_1(x_1)=\phi_1(x_1)dx_1-q \phi_2(\frac{q}{x_1})\frac{dx_1}{x_1^2}, \ \ \om_2(x_2)=\phi_2(x_2)dx_2-q \phi_1(\frac{q}{x_2})\frac{dx_2}{x_2^2}.
\end{equation}
Actually, $\phi_1(x_1)$ and $\phi_2(x_2)$ are uniquely determined by these equations from $\om_1$ and $\om_2$ (as regular parts in their expansions).

We consider the differentials $\om[-n]$ on $C_1$, regular outside $p_1$, defined as in Sec.\ \ref{basis-diff-sec}.
Let us denote by $\La_1\sub \om(C_1\setminus \{p_1\})$ the linear span of $(\om[-n])_{n\ge 2})$. Note that we have a direct sum decomposition
$$\om(C_1\setminus \{p_1\})=H^0(C_1,\om_{C_1})\oplus \La_1.$$
Similarly, we define the subspace $\La_2\sub \om(C_2\setminus \{p_2\})$, where we denote the basis in $\La_2$ by $(\om'[-n])_{n\ge 2}$.

\begin{prop}\label{differential-q-exp-prop} 
(i) For every regular differential $\a$ on $C_1$ there exists a unique global section $\Phi_1(\a)$ of $\om_{\CC/S_N}$ with
$$\om_1\equiv \a \mod qA_N\ot \La_1, \ \ \om_2\equiv 0 \mod qA_N\ot \La_2.$$
This gives a linear embedding $\Phi_1:H^0(C_1,\om_{C_1})\to H^0(\om_{\CC/S_N})$.

\noindent
(ii) For $i=0,\ldots,g_1-1$, one has 
$\Phi_1(\a[i])=(\om_1,\om_2)$ with
$$\om_1\equiv \a[i]+q^{i+g_2+2}\om'_0[-i-2]\cdot\om[-g_2-2]+\ldots+q^{g_1+g_2+1}\om'_{g_1-i-1}[-i-2]\cdot \om[i-g_1-g_2-1] \mod q^{g_1+g_2+2},$$
$$\om_2\equiv -q^{i+1}\om'[-i-2]-q^{g_1+1}\a_0[i]\cdot\om'[-g_1-2]-\ldots-q^{g_1+g_2+1}\a_{g_2}[i]\cdot \om'[-g_1-g_2-2].$$

\noindent
(iii) Similarly, we define a linear embedding
$\Phi_2:H^0(C_2,\om_{C_2})\to H^0(\om_{\CC/S_N})$. The induced map 
$$(\Phi_1,\Phi_2):H^0(C_1,\om_{C_1})\ot A_N\oplus H^0(C_2,\om_{C_2})\ot A_N\to H^0(\CC,\om_{\CC/S_N})=H^0(S_N,\pi_*\om_{\CC/S_N})$$
is an isomorphism.
\end{prop}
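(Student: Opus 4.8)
The plan is to use the description of $H^0(\CC,\om_{\CC/S_N})$ furnished by the gluing equations \eqref{global-differentials-gluing-eq} and to solve the resulting finite linear system over the Artinian ring $A_N$ by a $q$-adic triangularity argument. Using the decomposition $\om(C_i\setminus\{p_i\})\ot A_N=\bigl(H^0(C_i,\om_{C_i})\ot A_N\bigr)\oplus(\La_i\ot A_N)$, a global differential $(\om_1,\om_2)$ satisfies $\om_1\equiv\a$ and $\om_2\equiv 0$ modulo $qA_N\ot\La_i$ exactly when $\om_1=\a+\sum_{j\ge 2}e_j\,\om[-j]$ and $\om_2=\sum_{j\ge 2}f_j\,\om'[-j]$ with all $e_j,f_j\in qA_N$. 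Comparing the principal parts in \eqref{global-differentials-gluing-eq}, using $\om[-j]=(x_1^{-j}+x_1^{g_1}\sum_n\om_n[-j]x_1^n)dx_1$ and its $C_2$-analogue and the vanishing of residues at $p_1,p_2$, turns \eqref{global-differentials-gluing-eq} into the system $e_j=-q^{j-1}\phi_{2,j-2}$, $f_j=-q^{j-1}\phi_{1,j-2}$ ($j\ge 2$), where $\phi_{1,k}$ (resp.\ $\phi_{2,k}$) is the coefficient of $x_1^k\,dx_1$ (resp.\ $x_2^k\,dx_2$) in the Taylor part of the expansion of $\om_1$ (resp.\ $\om_2$); by the expansions of $\a[i]$, $\om[-j]$, $\om'[-j]$ these are $A_N$-linear in $\a$ and $(e_j)$ (resp.\ in $(f_j)$). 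Note $q^{j-1}=0$ for $j\ge N+2$, so only finitely many $e_j,f_j$ occur.

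For part (i): since $\om_2$ has no regular part, its Taylor part starts in degree $g_2$, so $\phi_{2,k}=0$ for $k<g_2$; hence $e_j=0$ unless $j\ge g_2+2$, and likewise the $(e_j)$-contribution to $\phi_{1,k}$ appears only for $k\ge g_1$. Eliminating $(f_j)$ yields $e=G(\a)+M(e)$ (with $e=(e_j)$), where $G(\a)$ depends only on $\a$ and $M$ is $A_N$-linear and raises the $q$-adic order by at least $g_1+g_2+2$ (each pass through the two sets of equations contributes a factor $q^{j-1}$ with $j\ge g_2+2$ and a factor $q^{j'-1}$ with $j'\ge g_1+2$). Since $q$ is nilpotent, $M$ is nilpotent, so $e=\sum_{m\ge 0}M^m G(\a)$ is the unique solution; $e$ and the resulting $(f_j)$ lie in $qA_N$, and one recovers the section $\Phi_1(\a)$ uniquely. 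Linearity of $\a\mapsto\Phi_1(\a)$ is clear.

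For part (ii): take $\a=\a[i]$ and track $q$-orders. To leading order $\phi_{1,k}=\de_{k,i}$ for $k<g_1$ and $\phi_{1,k}=\a_{k-g_1}[i]$ for $k\ge g_1$, while the $(e_j)$-correction to $\phi_1$ is divisible by $q^{\,i+g_2+2}$ and hence alters $f_j$ only by a multiple of $q^{g_1+g_2+2}$. Thus $f_{i+2}\equiv-q^{i+1}$ and $f_{g_1+n+2}\equiv-q^{g_1+n+1}\a_n[i]$ modulo $q^{g_1+g_2+2}$, all other $f_j$ vanishing to that order, which yields the stated expansion of $\om_2=\sum_j f_j\,\om'[-j]$. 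Substituting the dominant term $f_{i+2}=-q^{i+1}$ into $\phi_{2,k}=\sum_{j'}f_{j'}\,\om'_{k-g_2}[-j']$ and then into $e_j=-q^{j-1}\phi_{2,j-2}$ gives $e_{g_2+2+m}\equiv q^{\,i+g_2+2+m}\,\om'_m[-i-2]$ modulo $q^{g_1+g_2+2}$ for $0\le m\le g_1-1-i$ (larger $m$, and the subleading $f_{j'}$, produce $q$-powers exceeding $g_1+g_2+1$); substituting into $\om_1=\a[i]+\sum_j e_j\,\om[-j]$ yields the stated expansion of $\om_1$. I expect this bookkeeping to be the bulk of the work, but it is entirely elementary.

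For part (iii): $\Phi_2$ is defined by the same construction with $(C_1,p_1,x_1)$ and $(C_2,p_2,x_2)$ exchanged. Since $\CC$ is a family of stable curves over the affine base $S_N$, the $A_N$-module $H^0(\CC,\om_{\CC/S_N})=H^0(S_N,\pi_*\om_{\CC/S_N})$ is free of rank $g=g_1+g_2$, and by the base-change compatibility of $\pi_*\om$ recalled above it reduces modulo $q$ to $H^0(C_0,\om_{C_0})=H^0(C_1,\om_{C_1})\oplus H^0(C_2,\om_{C_2})$, with $C_0$ the central fibre of $\CC$ and the node being separating. By part (i) and its $C_2$-counterpart all $q$-corrections in $\Phi_1(\a)$ and $\Phi_2(\b)$ lie in $qA_N$, so modulo $q$ the map $(\Phi_1,\Phi_2)$ is the identity of $H^0(C_1,\om_{C_1})\oplus H^0(C_2,\om_{C_2})$; being an isomorphism modulo the nilpotent ideal $(q)$, it is an isomorphism, by Nakayama.
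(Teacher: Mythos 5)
Your proposal is correct and follows essentially the same route as the paper: you solve the gluing equations \eqref{global-differentials-gluing-eq} for the polar coefficients order by order in $q$ (packaged as inverting $1-M$ with $M$ nilpotent, where the paper phrases it as induction on the $q$-order), and you prove (iii) by the same reduction-mod-$q$/Nakayama argument. The only cosmetic difference is in (ii): the paper verifies that the stated expansions satisfy the equations modulo $q^{g_1+g_2+2}$ and invokes the uniqueness from (i), whereas you derive them from the first steps of the iteration; the underlying bookkeeping with the expansions of $\a[i]$, $\om[-j]$, $\om'[-j]$ is the same.
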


\begin{proof}
(i) We solve the equations \eqref{global-differentials-gluing-eq} order by order in $q$. 
Modulo $q$ we require to have $\om_1\equiv \a$, $\om_2\equiv 0$, so $\phi_1(x_1)dx_1$ is the expansion of $\a$ at $p_1$ and $\phi_2=0$.
Assume now $N\ge 1$ and we already know $(\om_1,\phi_1,\om_2,\phi_2)$ modulo $q^N$.
Since $\phi_i(x_i)$ are regular, knowing $\phi_i$ modulo $q^N$ determines uniquely polar parts of $\om_1(x_1)$ and $\om_2(x_2)$ modulo $q^{N+1}$.
More precisely, we have
$$\om_1(x_1)_{\le -1}\equiv -q\phi_2(\frac{q}{x_1})\frac{dx_1}{x_1^2} \mod (q^{N+1}),$$
and similarly for the polar part of $\om_2(x_2)$.
Since $\phi_2(\frac{q}{x_1})\frac{dx_1}{x_1^2}\in \sspan(x_1^{-2},x_1^{-3},\ldots)$,
there exists a unique $\la_1\in qA_N\ot \La$ with
$$(\la_1)_{\le -1}=-q\phi_2(\frac{q}{x_1})\frac{dx_1}{x_1^2}.$$
Hence, the unique solution for $\om_1$ is $\om_1=\a+\la_1$. The same argument gives a unique solution for $\om_2$.
Now $\phi_i(x_i)$ modulo $q^{N+1}$ is determined as the regular part of the expansion of $\om_i(x_i)$:
$$\phi_i(x_i)dx_i=\om_1(x_1)_{\ge 0}.$$

\noindent
(ii) We just have to check that for these $\om_1$, $\om_2$ and for
$\phi_i(x_i)dx_i=\om_i(x_i)_{\ge 0}$, $i=1,2$, equations \eqref{global-differentials-gluing-eq} are satisfied modulo $q^{g_1+g_2+2}$.

First, let us calculate $\phi_1(\frac{q}{x_2})$ modulo $q^{g_1+g_2+1}$.
Recall that $\om[-g_2-2]_{\ge 0}=O(x_1^{g_1})$, hence only the term $\a[i]$ in the formula for $\om_1$ will contribute:
$$\phi_1(\frac{q}{x_2})\equiv \frac{q^i}{x_2^i}+\sum_{n\ge 0}\a_n[i]\frac{q^{n+g_1}}{x_2^{n+g_1}} \mod q^{g_1+g_2+1}.$$
Hence, we see that $-q\phi_1(\frac{q}{x_2})dx_2/x_2^2$ matches the polar part of $\om_2$ modulo $q^{g_1+g_2+2}$.

Similarly, when we compute $\phi_2(\frac{q}{x_1})$ modulo $q^{g_1+g_2+1}$, only the first term in the formula for $\om_2$
will contribute:
$$\phi_2(\frac{q}{x_1})\equiv -\sum_{n\ge 0}\om'_n[-i-2]\frac{q^{n+g_2+i+1}}{x_1^{n+g_2}} \mod q^{g_1+g_2+1}.$$
Again this implies that $-q\phi_2(\frac{q}{x_1})dx_1/x_1^2$ matches the polar part of $\om_1$ modulo $q^{g_1+g_2+2}$.

\noindent
(iii) Note that $H^0(S_N,\pi_*\om_{\CC/S_N})$ is a free $A_N$-module of a finite rank,
and its reduction modulo $A_N/qA_N$ gets identified with 
$$H^0(C_0,\om_{C_0})\simeq H^0(C_1,\om_{C_1})\oplus H^0(C_2,\om_{C_2}).$$ 
Now the assertion follows from the fact that our map $(\Phi_1,\Phi_2)$ reduces to the identity modulo $q$.
\end{proof}

Let 
$$\Pi:[H^0(C_1,\om_{C_1})\oplus H^0(C_2,\om_{C_2})]\ot A_N\to 
H^1(\CC,\pi^{-1}\OO_{S_N})\simeq [H^1(C_1,\C)\oplus H^1(C_2,\C)]\ot A_N$$
denote the composition of the isomorphism $(\Phi_1,\Phi_2)$ with the map \eqref{Pi-period-map-bis}.
We can expand $\Pi$ in powers of $q$: 
$$\Pi=\Pi_0+q\Pi_1+q^2\Pi_2+\ldots$$
Note that $\Pi_0$ is just the sum of the standard embeddings $H^0(C_j,\om_{C_j})\to H^1(C_j,\C)$, over $j=1,2$.
From Proposition \ref{differential-q-exp-prop} we can calculate $\Pi_j$ for $j\le g_1+g_2+1$.

\begin{cor}\label{periods-small-order-cor}
For $1\le j\le g_1+g_2+1$, and $0\le i\le g_1-1$, one has
$$\Pi_j(\a[i])=
(\om'_{j-i-g_2-2}[-i-2]\om[-j+i],-\de_{j,i+1}\om'[-i-2]-\a'_{j-g_1-1}[i]\cdot \om'[-j-1]),$$
where the term with the negative index is considered to be zero,
and the image of $\om[-n]$ in $H^1(C_1,\C)$ for $n\ge g_1+1$ is given by Corollary \ref{H1-forms-cor} 
(the image of $\om'[-n]$ in $H^1(C_2,\C)$ is given by similar formulas). 

In particular, for $1\le j\le \min(g_1,g_2)$, one has
$$\Pi_j(\a[j-1],0)=(0,-\om'[-j-1]), \ \ \Pi_j(0,\a'[j-1])=(-\om[-j-1],0)$$
and $\Pi_j$ is zero on all other basis vectors.
\end{cor}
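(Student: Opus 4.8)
The plan is to read off $\Pi_j$ directly from the explicit $q$-expansions of $\Phi_1(\a[i])$ and $\Phi_2(\a'[i])$ supplied by Proposition \ref{differential-q-exp-prop}(ii), and then to compose with the description of the period target from Lemma \ref{Cech-degen-lem} and equation \eqref{Pi-period-map-bis}. Recall that under \eqref{Pi-period-map-bis} the target of $\Pi$ is identified with
$$\bigl(\om_{C_1}(C_1\setminus\{p_1\})/d\OO(C_1\setminus\{p_1\})\oplus \om_{C_2}(C_2\setminus\{p_2\})/d\OO(C_2\setminus\{p_2\})\bigr)\ot A_N,$$
and the map is just the restriction $\om\mapsto(\om_1,\om_2)$ of a global section of $\om_{\CC/S_N}$ to $U_1=(C_1\setminus\{p_1\})\sqcup(C_2\setminus\{p_2\})$, followed by the quotient. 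So the first step is simply to take the formulas
$$\Phi_1(\a[i])=(\om_1,\om_2),\qquad \om_1\equiv \a[i]+\sum_{m\ge 0} q^{i+g_2+2+m}\,\om'_m[-i-2]\,\om[-g_2-2-m],\qquad \om_2\equiv -\sum_{m\ge 0}q^{i+1+m}\,(\dots)\,\om'[\dots],$$
extract the coefficient of $q^j$ in each component, and reduce the resulting rational differentials modulo exact differentials. The coefficient of $q^j$ in $\om_1$ is $\om'_{j-i-g_2-2}[-i-2]\,\om[-j+i]$ (understood as zero when $j-i-g_2-2<0$, i.e.\ the ``negative index'' convention in the statement), and the coefficient of $q^j$ in $\om_2$ is $-\de_{j,i+1}\om'[-i-2]-\a'_{j-g_1-1}[i]\,\om'[-j-1]$; this is exactly the claimed formula for $\Pi_j(\a[i])$. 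The symmetric statement for $\Phi_2$ gives the other half.

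The second step is the specialization to $1\le j\le\min(g_1,g_2)$. Here one just plugs $j=i+1$ into the general formula. In the first component, the index $j-i-g_2-2 = -g_2-1<0$, so that term vanishes by the negative-index convention; hence the first component is $0$. In the second component, $\de_{j,i+1}=1$ gives the term $-\om'[-i-2]=-\om'[-j-1]$, while the index $j-g_1-1 = i-g_1 \le \min(g_1,g_2)-1-g_1 < 0$ on the coefficient $\a'_{j-g_1-1}[i]$, so that term also drops out. This yields $\Pi_j(\a[j-1],0)=(0,-\om'[-j-1])$, and for any other basis vector $\a[i]$ with $i\ne j-1$ one checks that $\de_{j,i+1}=0$ and both remaining indices are negative in the relevant range, so $\Pi_j$ annihilates it; the statement for $\a'[j-1]$ is symmetric.

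One subtlety to address is that the differentials $\om[-n]$ appearing in the formula for $n\ge g_1+1$ are \emph{not} among the chosen basis $\a[0],\dots,\a[g_1-1],\om[-2],\dots,\om[-g_1-1]$ of $H^1(C_1,\C)$, so their classes must be rewritten via Corollary \ref{H1-forms-cor}; this is purely a change-of-basis bookkeeping step and is flagged in the statement itself (``the image of $\om[-n]$ in $H^1(C_1,\C)$ for $n\ge g_1+1$ is given by Corollary \ref{H1-forms-cor}''). I expect the only mild obstacle to be tracking the index shifts carefully — in particular making sure the ``term with negative index is zero'' convention is applied consistently in both components, and that the transition from $\Phi_1$ to $\Phi_2$ respects the genus labels $g_1,g_2$ correctly. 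No new ideas are needed beyond Proposition \ref{differential-q-exp-prop} and Corollary \ref{H1-forms-cor}; the corollary is essentially an unwinding of those two results.
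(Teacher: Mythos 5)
Your proposal is correct and is essentially the paper's (implicit) argument: the corollary is obtained exactly by extracting the coefficient of $q^j$ from the expansions of $\Phi_1(\a[i])$, $\Phi_2(\a'[i])$ in Proposition \ref{differential-q-exp-prop}(ii), using the identification \eqref{Pi-period-map-bis} of $\Pi$ as restriction to $U_1$ modulo exact differentials, with Corollary \ref{H1-forms-cor} handling the classes of $\om[-n]$, $n\ge g_1+1$. Your index bookkeeping for the ``negative index'' convention and the specialization $1\le j\le\min(g_1,g_2)$ checks out.
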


Looking only at $\Pi_1$ we get the following description of the tangent map to the period mapping, viewed as a map from the moduli space to
the Grassmannian.
Note that if $C_0$ is the stable curve glued from $C_1$ and $C_2$ as above (for $q=0$) then the period mapping sends $C_0$ to the point of the Grassmannian 
corresponding to the subspace
$$H^0(C_1,\om_{C_1})\oplus H^0(C_2,\om_{C_2})\sub H^1(C_1,\C)\oplus H^1(C_2,\C).$$
The tangent space to the Grassmannian at this point is identified with
$$T_{C_0}G=\Hom_{\C}(H^0(C_1,\om_{C_1})\oplus H^0(C_2,\om_{C_2}), H^1(C_1,\OO_{C_1})\oplus H^1(C_2,\OO_{C_2}).$$

\begin{prop}
Let $(C_1,p_1,x_1)$ (resp., $(C_2,p_2,x_2)$) be a smooth curve of genus $g_1\ge 1$ (resp., $g_2\ge 1$) with a marked point and a formal parameter at it.
Assume that $p_1$ (resp., $p_2$) is not a Weierstrass point. Consider the functionals 
$$\phi_i:H^0(C_i,\om_{C_i})\to \om_{C_i}|_{p_i}\simeq \C, i=1,2$$
and the vectors
$$\de_i:\C\simeq H^0(C_i,\OO_{C_i}(p_i)/\OO_{C_i})\to H^1(C_i,\OO_{C_i}).$$
Here we use the trivialization of $\om_{C_i}|_{p_i}$ (resp., of $\OO_{C_i}(p_i)/\OO_{C_i}$) induced by $dx_i$ (resp., $1/x_i$).
Then the derivative of the period map at $C_0=C_1\cup C_2$ with respect to the parameter $q$ (coming from the extended clutching construction)
is given by
$$(\de_2\phi_1,\de_1\phi_2)\in \Hom_{\C}(H^0(C_1,\om_{C_1}),H^1(C_2,\OO_{C_2}))\oplus \Hom(H^0(C_2,\om_{C_2}),H^1(C_1,\OO_{C_1}))\sub T_{C_0}G.$$
\end{prop}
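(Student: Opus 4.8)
The plan is to obtain this as a reformulation of Corollary \ref{periods-small-order-cor} in the case $j=1$, after making two things explicit: that the $q$-derivative of the period map is the linear term $\Pi_1$ of the expansion $\Pi=\Pi_0+q\Pi_1+\ldots$, read modulo the Hodge subspace; and that the distinguished basis vectors of Section \ref{basis-diff-sec} occurring in that formula are exactly the ones singled out by the functionals $\phi_i$ and by the classes $\de_i$.

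First I would identify the derivative. With $\Pi=\Pi_0+q\Pi_1+\ldots$ as in Corollary \ref{periods-small-order-cor}, over $S_1=\Spec\C[q]/(q^2)$ the period map corresponds to the $\C[q]/(q^2)$-submodule $\{\Pi_0(v)+q\Pi_1(v)\}$ of $(H^1(C_1,\C)\oplus H^1(C_2,\C))\ot\C[q]/(q^2)$, using the identification of $R^1\pi_*[\OO_\CC\to\om_{\CC/S_1}]$ with $(H^1(C_1,\C)\oplus H^1(C_2,\C))\ot\C[q]/(q^2)$. Since $\Pi_0$ is the sum of the two standard Hodge embeddings $H^0(C_j,\om_{C_j})\hra H^1(C_j,\C)$ (as noted before Corollary \ref{periods-small-order-cor}), this submodule corresponds to a tangent vector to the Grassmannian $G$ at $V_0:=H^0(C_1,\om_{C_1})\oplus H^0(C_2,\om_{C_2})$, namely the class of $\Pi_1$ in $\Hom(V_0,(H^1(C_1,\C)\oplus H^1(C_2,\C))/V_0)=T_{C_0}G$; the identification $H^1(C_j,\C)/H^0(C_j,\om_{C_j})\simeq H^1(C_j,\OO_{C_j})$ then shows that the $q$-derivative of the period map is the composite of $\Pi_1$ with the projection onto $H^1(C_1,\OO_{C_1})\oplus H^1(C_2,\OO_{C_2})$.

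Next I would evaluate this. Corollary \ref{periods-small-order-cor} with $j=1$, together with its mirror statement on $C_2$, gives for $0\le i\le g_1-1$ (resp.\ $0\le i\le g_2-1$, with $\a'[i]$ the analogous basis of $H^0(C_2,\om_{C_2})$):
\begin{equation*}
\Pi_1(\a[i],0)=\bigl(0,\,-\de_{i,0}\,\om'[-2]\bigr),\qquad \Pi_1(0,\a'[i])=\bigl(-\de_{i,0}\,\om[-2],\,0\bigr),
\end{equation*}
all the other terms in the Corollary's formula vanishing because their coefficients carry a negative index. Two identifications finish the job. First, $\a[i]=(x_1^i+O(x_1^{g_1}))dx_1$ vanishes at $p_1$ for $i\ge1$ whereas $\a[0]|_{p_1}=dx_1$, so the functional ``coefficient of $\a[0]$'' on $H^0(C_1,\om_{C_1})$ is precisely $\phi_1$ (and likewise $\phi_2$ on $C_2$). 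Second, the image of $\om[-2]$ in $H^1(C_1,\OO_{C_1})$ equals $-\de_1$ (and $\om'[-2]\mapsto-\de_2$ on $C_2$): writing $\om[-2]=(x_1^{-2}+O(x_1^{g_1}))dx_1=d(-x_1^{-1})+(\text{a differential regular at }p_1)$, in the Cech--de Rham description of $H^1_{\mathrm{dR}}(C_1)$ for the cover of $C_1$ by $C_1\setminus p_1$ and a neighborhood of $p_1$ the class of $\om[-2]$ is represented by the degree-$(1,0)$ cochain $-x_1^{-1}$, whose image under $H^1_{\mathrm{dR}}(C_1)\to H^1(C_1,\OO_{C_1})$ is the Cech class of $-x_1^{-1}$, i.e.\ $-\de_1$; equivalently, one verifies $\langle \om[-2],\b\rangle=-\Res_{p_1}(\b/x_1)$ for $\b\in H^0(C_1,\om_{C_1})$ via the residue theorem and invokes non-degeneracy of Serre duality.

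Assembling the pieces, the $q$-derivative of the period map sends $\a\in H^0(C_1,\om_{C_1})$ to $\bigl(0,\,\phi_1(\a)\,\de_2\bigr)=\bigl(0,(\de_2\phi_1)(\a)\bigr)$ and $\a'\in H^0(C_2,\om_{C_2})$ to $\bigl((\de_1\phi_2)(\a'),\,0\bigr)$, which is the element $(\de_2\phi_1,\de_1\phi_2)$ of the Proposition. I expect the one genuinely delicate step to be the sign in the claim that the image of $\om[-2]$ is $-\de_1$: one must fix mutually compatible Cech/orientation conventions for $H^1_{\mathrm{dR}}$, for the connecting homomorphism defining $\de_i$, and for the expansion $\Pi=\Pi_0+q\Pi_1+\ldots$, so that this minus sign cancels the one coming from Corollary \ref{periods-small-order-cor} and the final formula emerges with the stated plus sign.
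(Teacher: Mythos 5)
Your proposal is correct and follows essentially the same route as the paper: read off $\Pi_1$ from Corollary \ref{periods-small-order-cor} at $j=1$, identify $\phi_i$ as the ``coefficient of $\a[0]$'' (resp.\ $\a'[0]$) functional, and identify the class of $-\om[-2]$ (resp.\ $-\om'[-2]$) in $H^1(C_i,\OO_{C_i})$ with $\de_i$ via the formal primitive $\pm 1/x_i$. Your extra spelled-out steps (that the $q$-derivative of the period map is $\Pi_1$ taken modulo the Hodge subspace, and the residue/Serre-duality verification of the sign) only make explicit what the paper's proof does briefly.
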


\begin{proof} 
By Corollary \ref{periods-small-order-cor}, we have
$$\Pi_1(\a[0],0)=(0,-\om'[-2]), \ \ \Pi_1(0,\a'[0])=(-\om[-2],0)$$
and $\Pi_1$ is zero on all other basis vectors.

It is clear from the definition that $\phi_1$ is the functional on $H^0(C_i,\OO_{C_i}(p_i)$ sending $\a[i]$ with $i>0$ to zero, and sending $\a[0]$ to $1$.
It remains to check $\de_1$ is the image of $-\om[-2]$ under the composition
$$\om_{C_1}(C\setminus \{p_1\})\to H^1(C_1,[\OO_{C_1}\to \om_{C_1}])\to H^1(C_1,\OO_{C_1}).$$ 
This map sends a differential $\eta$ to the class of the Laurent series $\phi(x_1)$ such that $\eta=d\phi(x_1)$ in the formal neighborhood of $p_1$.
Since $-\om[-2]=-\frac{1}{x_1^2}+\ldots$, we have in our case $\phi(x_1)=-\frac{1}{x_1}+O(1)$, which implies our statement.
\end{proof}





\subsection{Separating node boundary in $g=2$ moduli}

Now we specialize to the case $g_1=g_2=1$.

First of all, for $g=1$ we can express all objects in Sec.\ \ref{basis-diff-sec} in terms of elliptic functions.
Let $C=\C/(\Z+\Z\tau)$ be an elliptic curve with $0$ as the marked point.
The canonical formal parameter of Lemma \ref{basis-diff-lem}(iii) is just the coordinate $z$ coming from the complex plane, with
$\a[0]=dz$.

The rational functions $f[-n]$, $n\ge 2$ (with a particular normalization of additive constants), are expressed in terms of 
the rescaled derivatives of the Weierstrass $\wp$-function:
$$f[-n]=(-1)^n \frac{1}{(n-1)!}\wp^{(n-2)}(z)-\frac{c_{n-2}}{n-1}=\frac{1}{z^n}+
(-1)^n\sum_{m\ge 1}{m+n-2\choose m}\frac{c_{m+n-2}}{n-1}z^m,$$
where 
$$\wp(z)=\frac{1}{z^2}+c_2z^2+c_4z^4+\ldots$$
(where $c_i=c_i(\tau)$ and $c_i=0$ for odd $i$ and $c_0=0$).
The above normalization of $f[-n]$ satisfies $f[-n]=z^{-n}+O(z)$.
Hence, the rational differentials $\om[-n]$ are given by
$$\om[-n]=f[-n]dz.$$

The basis of $H^1(C,\C)\simeq \om_C(C\setminus p)/d\OO(C\setminus p)$ is given by 
$\a[0]=dz$ and $\om[-2]=\wp(z)dz$.
Note that for $n\ge 3$ we have
$$\om[-n]\equiv -\frac{c_{n-2}}{n-1}\cdot dz$$
in $H^1(C,\C)$.

Now we consider the extended clutching applied to the elliptic curves $C_1=\C/(\Z+\Z\tau_1)$ and $C_2=\C/(\Z+\Z\tau_2)$ with the origins as the marked points.

We claim that $\Phi_1(dx_1)$ can be found in the form
$$\Phi_1(dz)=([1+\sum_{n\ge 1} a_{2n}(q)f[-2n](x_1,\tau_1)]dx_1, -q[f[-2](x_2,\tau_2)+\sum_{n\ge 2}b_{2n}(q)f[-2n](x_2,\tau_2)]dx_2),$$
for some series $a_{2n}(q)$, $n\ge 1$ and $b_{2n}(q)$, $n\ge 2$, depending on $\tau_1$ and $\tau_2$.
Indeed, we have
$$\phi_1(x_1)=1+\sum_{n\ge 1} a_{2n}(q)f[-2n](x_1,\tau_1)_{>0}, \ \ \phi_2(x_2)=-qf[-2](x_2,\tau_2)_{>0}-q\sum_{n\ge 2}b_{2n}(q)f[-2n](x_2,\tau_2)_{>0},$$
and equations \eqref{global-differentials-gluing-eq} can be rewritten as
$$\sum_{n\ge 1} \frac{a_{2n}(q)}{x_1^{2n}}=-\frac{q}{x_1^2}\phi_2(\frac{q}{x_1})=\frac{q^2}{x_1^2}\bigl(f[-2](\frac{q}{x_1},\tau_2)_{>0}+
\sum_{n\ge 2}b_{2n}(q)f[-2n](\frac{q}{x_1},\tau_2)_{>0}\bigr),$$
$$\sum_{n\ge 2}\frac{b_{2n}(q)}{x_2^{2n}}=\frac{1}{x_2^2}\sum_{n\ge 1} a_{2n}(q)f[-2n](\frac{q}{x_2},\tau_1)_{>0}.$$
These are equivalent to the following recursion for $(a_{2n})_{n\ge 1}$, $(b_{2n})_{n\ge 2}$:
$$a_{2n}=q^{2n}[c_{2n-2}(\tau_2)+\sum_{m\ge 2}b_{2m}\cdot {2m+2n-4\choose 2n-2}\frac{c_{2m+2n-4}(\tau_2)}{2m-1}],$$
$$b_{2n}=q^{2n-2}\sum_{m\ge 2}a_{2m}\cdot {2m+2n-4\choose 2n-2}\frac{c_{2m+2n-4}(\tau_1)}{2m-1}, \text{ for } n\ge 2,$$
with the initial condition $a_2=0$.
This recursion has a unique solution, which proves our claim.

For example, we have
$$a_4\equiv c_2(\tau_2)q^4+4c_4(\tau_1)c_2(\tau_2)c_4(\tau_2)q^{10} \mod q^{12}, \ a_6\equiv c_4(\tau_2)q^6 \mod q^{12}, \ 
a_8\equiv c_6(\tau_2)q^8 \mod q^{12},$$
$$b_4\equiv 2c_4(\tau_1)c_2(\tau_2)q^6+ 3c_6(\tau_1)c_4(\tau_2)q^8 \mod q^{10}, \
b_6\equiv 5c_6(\tau_1)c_2(\tau_2)q^8 \mod q^{10},$$
while all higher $a_{2n}$ (resp., $b_{2n}$) are zero modulo $q^{12}$ (resp., $q^{10}$).

The map $\Pi$ sends the differential $(dx_1,0)$ to the class
$$([1-\sum_{n\ge 2}a_{2n}(q)\frac{c_{2n-2}(\tau_1)}{2n-1}]dx_1,-q\om[-2]+q[\sum_{n\ge 2}b_{2n}(q)\frac{c_{2n-2}(\tau_2)}{2n-1}]dx_2)$$
in $(H^1(C_1,\C)\oplus H^1(C_2,\C))\ot \C[q]/(q^{N+1})$ (the formula for $\Pi(0,dx_2)$ is obtained by switching the roles of $C_1$ and $C_2$).

Thus, modulo $q^{11}$ we have
\begin{align*}
&\Pi(dx_1,0)\equiv \\
&([1-\frac{c_2(\tau_1)c_2(\tau_2)}{3}q^4-\frac{c_4(\tau_1)c_4(\tau_2)}{5}q^6-\frac{c_6(\tau_1)c_6(\tau_2)}{7}q^8-
\frac{4}{3}c_2(\tau_1)c_4(\tau_1)c_2(\tau_2)c_4(\tau_2)q^{10}]dx_1, \\
&-q\om[-2]+[\frac{2}{3}c_4(\tau_1)c_2(\tau_2)^2q^7+2c_6(\tau_1)c_2(\tau_2)c_4(\tau_2)q^9]dx_2).
\end{align*}

\end{document}